\documentclass[smallextended]{svjour3}       % onecolumn (second format)
\usepackage[bottom=4cm, right=4cm, left=4cm, top=4cm]{geometry}
\usepackage{latexsym}
\usepackage{amssymb}
\usepackage{amsmath}
\usepackage[mathscr]{eucal}
\usepackage{graphicx}
\usepackage{hyperref}
\usepackage{caption}
\usepackage{subcaption}
\usepackage{setspace}

\renewcommand{\qed}{\hfill{\ \ \rule{2mm}{2mm}} \vspace{0.2in}}

\newcommand{\ind}{1\hspace{-2.3mm}{1}}

\renewcommand{\thefigure}{\arabic{figure}}
\begin{document}

\title{Extremal Spanning Trees of Random Marked Graphs with Independent  Edge Weights}
\titlerunning{Spanning Trees of  Weighted  Random Graphs}

\author{ \textbf{Ghurumuruhan Ganesan}}
\authorrunning{G. Ganesan}
\institute{IISER Bhopal,\\
\email{gganesan82@gmail.com }}

\date{}
\maketitle

%\doublespacing

\begin{abstract}
In this paper, we consider a Bernoulli random graph~\(G\)  on~\(n\) vertices with non-uniform edge probabilities, where each vertex has an independent mark and each edge is equipped with an independent positive weight. The cost of an edge depends on the weight as well the marks of the endvertices and  we estimate the growth  of the maximum and minimum  cost of a spanning tree containing all the vertices.  For edge weights with heavy tails and vertex marks distributed uniformly in the unit square, we obtain a phase transition in terms of the tail decay exponent~\(s:\) If~\(s\) is large, then the maximum cost is essentially determined by the vertex locations and if~\(s\) is small, then the edge weights crucially influence the maximum cost. We derive a similar result for minimum cost spanning trees and  use martingale difference based methods to establish the~\(L^2-\)convergence of the extremal cost, appropriately scaled and centred.

\vspace{0.1in} \noindent \textbf{Key words:} Extremal cost spanning trees; Marked Bernoulli random graphs; Independent edge weights; phase transition.

\vspace{0.1in} \noindent \textbf{AMS 2000 Subject Classification:} Primary: 60D05, 60C05.
\end{abstract}

\bigskip

\renewcommand{\theequation}{\thesection.\arabic{equation}}
\setcounter{equation}{0}
\section{Introduction} \label{intro}
The study of the minimum weight spanning trees of a graph is of great practical importance
and many algorithms have been proposed over the years
for various kinds of graphs. For example, the well-known Kruskal's algorithm~\cite{cormen}
iteratively adds edges to a sequence of increasing subtree of the original graph
until a spanning tree is obtained with the constraint that no cycle is created
in any of the iterations. The spanning tree with minimum weight so obtained
is usually called the \emph{Minimum Spanning Tree} (MST).

Minimum spanning trees (MSTs) of complete graphs with \emph{random} edge weights are important from both theoretical and practical perspectives.
For independent and identically distributed (i.i.d.) edge weights with a common cumulative distribution function (cdf)~\(F(.)\) that varies linearly close to zero, Frieze~\cite{fre} studied convergence weight of the MST of the complete graph~\(K_n\) on~\(n\) vertices. Later~\cite{ald} studied convergence in the mean for the MST weight, when the edge weight distributions follow a power law distribution. Janson~\cite{janson} studied central limit theorems for a scaled and centred version of~\(MST_n\) and more recently Addario-Berry et al~\cite{add} studied bounds on the diameter of the MST. The methods involve a combination of graph evolution via Kruskal's agorithm along with a component analysis of random graphs. For MSTs with nonidentical edge weight distributions, Li and Zhang~\cite{li} use the Tutte polynomial approach~\cite{steele3} to compute expressions for the expected value of~\(MST_n.\)

MSTs of Euclidean random graphs whose nodes are randomly distributed in the unit square and whose edges are assigned weights
related to the Euclidean length, have also been extensively studied. When the weight of an edge equals its Euclidean length raised to a positive power,
we refer to the resulting MSTs as power weighted Euclidean MSTs. One of the main objects of interest in the study of power weighted Euclidean MST is its total weight: How does it scale with the number of nodes and the power weight exponent and what are its convergence properties?
Analytical results for such MSTs  have been studied extensively before (see~\cite{steele}~\cite{steele2}~\cite{kest_lee}~\cite{pen_yuk} and references therein). For example,~\cite{steele}  uses edge counting techniques to obtain variance estimates for the MST weight and~\cite{kest_lee}  use martingale methods to obtain central limit theorems (CLTs) for the MST weight, appropriately scaled and centred. In~\cite{pen_yuk} coupling arguments are used to obtain weak laws for functionals of point processes thereby including the MST as a special case. Recently~\cite{chat}  used percolation theoretic arguments to study convergence rate of the CLTs for Euclidean MSTs.

In this paper, we study maximum \emph{and} minimum cost of spanning trees of graphs obtained by placing random \emph{independent} weights in each edge of~\(K_n\) and define the \emph{cost} of an  edge as a function of both its length and the weight. Such scenarios arise often in the study of wireless networks, where nodes are terminals, edges represent communication links between terminals and the edge weight could either be a  gain (like for e.g. fading) or loss (e.g., packet delay) associated with the link~\cite{goldsmith}.

For edge weights with heavy tails and vertex marks distributed uniformly in the unit square, we obtain a phase transition in terms of the tail decay exponent~\(s.\) We show that if~\(s\) is large, then the maximum cost grows with  the Euclidean distance exponent and if~\(s\) is small, then the edge weights inverse cumulative distribution function determines the maximum cost. We derive a similar result for minimum cost spanning trees and  use martingale difference based methods to establish the~\(L^2-\)convergence of the extremal cost, appropriately scaled and centred.

%One main aspect in the above described situation is the following: A crucial property that facilitates the study of the deviation properties in random Euclidean MSTs without weights, is the fact that the maximum degree of any vertex cannot be more than~\(6,\) purely by geometrical considerations. This allows for the usage of martingale difference methods to estimate variance and related properties (see for example~\cite{ald2}). This may no longer be directly applicable when the edge weights are independent and  we use stochastic domination, martingale based methods and a ``ray segmentation" technique to bound the variance of~\(MST_n.\)

The paper is organized as follows:  In the first subsection of Section~\ref{sec_main_res}, we state our main results regarding the maximum cost spanning trees (MASTs) (Theorems~\ref{thm_max_cst_low},~\ref{thm_max_cst_up} and~\ref{thm_spat_mast}) and then illustrate the bounds using examples: Corollary~\ref{cor_repeat_mast} for the terminals with repeaters problem and Corollary~\ref{cor_example_two} for the spatial MASTs. In the next subsection, we then state our main results regarding the minimum cost spanning trees (MSTs) (Theorems~\ref{thm_min_cst_weak},~\ref{thm_min_cst_strong} and~\ref{thm_spat_mst}). As before, we illustrate the bounds using the terminals with repeaters problem (Corollary~\ref{cor_mst_repeat}) and spatial MSTs (Corollary~\ref{cor_example_two_mst}).

In Sections~\ref{sec_pf_max_low},~\ref{sec_pf_max_up} and~\ref{sec_pf_mast_spat_thm}, we prove Theorems~\ref{thm_max_cst_low},~\ref{thm_max_cst_up} and~\ref{thm_spat_mast}, respectively and then establish Corollaries~\ref{cor_repeat_mast} and~\ref{cor_example_two} in Section~\ref{sec_pf_mast_cor}. Similarly,  in Sections~\ref{sec_pf_min_weak},~\ref{sec_pf_min_strong} and~\ref{sec_pf_thm_spat_mst}, we prove Theorems~\ref{thm_min_cst_weak},~\ref{thm_min_cst_strong} and~\ref{thm_spat_mst}, respectively and then finally derive Corollaries~\ref{cor_mst_repeat} and~\ref{cor_example_two_mst} in Section~\ref{sec_pf_cor_mst}.

%Section~\ref{mst_wt_euc}, we state and prove our main result regarding deviation estimates for the minimum cost of spanning trees of Euclidean random graphs equipped with independent edge weights and in Section~\ref{mst_wt_conv}, we establish~\(L^2-\)convergence of the minimum cost, appropriately scaled and centred.

\renewcommand{\theequation}{\thesection.\arabic{equation}}
\setcounter{equation}{0}
\section{Main Results}\label{sec_main_res}
In this section, we describe our main results regarding the maximum and minimum cost of spanning trees of randomly marked graphs with independent edge weights.

We begin with  problem motivation. Consider~\(n\) terminals labelled~\(a_1,a_2,\ldots,a_n,\) placed at deterministic locations in the unit square on the plane, each capable of forming communication links with other terminals. Due to external factors like shadowing~\cite{goldsmith}, interference etc., a communication link between~\(a_i\) and~\(a_j\) is subject to failure with a certain probability, independent of other links. In addition, each successful link undergoes fading~\cite{goldsmith} (independent of other links) that affects the throughput, i.e., the amount of information that can be sent through the link.

To counter this,  terminals are equipped with  repeaters in order to provide information signal boosting capabilities. Any terminal~\(a_i\) has a repeater with a small probability~\(\epsilon_0\) independent of other terminals and a communication link between~\(a_i\) and~\(a_j\) said to be of \emph{high quality} if at least one of~\(a_i\) or~\(a_j\) has a repeater. High quality links have high throughput and low quality links have throughput close to zero. It is of interest to estimate the maximum possible throughput of a fully connected network using minimum number of links and also determine conditions under which near ``optimal" throughput could be achieved. In what follows, we first generalize the above description and estimate the maximum weight of spanning trees in randomly marked graphs and then, as a direct consequence, derive the results relevant to the terminal throughput problem.

\subsection{\bf Maximum Cost Spanning Trees}
Let~\(K_n\) be the complete graph with vertex set~\(\{1,2,\ldots,n\}\) and let~\(\{Z(h)\}_{h \in K_n}\) be independent Bernoulli random variables indexed by the edge set of~\(K_n\) and having distribution
\begin{equation}\label{x_dist}
\mathbb{P}(Z(h)=1) = p(h) = 1-\mathbb{P}(Z(h)=0),
\end{equation}
where~\(0 < p(h) < 1.\)

If the edge~\(h= (u,v)\) has endvertices~\(u\) and~\(v,\) then we define~\(p(h) = p(u,v)\) to be the edge probability and denote~\(Z(h) = Z(u,v)\) to be the \emph{state} of the edge~\(h.\) Let~\(G \subset K_n\) be the random graph formed by the set of all edges~\(h\) satisfying~\(Z(h) =1.\) If~\(p(h) = p\) for all edges~\(h \in K_n,\) then we say that~\(G\) is~\(p-\)\emph{homogenous} or homogenous with edge probability~\(p.\) Else we refer to~\(G\) as an inhomogenous random graph.

We equip each edge~\(h \in K_n\) with a positive random weight~\(W(h) = W(u,v)\) that is independent of the edge states~\(\{Z(e)\}_{e \in K_n}.\) The random variables~\(\{W(h)\}_{h \in K_n}\) are independent and identically distributed (i.i.d.).  Let~\(\{X_i\}_{1 \leq i \leq n}\) be i.i.d.\ (that are also independent of the edge states and weights) elements belonging to some set~\(\Omega_{mk}.\)  We define~\(X_u\) to be the \emph{random mark} associated with the vertex~\(u.\)

Letting~\(r : S \times S \rightarrow (0,\infty)\) be a deterministic measurable function, we define the \emph{cost} of the edge~\(h = (u,v)\) with endvertices~\(u\) and~\(v\) as
\begin{equation}\label{cost_def}
c(h)  = c(u,v) := r(X_u,X_v) \cdot W(u,v).
\end{equation}
The term~\(r(X_u,X_v)\) is denoted as the \emph{cost factor} and we define the complementary cumulative distribution function (ccdf)~\(F_c^{(ct)}\) of the edge cost~\(c(h)\) as~\[F_c^{(ct)}(x) := \mathbb{P}(c(h) > x) \] for~\(x >0.\)  For~\(y > 1,\)  we also define
\begin{equation}\label{h_c_def}
J_c(y) := \max\left\{x > 0: F^{(ct)}_c(x) \geq \frac{1}{y}\right\},
\end{equation}
to be the inverse edge cost ccdf. Similarly, we let~\(F_c(x)\) and~\(H_c(y)\) denote the edge \emph{weight} ccdf and inverse ccdf, respectively.

A component of~\(G\) is a maximal connected subgraph of~\(G\) and we say that~\(G\) is connected if~\(G\) contains a single component. A connected acyclic subgraph of~\(G\) is called a tree and we say that a tree~\({\cal T}\) is a \emph{spanning} tree of a component~\({\cal C} \subset G\) if~\({\cal T}\) contains all vertices of~\({\cal C}.\) We define the cost of a tree~\({\cal T} \subset G\) to be
\begin{equation}\label{tree_cost}
c({\cal T}) := \sum_{h \in {\cal T}}c(h),
\end{equation}
the sum of the costs of edges of~\({\cal T}.\) Let~\(\chi_n\) denote the maximum cost   of a spanning tree of the largest component of~\(G.\)

%CHEEE!! we have the following result.  Throughout constants do not depend on~\(n.\)

Denoting~\(E_{con}\) to be the event that~\(G\) is connected, we have the following bounds for~\(\chi_n.\) Throughout constants do not depend on~\(n\) and for two sequences~\(\{a_n\}\) and~\(\{b_n\},\) we use the notation~\(a_n = o(b_n)\) to denote that~\(\frac{a_n}{b_n} \longrightarrow 0\) as~\(n \rightarrow \infty.\)
\begin{theorem}\label{thm_max_cst_low} Suppose the following hold:\\
\((i)\) There are constants~\(a_0,b_0> 0,0  < \gamma_0 < \frac{1}{2}\) and~\(0 < p = p(n) < 1\) such that
\begin{equation}\label{p_cond_new}
a_{0} np \leq \sum_{v \in {\cal S}} p(u,v) \leq \sum_{v \neq u} p(u,v) \leq b_{0} np
\end{equation}
for all~\(u\) and all sets~\({\cal S}\) containing at least~\(\gamma_0 n\) vertices.\\
\((ii)\) The edge weight ccdf~\(F_c(x)\) is continuous for all large~\(x\) and the edge cost factor satisfies
\begin{equation}\label{cost_fact_cond}
\delta_{low} \leq \left(\mathbb{E}r(X_1,X_2)\right)^2 \leq \mathbb{E}r^2(X_1,X_2) \leq \delta_{up}
\end{equation}
for some constants~\(\delta_{low},\delta_{up} > 0.\)\\
There is a constant~\(\lambda > 0\) such that if~\(p \geq \frac{\lambda \log{n}}{n},\) then
\begin{equation}\label{dev_bound_mast_low}
\mathbb{P}\left(E_{con} \bigcap \left\{\chi_n \geq \lambda^{-1} nH_c(np) \right\} \right) \geq 1- \frac{\lambda}{n}.
\end{equation}
\end{theorem}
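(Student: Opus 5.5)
We prove the bound by building, inside the event that \(G\) is connected, a spanning tree of cost at least a constant times \(n H_c(np)\). The tree has two parts. Each vertex of a large set receives one ``heavy'' edge, of weight of order \(H_c(np)\) and cost factor of order one. The remaining vertices are then attached by arbitrary edges, which only add non-negative cost.

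\textbf{Step 1: connectivity.} Condition \((i)\) gives every vertex expected degree at least \(a_0 np \geq a_0\lambda \log n\). Moreover, every set of at least \(\gamma_0 n\) vertices receives mass at least \(a_0 np\) from each vertex. A standard union bound over cuts \((A, A^c)\) with \(|A| \leq n/2\) then shows \(\mathbb{P}(E_{con}^c) \leq \lambda/(2n)\) for \(\lambda\) large:
\begin{itemize}
\item for small \(|A|\), isolated-type bounds \(\prod e^{-a_0 np |A|/2}\) apply;
\item for \(|A| \geq \gamma_0 n\), we use the sum over \(\mathcal{S} = A^c\), which has size \(\geq n/2 \geq \gamma_0 n\).
\end{itemize}
The same computation shows that with high probability every vertex has degree at least \(a_0 np/2\).

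\textbf{Step 2: heavy edges at a single vertex.} Fix a vertex \(u\). Only neighbours \(v\) with \(r(X_u, X_v)\) not too small are useful. By \eqref{cost_fact_cond} and Paley--Zygmund, \(\mathbb{P}(r(X_1, X_2) \geq \tfrac12 \mathbb{E} r) \geq \delta_{low}/(4\delta_{up}) =: q_0 > 0\). Since the pair law is not a product in general, we first condition on \(X_u\). The set \(\{X_u : \mathbb{P}(r(X_u, X_2) \geq \tfrac12 \sqrt{\delta_{low}} \mid X_u) \geq q_0/2\}\) has probability at least \(q_0/2\). So a positive fraction of vertices are ``good'', and with probability \(1 - e^{-cn}\) there are at least \(c n\) good vertices.

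For a good \(u\), with high probability a constant fraction of its \(\geq a_0 np/2\) neighbours (all edges being independent of the marks) have cost factor at least \(\tfrac12\sqrt{\delta_{low}}\). Among these \(\geq c_1 np\) edges with i.i.d.\ weights, continuity of \(F_c\) and the definition of \(H_c\) give
\[
\mathbb{P}\bigl(\text{no weight exceeds } H_c(c_2 np)\bigr) \leq \Bigl(1 - \tfrac{1}{c_2 np}\Bigr)^{c_1 np} \leq e^{-c_1/c_2}.
\]
This probability is a constant less than one, not a quantity that is small in \(n\). Hence a positive fraction of the good vertices possess such a heavy edge. That fraction is obtained with exponentially small failure probability by a bounded-differences/Chernoff argument, since the indicators for distinct \(u\) depend on nearly disjoint edge sets.

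We also need to pass from \(H_c(c_2 np)\) to \(H_c(np)\), using monotonicity of \(H_c\). Choosing \(c_2 \leq 1\) gives \(H_c(c_2 np) \leq H_c(np)\), which is the wrong direction. So instead we take the maximum over \(\geq c_1 np\) edges and compare with threshold \(H_c(np)\) itself. The probability of exceeding it is \(1 - (1 - 1/(np))^{c_1 np} \geq 1 - e^{-c_1} > 0\), so threshold \(H_c(np)\) directly works.

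\textbf{Step 3: assembling the tree.} From the set \(U\) of at least \(c_3 n\) vertices \(u\) each having a heavy edge \(e_u\), we extract a subforest. Distinct \(u\) may choose the same edge, or choices may form cycles. Orienting each \(e_u\) out of \(u\) gives a functional graph on \(U\), whose only cycles come from mutual choices (2-cycles) and longer cycles. Dropping one edge per cycle leaves at least \(|U|/2\) distinct edges forming a forest. On \(E_{con}\), this forest extends to a spanning tree of \(G\), whose cost is at least \(\tfrac{c_3 n}{2}\cdot \tfrac12 \sqrt{\delta_{low}}\, H_c(np)\). Taking \(\lambda\) large gives \eqref{dev_bound_mast_low}, since all failure probabilities are \(O(1/n)\).

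The main obstacle is Step 2's concentration. The heavy-edge indicators for different vertices share edges and marks, so I would expose the marks first and then apply a martingale (McDiarmid) bound over the edges incident to \(U\), each of which affects at most two indicators.
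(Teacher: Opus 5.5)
The gap is at the step you flag as the main obstacle: plain McDiarmid does not give the concentration you need.

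Let $I_u$ be the indicator that a good vertex $u$ has a present, useful edge of weight $>H_c(np)$, and let $S=\sum_u I_u$. Given the marks, $S$ is a function of the $\Theta(n^2)$ edge variables $(Z(e),W(e))$ with $e$ incident to a good vertex. After also exposing the states, it is a function of the $\Theta(n^2p)$ weights of present useful edges. Each such variable changes $S$ by at most $2$. McDiarmid then yields $\mathbb{P}(S\le\mathbb{E}S-t)\le\exp\left(-2t^2/\sum_e c_e^2\right)$. At the deviation you need, $t=\Theta(n)$, this is $\exp(-\Theta(1))$, respectively $\exp(-\Theta(1/p))$. That is not $O(1/n)$, and not even $o(1)$ when $p$ stays bounded away from $0$, which hypothesis $(i)$ allows. ``Nearly disjoint edge sets'' does not justify a Chernoff bound either, because each $I_u$ is correlated with the indicators of all its $\Theta(np)$ neighbours.

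The rest of your argument is correct: the Paley--Zygmund/good-vertex step, the threshold $H_c(np)$ with per-vertex success probability at least $1-e^{-c_1}$, and the functional-graph argument giving at least $|U|/2$ distinct edges that form a forest.

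The repair is easy, and there are two ways to do it. First option: split the vertices into halves $V_1\cup V_2$, let only good $u\in V_1$ compete, and only through edges into $V_2$. By $(i)$ with ${\cal S}=V_2$ (which has at least $\gamma_0 n$ vertices since $\gamma_0<\frac12$), each such $u$ still has $\Theta(np)$ neighbours in $V_2$ with high probability. Given the marks, the indicators $\{I_u\}_{u\in V_1}$ depend on disjoint sets of edge variables, so they are independent and Chernoff gives failure probability $e^{-cn}$. As a bonus, the selected edges form stars centred in $V_2$, i.e.\ a forest of $|U|$ distinct edges, so your cycle-dropping is not needed.

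Second option: a second-moment bound already gives the rate in the statement. Given marks and states, $I_u$ and $I_{u'}$ share only the weight $W(u,u')$. Write $I_u=\max(Y,I'_u)$ and $I_{u'}=\max(Y,I'_{u'})$, where $Y$ is the indicator that $W(u,u')>H_c(np)$, and let $y,a,b$ be the success probabilities of $Y,I'_u,I'_{u'}$. One computes $\mathrm{Cov}(I_u,I_{u'})=y(1-y)(1-a)(1-b)\le y=\frac{1}{np}$. Hence $\mathrm{var}(S)\le n+2|E(G)|/(np)=O(n)$ on the degree event, and Chebyshev gives failure probability $O(1/n)$.

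With either repair your proof is valid and takes a genuinely different route from the paper's. The paper picks heavy edges by weight alone and extracts a matching of size at least $\frac{\psi n}{2}$, using a union bound over all $\psi n$-sets of heavy-degree sums. Because this matching is independent of the marks, it applies Chebyshev to the i.i.d.\ cost factors on the matching, which is where its $\lambda/n$ comes from. You instead guarantee every selected edge a cost factor at least $\frac12\sqrt{\delta_{low}}$ and replace the matching by a forest. This avoids the matching extraction, but it makes the selection indicators dependent, which is why you need the extra concentration argument above.
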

The technical condition~(\ref{p_cond_new}) ensures connectivity of the random graph~\(G\) with high probability, i.e., with probability~\(1-o(1)\) and  the resultant lower bound~(\ref{dev_bound_mast_low}) for the maximum cost of a spanning tree, is obtained in terms of the edge weight inverse ccdf~\(H_c(.),\) under the condition~(\ref{cost_fact_cond}) that the expected cost factor of an edge is bounded from below.

Before describing examples, we also state our next result that complements the lower bound for~\(\chi_n\) obtained in Theorem~\ref{thm_max_cst_low}.
\begin{theorem}\label{thm_max_cst_up} Suppose in addition to conditions~\((i)-(ii)\) in Theorem~\ref{thm_max_cst_low}, the following also hold:\\
\((a)\) There are constants~\(C_0,x_0 > 0\) and~\(s  > 3\) such that the edge weight ccdf~\(F_c(.)\) satisfies
\begin{equation}\label{f_scale}
F_c(ax) \leq \frac{C_0}{a^{s}}  \cdot F_c(x)
\end{equation}
for all~\(a > 1\) and all~\(x  >x_0.\)\\
\((b)\) There are constants~\(c_1,c_2 > 0\) such that the edge cost ccdf~\(F_c^{(ct)}(.)\) satisfies
\begin{equation}\label{dom_cond}
F_c(x) \geq c_1F_c^{(ct)}(c_2x) \;\;\text{ for all } x.
\end{equation}
There is a constant~\(\theta > 0\) such that if~\(p \geq \frac{\theta \log{n}}{n},\) then
\begin{equation}\label{dev_bound_mast_up}
\mathbb{P}\left(\chi_n \leq \theta n H_c(np) \right) \geq 1-  \theta^{-1} \cdot p,
\end{equation}
\begin{equation}\label{dev_bound_exp_up}
\mathbb{E}\chi_n \leq \theta n H_c(np)\;\;\;\text{ and }\;\;\;var(\chi_n)\leq \theta  n^2p H_c^2(np).
\end{equation}
\end{theorem}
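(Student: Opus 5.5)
The plan is to bound $\chi_n$ by the sum of the $n-1$ largest edge costs among the edges incident to the vertices, and to control these through the degrees. Any spanning tree of the largest component has at most $n-1$ edges. If we root the tree arbitrarily, each non-root vertex $u$ contributes exactly one edge, the one to its parent, and that edge is incident to $u$. Hence
\[\chi_n \leq \sum_{u=1}^n M_u, \qquad M_u := \max_{v:\,Z(u,v)=1} c(u,v).\]
It therefore suffices to show that each $M_u$ is of order $H_c(np)$, with good tail bounds.

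\textbf{Step 1 (degree control).} By (\ref{p_cond_new}), the degree $d(u)$ of $u$ is a sum of independent Bernoullis with mean at most $b_0np$. A Chernoff bound gives $\mathbb{P}(d(u) > 2b_0np) \leq e^{-cnp} \leq n^{-3}$ once $\theta$ is large, using $p \geq \theta\log n/n$. A union bound over $u$ then shows that all degrees are at most $2b_0np$ off an event of probability $O(n^{-2})$, and $n^{-2} \leq p$ since $p \geq \log n/n$.

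\textbf{Step 2 (tail of $M_u$).} Because the weights and marks are independent of the states $Z$, we can condition on $d(u)=d \leq 2b_0 np$ and use (\ref{dom_cond}):
\[\mathbb{P}(M_u > x) \leq d\,F_c^{(ct)}(x) \leq \frac{d}{c_1}F_c(x/c_2).\]
Take $x = a c_2 H_c(np)$ with $a>1$ and apply (\ref{f_scale}), together with $F_c(H_c(np)) \leq 1/(np)$, which uses continuity of $F_c$. This gives
\[\mathbb{P}(M_u > a c_2 H_c(np)) \leq C a^{-s}.\]
So $M_u/H_c(np)$ has a tail uniformly dominated by $a^{-s}$ with $s>3$, and in particular its moments of order up to $3$ are bounded. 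This immediately yields $\mathbb{E}\chi_n \leq \sum_u \mathbb{E}M_u \leq \theta n H_c(np)$. Here the bad degree event is handled by the crude bound $M_u \leq \max_v c(u,v)$; its $L^1$ contribution is tiny because the bad event has probability $n^{-3}$, and since all $n-1$ costs have finite third moment the maximum contributes at most polynomially in $n$.

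\textbf{Step 3 (concentration).} The $M_u$ are not independent, since each edge is shared by two vertices. Still, the dependence is local: $M_u$ and $M_w$ share only the single edge $(u,w)$. For the deviation bound (\ref{dev_bound_mast_up}), one option is a second-moment argument, $\mathbb{P}(\sum_u M_u > 2\theta' nH_c(np)) \leq var(\sum_u M_u)/(n H_c(np))^2$; this needs $var \lesssim n^2 p H_c^2$.

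A cleaner route, which I would try first, is to use the variance bound itself. I would apply the Efron--Stein inequality to $\chi_n$, viewed as a function of the independent variables $\{(Z(h),W(h))\}_h$ and the marks $\{X_u\}$.
\begin{itemize}
\item Resampling one edge $h$ changes $\chi_n$ only if $h$ is, or becomes, open. The change is at most $\max(c(h),c'(h))$ plus a possible reorganisation of the tree that costs at most one further edge, which is bounded by the endpoint maxima. The expected squared change is then about $p\cdot \mathbb{E}M_u^2 \lesssim pH_c^2(np)$; summing over the $n^2$ edges gives $n^2pH_c^2(np)$.
\item Resampling a mark $X_u$ changes the costs of all edges at $u$, so it changes $\chi_n$ by at most about $\sum_{v\sim u} c(u,v)$, whose second moment is at most $(np)^2 H_c^2$ if we naively bound each term by the maximum. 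That is too big. I would instead bound it by $d(u)$ times the second moment of a single cost, $\mathbb{E}c^2 \leq \delta_{up}\mathbb{E}W^2$, using Cauchy--Schwarz conditional on the marks. Alternatively, one notes that after the resampling only the tree edges at $u$ matter, together with replacement edges, whose number is bounded by the tree degree of $u$.
\end{itemize}

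\textbf{Main obstacle.} The step I expect to be hardest is this mark-resampling term. Controlling it requires showing that the tree degree of $u$ in a maximum spanning tree has bounded moments, or else working with $\sum_u M_u$ as the comparison functional. Once $var(\chi_n)\leq\theta n^2pH_c^2$ is established, Chebyshev combined with $\mathbb{E}\chi_n\leq\theta nH_c$ gives
\[\mathbb{P}(\chi_n > 2\theta nH_c) \leq \frac{n^2pH_c^2}{n^2H_c^2} = p,\]
which is (\ref{dev_bound_mast_up}) after enlarging $\theta$.
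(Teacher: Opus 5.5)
Your Steps 1--2 and the bound on $\mathbb{E}\chi_n$ are sound. The rooting inequality $\chi_n\le\sum_u M_u$, combined with the tail $\mathbb{P}(M_u>ac_2H_c(np))\le Ca^{-s}$ on the degree event, is a legitimate alternative to the paper's counting of $j$-level edges of $G$. The gap is the mark-resampling term in Efron--Stein. Both your variance bound and, through Chebyshev, your deviation bound (\ref{dev_bound_mast_up}) depend on it. The fix you propose there is false. The costs are positive with $\mathbb{E}c(h)=\mathbb{E}r(X_1,X_2)\,\mathbb{E}W(h)>0$, so conditionally on the edge states $\mathbb{E}\bigl(\sum_{v\sim u}c(u,v)\bigr)^2\geq(d(u)\,\mathbb{E}c(h))^2\asymp(np)^2$, not $d(u)\,\mathbb{E}c^2$. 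Summed over the $n$ marks this is of order $n^3p^2$. But (\ref{f_scale}) forces $F_c(x)\le Cx^{-s}$, hence $H_c^2(np)\le C(np)^{2/s}=o(np)$ and $n^3p^2\gg n^2pH_c^2(np)$. So any bound that sums over all graph neighbours of $u$ is too lossy. You then leave the tree-degree route as an open "main obstacle", so the variance bound is not established.

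The missing idea is that you need no control of the moments of the tree degree.
\begin{itemize}
\item Compare with vertex removal. Since $\chi_{rem}(u)$ does not involve $X_u$, you have $\mathbb{E}(\chi_n-\chi_{mod}(u))^2\le 4\mathbb{E}(\chi_n-\chi_{rem}(u))^2$.
\item On the event that $G$ and $G(\{u\})$ are connected, $0\le\chi_n-\chi_{rem}(u)\le\sum_{v\sim_{{\cal T}}u}c(u,v)$. Here the sum runs only over the neighbours of $u$ in the maximum tree ${\cal T}_n$: delete $u$ from ${\cal T}_n$ and reconnect the pieces inside $G(\{u\})$ with edges of positive cost.
\item By Cauchy--Schwarz the square is at most $d_{{\cal T}}(u)\sum_{v\sim_{{\cal T}}u}c^2(u,v)$. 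Bound only the single factor $d_{{\cal T}}(u)\le d(u)\le 2b_0np$ crudely.
\item Summing over $u$ gives at most $4b_0np\,\rho_n$, where $\rho_n=\sum_{e\in{\cal T}_n}c^2(e)$ is a sum over the $n-1$ tree edges only.
\item In your own framework the rooting argument gives $\rho_n\le\sum_w M_w^2$. Step 2 gives $\mathbb{E}M_w^2\mathbf{1}(d(w)\le 2b_0np)=O(H_c^2(np))$, so the mark contribution is $O(n^2pH_c^2(np))$. This is the paper's estimate (\ref{tal_tum}).
\item On the exponentially unlikely event where the connectivity and degree properties fail (the paper's $E_{good}$ and $E_{nice}$), use the routine crude bound $\chi_n\le\sum_{f\in K_n}c(f)$.
\end{itemize}
This closes the variance bound, and Chebyshev then gives (\ref{dev_bound_mast_up}).
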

The condition~\((a)\) is a scaling condition that determines the tail behaviour of the edge weight ccdf. Below, we show through examples that common distributions like power law and exponential decay satisfy~(\ref{f_scale}).  The bounds~(\ref{dev_bound_mast_up}) and~(\ref{dev_bound_exp_up}) demonstrate that the lower bound for~\(\chi_n\) obtained in Theorem~\ref{thm_max_cst_low} is the best possible, provided the edge weight ``dominates" the cost factor in the sense of~(\ref{dom_cond}).

Theorems~\ref{thm_max_cst_low}-\ref{thm_max_cst_up} describe conditions under which the  maximum cost that is influenced by the tail of the edge weight ccdf via the inverse ccdf. We now apply these bounds to the terminals with repeaters problem discussed at the beginning of the section. \\
\emph{\underline{Example 1} (Terminals with repeaters)}: Assume that the marks~\(\{X_i\}_{1 \leq i \leq n}\) are i.i.d.\ with distribution
\begin{equation}\label{faap}
\mathbb{P}(X_i = 0) = g_n = 1-\mathbb{P}(X_i=1)
\end{equation}
for some deterministic sequence~\(0 < g_n \leq 1.\) Also assume that the cost factor~\(r(X_i,X_j)\) satisfies
\begin{equation}\label{cost_def_obi}
r(X_i,X_j) = \left\{
\begin{array}{ll}
1, & \text{ if } X_iX_j  = 0\\
&\\
h_n, & \text{ otherwise},
\end{array}
\right.
\end{equation}
where~\(h_n \) is a deterministic sequence.

We interpret~\(X_i = 0\) to denote that terminal~\(a_i\) is equipped with a repeater and~\(X_i=1\) otherwise. Similarly, the cost factor~\(r(X_i,X_j)\) models the \emph{attenuation} experienced by the information signal passing through the link between~\(a_i\) and~\(a_j.\) If at least one of the terminals~\(a_i\) or~\(a_j\) is equipped with a repeater, then there is no attenuation~(\(r(X_i,X_j) =1\)) and if neither of the terminals  have a repeater, then the attenuation is severe and essentially close to zero; hence we assume that~\(h_n = o(1).\)  The weight~\(W(h)\) of the edge~\(h = (u,v)\) models the \emph{fading  gain} experienced by the communication link between terminals~\(a_u\) and~\(a_v\) and we assume that the fading is Rayleigh~\cite{goldsmith}  so that the edge weighs are exponentially distributed. Finally,~\(p(u,v)\) is the probability of a successful link between terminals~\(u\) and~\(v,\) determined by other extraneous factors, like shadowing, scattering etc. For more details on these phenomena, we refer to Chapter~\(7\) in~\cite{goldsmith}.

Suppose for now that the above communication network is connected and we choose a \emph{deterministic} spanning tree  with~\(n-1\) links, each experiencing i.i.d.\ fading.  Even if no link undergoes attenuation (i.e., we install a repeater at each terminal),  the law of large numbers implies that the average gain per link is bounded and so, with high  probability, i.e., with probability~\(1-o(1),\) the throughput in the resulting network is~\(O(n),\) where we use the notation~\(a_n = O(b_n)\) to denote that~\(a_n \leq Cb_n\) for some constant~\(C > 0\) and all~\(n\) large.

The following result estimates the \emph{maximum} possible throughput of a general communication network undergoing  Rayleigh fading.
\begin{corollary}\label{cor_repeat_mast} Suppose~\[g_n = \epsilon_0,\;\; h_n = o(1)\;\;\text{ and }\;\;p(u,v) = p \geq \frac{1}{n^{\beta}}\] for all edges~\((u,v)\) and some constants~\(0 < \beta,\epsilon_0 < 1.\) If the edge weights~\(\{W(h)\}_{h \in K_n}\) are i.i.d.\ exponential with unit mean, then there are constants~\(\theta_1,\theta_2 > 0\) such that
\begin{equation}\label{faap_gen}
\mathbb{P}\left(E_{con} \bigcap \{\theta_1 n\log{n} \leq \chi_n \leq \theta_2 n\log{n}\}\right) = 1-o(1),
\end{equation}
and~\[\theta_1 n \log{n} \leq \mathbb{E}\chi_n \leq \theta_2 n \log{n}.\] Moreover~\(\frac{\chi_n}{\mathbb{E}\chi_n} \longrightarrow 1\) in~\(L^2\) as~\(n \rightarrow \infty.\)
\end{corollary}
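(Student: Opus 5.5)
We derive Corollary~\ref{cor_repeat_mast} from Theorems~\ref{thm_max_cst_low} and~\ref{thm_max_cst_up}. To do so we verify their hypotheses in the repeater setting, compute \(H_c(np)\) for unit exponential weights, and then turn the variance bound into \(L^2\)-convergence.

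\textbf{Hypotheses.} Since \(p(u,v)=p\) for every edge, condition~(\ref{p_cond_new}) holds with \(a_0=\gamma_0\) (any fixed \(\gamma_0<\frac{1}{2}\)) and \(b_0=1\). The assumption \(p\ge n^{-\beta}\) with \(\beta<1\) gives \(p\ge \max(\lambda,\theta)\frac{\log n}{n}\) for all large \(n\).

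The exponential ccdf \(F_c(x)=e^{-x}\) is continuous. For the scaling condition~(\ref{f_scale}), take \(a>1\) and \(x>x_0\). Then \(F_c(ax)/F_c(x)=e^{-(a-1)x}\le e^{-(a-1)x_0}\). This is at most \(C_0a^{-s}\) for, say, \(s=4\): for \(a\in(1,2]\) the bound \(C_0\ge 2^4\) suffices, and for \(a>2\) one uses \(e^{-(a-1)x_0}\le C a^{-4}\) uniformly.

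The cost factor takes only the values \(1\) and \(h_n\), so \(r\le 1\) for large \(n\). Moreover \(r=1\) with probability at least \(1-(1-\epsilon_0)^2\ge\epsilon_0\). Hence \((\mathbb{E}r)^2\ge\epsilon_0^2\) and \(\mathbb{E}r^2\le 1\), which is~(\ref{cost_fact_cond}). Since \(r\le 1\), we have \(F_c^{(ct)}(x)=\mathbb{P}(rW>x)\le \mathbb{P}(W>x)=F_c(x)\), so~(\ref{dom_cond}) holds with \(c_1=c_2=1\).

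\textbf{Bounds on \(\chi_n\) and \(\mathbb{E}\chi_n\).} For the exponential law, \(H_c(y)=\log y\). Therefore \(H_c(np)=\log(np)\), and since \(n^{1-\beta}\le np\le n\) we get \((1-\beta)\log n\le H_c(np)\le \log n\).

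Theorem~\ref{thm_max_cst_low} gives \(\chi_n\ge \lambda^{-1}(1-\beta)n\log n\) on \(E_{con}\), with probability at least \(1-\lambda/n\). Theorem~\ref{thm_max_cst_up} gives \(\chi_n\le\theta n\log n\) with probability at least \(1-\theta^{-1}p\). This is the one delicate point: the failure probability \(\theta^{-1}p\) is \(o(1)\) only when \(p=o(1)\). If \(p\) is bounded away from zero, I would handle the regime \(p\ge\) const directly. Every spanning tree has cost at most \((n-1)\max_h W(h)\). Each weight exceeds \(3\log n\) with probability \(n^{-3}\), and there are fewer than \(n^2\) edges, so a union bound gives \(\max_h W(h)\le 3\log n\) with probability at least \(1-1/n\). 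Hence \(\chi_n\le 3n\log n\) with probability \(1-o(1)\), which settles the upper bound in every regime. Combining the two bounds yields~(\ref{faap_gen}).

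For the expectation, the upper bound \(\mathbb{E}\chi_n\le\theta n\log n\) is~(\ref{dev_bound_exp_up}). The lower bound follows from \(\chi_n\ge0\): \(\mathbb{E}\chi_n\ge \lambda^{-1}(1-\beta)n\log n\cdot(1-\lambda/n)\).

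\textbf{\(L^2\)-convergence.} By~(\ref{dev_bound_exp_up}), \(\operatorname{var}(\chi_n)\le\theta n^2pH_c^2(np)\le\theta n^2p\log^2 n\). Dividing by the lower bound on \((\mathbb{E}\chi_n)^2\) gives
\[
\mathbb{E}\left(\frac{\chi_n}{\mathbb{E}\chi_n}-1\right)^2=\frac{\operatorname{var}(\chi_n)}{(\mathbb{E}\chi_n)^2}\le Cp,
\]
which tends to zero when \(p\to0\). When \(p\) stays bounded away from zero, this bound does not suffice. In that regime I would instead apply an Efron--Stein or bounded-differences argument directly. Resampling a single edge weight \(W(h)\) or edge state \(Z(h)\) changes \(\chi_n\) by at most \(O(\max_h W(h))\) plus the resampled weight. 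Summing over the \(\binom{n}{2}\) edges then bounds the variance by \(O(n^2\log^2 n)\) only, which is again of the same order as \((\mathbb{E}\chi_n)^2\). Refining this, possibly by exploiting that only the \(n-1\) tree edges matter in the variance terms, is the step I expect to be the main obstacle. My first attempt would be to read the constant in~(\ref{dev_bound_exp_up}) as permitting \(p\) bounded away from \(1\), or to check from the proof of Theorem~\ref{thm_max_cst_up} that the variance bound actually holds in the form \(o(n^2\log^2 n)\).
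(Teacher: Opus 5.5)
Your argument is essentially the paper's. You verify (\ref{p_cond_new}), (\ref{cost_fact_cond}), (\ref{f_scale}) and (\ref{dom_cond}), use $H_c(y)=\log y$ with $(1-\beta)\log n\le\log(np)\le\log n$, and bound $\mathbb{E}(\chi_n/\mathbb{E}\chi_n-1)^2\le Cp$. Your check of (\ref{f_scale}), with a constant uniform in $a$, is cleaner than the paper's.

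The restriction you flag is shared by the paper. Its proof ends with ``provided $p=o(1)$'', and it also tacitly needs $p=o(1)$ for the failure probability $\theta^{-1}p$ in (\ref{dev_bound_mast_up}) to vanish. Your bound $\chi_n\le(n-1)\max_h W(h)$ removes that second dependence. Since $\mathbb{E}\max_h W(h)\le\log\binom{n}{2}+1$, the same bound also yields the expectation upper bound. So Theorem~\ref{thm_max_cst_up} is really needed only for the variance.

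For the $L^2$ claim with $p$ bounded away from zero, neither of your fallbacks works, and the paper supplies nothing more.
\begin{itemize}
\item Reinterpreting the constant in (\ref{dev_bound_exp_up}) only gives a bounded ratio $\operatorname{var}(\chi_n)/(\mathbb{E}\chi_n)^2$, not a vanishing one.
\item The proof of Lemma~\ref{lemma_max_cst_up} does not hide an $o(n^2\log^2 n)$ bound. The edge-resampling term you worry about is already $mI_{wt}=O(n\log^2 n)$ there, since only edges of ${\cal T}_n$ contribute.
\item The real bottleneck is the mark-resampling term. The paper controls it by replacing the tree degree $d_{{\cal T}}(1)$ with the graph degree $2B_0np$, which gives $nI_{loc}=O(n^2p\log^2 n)$. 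When $p\asymp1$ this is of the same order as $(\mathbb{E}\chi_n)^2$.
\end{itemize}
Closing that case would need a new ingredient, for example that the maximum degree of the maximum cost spanning tree is $o(n)$ with probability $1-o(1)$. Without it, the $L^2$ part should carry the hypothesis $p=o(1)$, as the paper's proof effectively assumes.
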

In words, if a small fraction of the terminals are installed with repeaters, then the maximum throughput attainable from a minimally connected network, is  of the order of~\(n\log{n}\) with high probability. i.e., with probability~\(1-o(1).\)   Moreover, this is the best possible and sharp in the sense that the maximum throughput is also concentrated around its expected value, with high probability. In this setup, we could interpret~\(\log{n}\) as the \emph{throughput gain} obtained due to Rayleigh fading.

Theorems~\ref{thm_max_cst_low}-\ref{thm_max_cst_up} evaluated the maximum cost~\(\chi_n\) under conditions where the edge weight primarily influenced the overall cost. Our final result considers spatial spanning trees and  complements Theorems~\ref{thm_max_cst_low}-\ref{thm_max_cst_up}  by describing sufficient conditions under which~\(\chi_n\) is essentially determined by the edge cost factor, rather than the edge weight.
\subsection*{\em Spatial MASTs}
Let~\(\{X_i\}_{1 \leq i \leq n}\) be i.i.d.\  with a common density~\(f(.)\) in the unit square~\(S = [0,1]^2\) satisfying
\begin{equation}\label{f_eq}
\epsilon_1 \leq f(x) \leq \epsilon_2
\end{equation}
for all~\(x \in S\) and some finite positive constants~\(\epsilon_1,\epsilon_2.\) We define~\(X_u\) to be the \emph{random location} of the vertex~\(u\) and let the Euclidean distance~\(d(X_u,X_v)\) between~\(X_u\) and~\(X_v\) denote the length of the edge~\((u,v).\) We define the cost factor of the edge~\(h = (u,v)\) to be
\begin{equation}\label{cst_def_mast_spat}
r(X_u,X_v) = \frac{1}{d^{\alpha}(X_u,X_v)},
\end{equation}
where~\(\alpha \geq 0\) is a constant.

Continuing with the applications to communication networks,~\(X_v\)  denotes the location of the terminal~\(a_v.\) Communication from~\(a_v\) to a nearby terminal requires low transmission power and so in this case, we interpret the cost factor~\(r(X_u,X_v)\) to be the \emph{savings} in transmission power for the link between~\(a_u\) and~\(a_v.\) As before, the weight~\(W(h)\) of the edge~\(h = (u,v)\) is the fading  gain experienced by the  link between~\(a_u\) and~\(a_v\) and~\(1-p(u,v)\) is the probability of link failure due to external factors.

The maximum cost~\(\chi_n\) is a measure of the overall savings in transmission power and we have the following result regarding the growth of~\(\chi_n.\)
\begin{theorem}\label{thm_spat_mast} Suppose~\(p(u,v) = p =o(1)\) for all edges~\((u,v)\) and the edge weights satisfy
\begin{equation}\label{edge_vt_ax}
\delta_0 \leq \left(\mathbb{E}W(u,v)\right)^2 \leq \mathbb{E}W^2(u,v) \leq \delta_1
\end{equation}
for some constants~\(\delta_0,\delta_1 >0.\)\\
\((a)\) There is a constant~\(\theta > 0\) such that if~\(p \geq \frac{\theta \log{n}}{n},\) then
\begin{equation}\label{dev_bds_mast_spat}
\mathbb{P}\left(E_{con} \bigcap \left\{\chi_n \geq \theta n \cdot (np)^{\alpha/2}\right\}\right) \geq 1- e^{-\theta np} - n \cdot \exp\left(-\frac{\theta}{p}\right).
\end{equation}
\((b)\) If the edge weight has bounded~\(s^{th}\) moments; i.e.,~\(\mathbb{E}W^{s}(u,v) < \infty\) for some~\(s \geq \frac{2}{\alpha}+1,\)
then there is a constant~\(\gamma > 0\) such that if~\(p \geq \frac{\gamma \log{n}}{n},\) then
\begin{equation}\label{dev_bds_mast_spat_up}
\mathbb{E}\chi_n \leq \gamma n \cdot (np)^{\alpha/2}\;\;\text{ and }\;\;\mathbb{P}\left(\chi_n \leq \gamma n \nu_n \cdot (np)^{\alpha/2}\right) \geq 1- \frac{\gamma}{\log{n}} - \gamma p,
\end{equation}
where~\[\nu_n :=
\left\{
\begin{array}{ll}
1, & \text{ if }\;\;0 < \alpha < \frac{2}{3}\\
&\\
\log{n}, & \text{ if }\;\;\frac{2}{3} \leq \alpha < 1.
\end{array}
\right. \] Moreover if~\(\alpha < \frac{2}{3},\) then~\(var(\chi_n) \leq \gamma_0 n^2p \cdot (np)^{\alpha}\) for some constant~\(\gamma_0 > 0.\)
\end{theorem}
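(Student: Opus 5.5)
Part (a) will come from a greedy construction that has each vertex pick a short edge. Part (b) will come from bounding $\chi_n$ by a sum over vertices of the largest cost incident to each vertex. The variance bound will follow from an Efron--Stein/martingale difference argument.

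\emph{Part (a).} Tile $S$ into squares of side $\ell = (\kappa\,np)^{-1/2}\cdot n^{0}$. More precisely, pick $\ell$ so that a square contains $\Theta(n\ell^2)$ points and each vertex has $\Theta(n\ell^2 p)$ expected $G$-neighbours inside its own square, and take $n\ell^2 p$ to be a large constant multiple of $\log n$. The simplest scale giving length $\asymp (np)^{-1/2}$ is the one where a vertex expects $\Theta(1)$ neighbours at distance $\le (np)^{-1/2}$.

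The construction is as follows. For each vertex $u$, let $N_u$ be the number of $G$-neighbours $v$ with $d(X_u,X_v)\le (np)^{-1/2}$. Conditionally on the locations, $N_u$ is Binomial with mean $\asymp 1$. Hence a positive fraction of the vertices have such a neighbour, and by bounded differences (or a Chernoff bound over the $n$ vertices) at least $cn$ vertices do, with probability $1-e^{-\theta np}$.

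Orient each chosen short edge away from a vertex with smaller label, to avoid cycles; for example, pick a matching-like forest. Since each vertex picks one edge to a neighbour, I will select the edges via a greedy acyclic rule: only $u\to v$ with $v<u$, which still gives a constant fraction of vertices. This forest $F$ has $\ge cn$ edges, each with $r\ge (np)^{\alpha/2}$. The weights on $F$ are independent of the selection, and by (\ref{edge_vt_ax}) and Paley--Zygmund/Chebyshev, $\sum_{h\in F}W(h)\ge c'n$ with high probability.

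On $E_{con}$, $F$ extends to a spanning tree, and since costs are positive, $\chi_n \ge c(F)\ge \theta n (np)^{\alpha/2}$. The connectivity estimate $\mathbb{P}(E_{con}^c)\le n e^{-\theta/p}$-type terms come from the degree argument already used in Theorem~\ref{thm_max_cst_low}. Indeed, (\ref{p_cond_new}) holds trivially for homogeneous $p$, and I will reuse that connectivity bound.

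\emph{Part (b).} Any spanning tree can be rooted, and each non-root vertex then owns its parent edge. Hence
\[
\chi_n \le \sum_{u} M_u, \qquad M_u := \max_{v:\,(u,v)\in G} \frac{W(u,v)}{d^{\alpha}(X_u,X_v)} .
\]
To control $\mathbb{E}M_u$, I will decompose the neighbours of $u$ into dyadic annuli $A_k=\{2^{-k-1}<d\le 2^{-k}\}$. The expected number of $G$-neighbours in $A_k$ is $\asymp np\,4^{-k}$. Bounding the max by the sum over annuli beyond the scale $(np)^{-1/2}$ gives
\[
\mathbb{E}M_u \lesssim \sum_{k} \min\!\bigl(1,np4^{-k}\bigr)^{1/s'}\, 2^{k\alpha}\,\|W\|_s\cdot(\ldots).
\]
Here I use $\mathbb{E}\max_{i\le N}W_i\le N^{1/s}\|W\|_s$ for the near annuli. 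For the far annuli, $4^{-k}\ge 1/(np)$, and I use a max bound of order $(np4^{-k})^{1/s}$.

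For the near annuli, $np4^{-k}<1$, and the sum $\sum_k np\,4^{-k}2^{k\alpha}$ converges at the top scale when $\alpha<2$. The far annuli sum $\sum (np4^{-k})^{1/s}2^{k\alpha}$ is dominated by $k$ at $(np)^{-1/2}$, giving $(np)^{\alpha/2}$. This holds provided $\alpha>2/s$, which is where $s\ge 2/\alpha+1$ enters, with some slack for the $\log$ terms. This yields $\mathbb{E}\chi_n\le\gamma n(np)^{\alpha/2}$.

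For the probability bound, I will use Markov on the same sum with an extra $\nu_n$. For $\alpha\ge 2/3$, the tail of the nearest-neighbour term $d^{-\alpha}$ has heavier fluctuations. I would bound $\sum_u M_u$ by its mean outside an event where some pair is at distance $\le n^{-1}\cdot(\text{small})$. Truncating at distances $\ge 1/(n\sqrt{\log n})$ costs probability $\gamma/\log n$ by a union bound over pairs, together with the $\gamma p$ term from the degree estimate. The $\log n$ factor arises from the truncated second moment.

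\emph{Variance.} For $\alpha<2/3$, apply Efron--Stein by resampling one vertex's location together with its incident edge states and weights. Changing vertex $u$ alters $\chi_n$ by at most $2M_u+\sum_{v\sim u}M_v$-type terms. More simply, removing $u$ and reattaching its tree-neighbours costs at most $C(M_u+\max$ over neighbours$)$. Then $\mathbb{E}M_u^2\lesssim p\,(np)^{\alpha}\cdot n$, valid because $\mathbb{E}d^{-2\alpha}<\infty$ requires $2\alpha<2$ and the annulus computation converges for $\alpha<2/3$. Summing gives $n^2p(np)^\alpha$.

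The main obstacle is this last step and the $\nu_n$ bound: making the annulus moment computation converge uniformly, and handling the reattachment of tree-neighbours of a vertex of high degree. I would first try bounding the tree-degree by the $G$-degree, which is $O(np)$ with high probability.
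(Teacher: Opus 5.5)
\textbf{Where you differ from the paper.} Your part (a) and your bound on $\mathbb{E}\chi_n$ are correct, but they take a different route from the paper.

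In (a), the paper tiles $S$ into $np$ squares of side $(np)^{-1/2}$. It then shows, by a union bound over pairs of large vertex sets with no crossing edge, that each square contains a matching of size $\asymp 1/p$. The union bound over squares is where the term $n e^{-\theta/p}$ comes from. Your forest, in which each vertex keeps one $G$-edge of length $\le (np)^{-1/2}$ to a smaller label, is simpler and never produces that term. Two points need care:
\begin{itemize}
\item The indicators $I_u$ that $u$ has such a neighbour are dependent. A direct Chernoff bound does not apply, and edge-exposure McDiarmid is useless with about $n^2/2$ coordinates. The vertex-exposure Doob martingale does work: revealing $X_k$ changes $\mathbb{E}[I_w\mid\cdot\,]$ by $O(p\cdot\frac{1}{np})=O(1/n)$ for each $w>k$. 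So the increments are $O(1)$, and $\sum_u I_u\ge cn$ with probability $1-e^{-cn}$.
\item Chebyshev for $\sum_{h\in F}W(h)$ leaves an $O(1/n)$ error, which is not of the form in (\ref{dev_bds_mast_spat}); the paper's own proof has the same slack. The exponential lower tail of sums of nonnegative i.i.d.\ variables, via $\mathbb{E}e^{-tW}<1$, removes it.
\end{itemize}

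For the mean in (b), the paper proves $\mathbb{P}(c(u,v)>x)\le Dx^{-2/\alpha}$ and feeds this into the level-set counting of Lemma~\ref{lemma_max_cst_up}, which needs the event $E_{deg}$. Your bound $\chi_n\le\sum_u M_u$ with $\mathbb{E}M_u\lesssim (np)^{\alpha/2}$ needs no good events. It also follows directly from $\mathbb{P}(M_u>x)\le np\,\mathbb{P}(c(u,v)>x)$, without the annuli.

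\textbf{The gap: the variance step.} You flag this as the main obstacle, but the sketch does not resolve it. When $u$ is removed, the loss is bounded by $\sum_{v\sim_{\cal T}u}c(u,v)$, the sum over the neighbours of $u$ in the maximum tree ${\cal T}_n$. This can be of order $d_{\cal T}(u)M_u$, so $C(M_u+\max_v M_v)$ is not a valid bound. A counterexample is a hub with many expensive edges to vertices that are otherwise joined only by cheap edges.

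Your other version, $\sum_{v\sim_G u}M_v$, is valid, but its second moment is of order $(np)^{2+\alpha}$. Summed over $u$ this gives $n(np)^{2+\alpha}$, a factor $np$ too large. Bounding $d_{\cal T}(u)$ by $O(np)$ and each term by $M_u$ fails in the same way. Also, $\mathbb{E}M_u^2=O((np)^{\alpha})$; the extra factor $np$ must come from the tree degree, not from $M_u$.

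The missing idea, which the paper uses, is to pay the factor $np$ only once. On $E_{deg}$,
\[(\textstyle\sum_{v\sim_{\cal T}u}c(u,v))^2\le d_{\cal T}(u)\sum_{v\sim_{\cal T}u}c^2(u,v)\le 2B_0np\sum_{v\sim_{\cal T}u}c^2(u,v).\]
Summing the last sum over $u$ gives $2\rho_n=2\sum_{e\in{\cal T}_n}c^2(e)$, which is controlled because the tree has only $n-1$ edges. In your framework, each tree edge is owned by its child endpoint, so $\rho_n\le\sum_w M_w^2$ and $\mathbb{E}\rho_n\lesssim n(np)^{\alpha}$. Hence $\sum_u\mathbb{E}(\chi_n-\chi_{rem}(u))^2\lesssim n^2p\,(np)^{\alpha}$. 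The bad events are handled by $\sum_{f\in K_n}c(f)$ times an exponentially small probability. Completed this way, your route appears to need only $\alpha<1$. The paper's restriction $\alpha<2/3$ comes from the crude $s>3$ requirement in Lemma~\ref{lemma_max_cst_up}.

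\textbf{Two smaller points.}
\begin{itemize}
\item For $\alpha<2/3$, where $\nu_n=1$, the bound in (\ref{dev_bds_mast_spat_up}) does not follow from Markov. It follows from Chebyshev with the variance bound, and the $\gamma p$ term is $var(\chi_n)/(n(np)^{\alpha/2})^2$, not a degree estimate. For $2/3\le\alpha<1$, Markov with the extra $\log n$ already gives $1-\gamma/\log n$, so the truncation at distance $1/(n\sqrt{\log n})$ is unnecessary.
\item Efron--Stein needs independent blocks. Attach each edge $(u,v)$ to $\max(u,v)$ only, not to both endpoints.
\end{itemize}
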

For~\(0 < \alpha < 1,\) the above result essentially states that~\(\chi_n\) is mainly determined by the terminal locations, if the edge weights have sufficiently large moments. In the proof of Theorem~\ref{thm_spat_mast}, we also demonstrate that if~\(\alpha \geq 1,\) then the edge cost  has unbounded second moment.

Combining Theorems~\ref{thm_max_cst_low}-\ref{thm_spat_mast}, we have the following  ``phase" transition result.
\begin{corollary}\label{cor_example_two} Suppose the edge probabilities satisfy~\(p(u,v) = p = \frac{1}{n^{\beta}}\) for some~\(0 < \beta < 1\) all edges~\((u,v)\) and the edge weight ccdf~\(F_c\) satisfies
\begin{equation}\label{heav_tail_ccdf}
\frac{A_1}{x^s} \leq F_c(x) \leq \frac{A_2}{x^s}
\end{equation} for all~\(x\) large and some constants~\(A_1,A_2 > 0\) and~\(s > 1.\)\\
\((a)\) If~\( 0 < \alpha < \frac{2}{s+1},\) then
\[\mathbb{P}\left(E_{con} \bigcap \left\{ D_1 n \cdot (np)^{1/s} \leq \chi_n \leq D_2 n \cdot (np)^{1/s}\right\}\right) = 1-o(1)\]
and \[D_1 n \cdot (np)^{1/s} \leq \mathbb{E}\chi_n \leq D_2 n \cdot (np)^{1/s}\]
for some constants~\(D_1,D_2 > 0.\) Moreover,~\(\frac{\chi_n}{\mathbb{E}\chi_n} \longrightarrow 1\) in~\(L^2\) as~\(n \rightarrow \infty.\)\\
\((b)\) If either~\(\frac{2}{s-1} < \alpha < 1\) or the edge weights are i.i.d.\ exponentially distributed with finite mean, then
\[ \mathbb{P}\left(E_{con} \bigcap \left\{ C_1 n \cdot (np)^{\alpha/2} \leq \chi_n \leq C_2 n\nu_n \cdot (np)^{\alpha/2}\right\}\right) = 1-o(1)\]
and \[C_1 n \cdot (np)^{\alpha/2} \leq \chi_n \leq C_2 n \cdot (np)^{\alpha/2}\]
for some constants~\(C_1,C_2 > 0,\) where~\(\nu_n\) is as in the statement of Theorem~\ref{thm_spat_mast}. Also, if~\(\alpha < \frac{2}{3},\) then~\(\frac{\chi_n}{\mathbb{E}\chi_n} \longrightarrow 1\) in~\(L^2\) as~\(n \rightarrow \infty.\)
\end{corollary}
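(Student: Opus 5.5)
Part (a) follows from Theorems~\ref{thm_max_cst_low} and~\ref{thm_max_cst_up}, and part (b) from Theorem~\ref{thm_spat_mast}. In both cases the work is to check the hypotheses and compare the two scales \(H_c(np)\asymp (np)^{1/s}\) and \((np)^{\alpha/2}\). Since \(p=n^{-\beta}\) with \(\beta<1\), we have \(np=n^{1-\beta}\gg\log n\), so the lower bounds \(p\geq \lambda\log n/n\) required by each theorem hold for all large \(n\). Also \(p=o(1)\) and condition~(\ref{p_cond_new}) holds trivially with \(a_0=\gamma_0\) and \(b_0=1\). From~(\ref{heav_tail_ccdf}), inverting the ccdf gives \(H_c(y)\asymp y^{1/s}\), hence \(nH_c(np)\asymp n(np)^{1/s}\). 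The bound~(\ref{heav_tail_ccdf}) also gives~(\ref{f_scale}) with \(C_0=A_2/A_1\). For continuity of \(F_c\) at large \(x\), I would either assume it or argue that monotone perturbations do not affect the scale.

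For part (a), the cost factor is \(r=d^{-\alpha}\). In two dimensions \(\mathbb{E}d^{-2\alpha}<\infty\) exactly when \(\alpha<1\), and \(\alpha<2/(s+1)<1\), so~(\ref{cost_fact_cond}) holds by~(\ref{f_eq}). Theorem~\ref{thm_max_cst_low} then gives the lower bound. For Theorem~\ref{thm_max_cst_up} we need \(s>3\) and the domination~(\ref{dom_cond}).

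The domination is the main obstacle. I would compute \(F^{(ct)}_c(x)=\mathbb{E}\,F_c(x d^{\alpha})\). Since \(\mathbb{P}(d\le t)\asymp t^2\), the region of small \(d\) contributes roughly \(x^{-s}\mathbb{E}d^{-\alpha s}\), which is finite when \(\alpha s<2\). The condition \(\alpha<2/(s+1)\) is exactly what keeps this finite with room to spare, and then \(F_c^{(ct)}(c_2x)\le C x^{-s}\le C' F_c(x)\). The requirement \(s>3\) is more delicate. If \(s\le 3\) is allowed, I would either rerun the proof of Theorem~\ref{thm_max_cst_up} using only finite \(s\)-moments, or restrict to \(s>3\). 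Granting this, (\ref{dev_bound_mast_up}) and (\ref{dev_bound_exp_up}) give the upper bounds and the expectation bounds; the lower bound on the expectation follows from the probability bound.

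For \(L^2\) convergence, note that
\[
\mathbb{E}\Bigl(\tfrac{\chi_n}{\mathbb{E}\chi_n}-1\Bigr)^2=\tfrac{var(\chi_n)}{(\mathbb{E}\chi_n)^2}\le \tfrac{\theta n^2pH_c^2(np)}{D_1^2n^2H_c^2(np)}=O(p)\to0 .
\]

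For part (b), the cost factor remains \(d^{-\alpha}\). The weight moments satisfy~(\ref{edge_vt_ax}), since \(s>2\) gives a finite second moment, as does the exponential case. Theorem~\ref{thm_spat_mast}(a) gives the lower bound, because \(e^{-\theta np}+ne^{-\theta/p}=o(1)\). For Theorem~\ref{thm_spat_mast}(b) we need \(\mathbb{E}W^{s'}<\infty\) for some \(s'\ge 2/\alpha+1\). The condition \(\alpha>2/(s-1)\) gives \(2/\alpha+1<s\), so any \(s'\) in \([2/\alpha+1,s)\) works; for exponential weights every moment is finite. This yields the expectation and probability upper bounds. For \(\alpha<2/3\), the variance bound \(\gamma_0n^2p(np)^\alpha\) divided by \((\mathbb{E}\chi_n)^2\ge C n^2(np)^\alpha\) is \(O(p)\to0\), which gives \(L^2\) convergence. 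The lower bound on \(\mathbb{E}\chi_n\) again comes from the high-probability lower bound.
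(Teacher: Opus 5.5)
Your route is the paper's: verify the hypotheses of Theorems~\ref{thm_max_cst_low}--\ref{thm_max_cst_up} for (a) and of Theorem~\ref{thm_spat_mast} for (b), use $H_c(y)\asymp y^{1/s}$, and get $L^2$ convergence from $var(\chi_n)/(\mathbb{E}\chi_n)^2=O(p)$.

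The one step you do differently is the domination condition (\ref{dom_cond}). The paper bins $d^{-\alpha}$ into integer levels and bounds the mass of level $k$ by the whole tail $\mathbb{P}(d^{-\alpha}\geq k)\leq Dk^{-2/\alpha}$. This reduces everything to the summability condition (\ref{aazhi}), and that crude bin estimate is where $\alpha<2/(s+1)$ comes from. You instead split $F^{(ct)}_c(x)=\mathbb{E}F_c(xd^{\alpha})$ into small and large $d$, using the actual size $\mathbb{P}(d\leq t)\asymp t^2$ of the small-$d$ region. This gives $F^{(ct)}_c(x)\lesssim x^{-s}$ as soon as $\alpha s<2$, so it is shorter and sharper. (For bounded $x$, domination is trivial since $F_c$ is bounded below there.)

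Your caveat about continuity of $F_c$ is fair; the paper just asserts it. It is also harmless. The proof of Theorem~\ref{thm_max_cst_low} only needs a threshold $t_n\asymp (np)^{1/s}$ with $\mathbb{P}(W>t_n)\asymp 1/(np)$, and (\ref{heav_tail_ccdf}) supplies this with $t_n=(A_1np)^{1/s}$.

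The point you leave open, $s\leq 3$, is a genuine gap, and the paper's proof has it too. The paper checks (\ref{f_scale}) with the exponent $s$ of (\ref{heav_tail_ccdf}) and applies Theorem~\ref{thm_max_cst_up}, which requires that exponent to exceed $3$, while (a) allows every $s>1$. Your first fallback, rerunning Theorem~\ref{thm_max_cst_up} with finite moments, cannot close it in the whole range.

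\emph{Case $1<s\leq 2$.} Fix an edge $e$. On $E_{con}\cap\{e\in G\}$ the single edge $e$ extends to a spanning tree of $G$, so $\chi_n\geq c(e)\geq 2^{-\alpha/2}W(e)$. Since $W(e)$ is independent of the edge states, $\mathbb{E}\chi_n^2\geq 2^{-\alpha}\,\mathbb{E}W^2(e)\,\mathbb{P}(E_{con}\cap\{e\in G\})=\infty$. Hence the variance bound and the $L^2$ claim of (a) are false there, and the Chebyshev route to the upper deviation bound is unavailable.

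\emph{Case $2<s\leq 3$.} Here a repair does work. The only place Lemma~\ref{lemma_max_cst_up} uses $s>3$ is the bound (\ref{rho_est}) on $\mathbb{E}\rho_n$. Root the maximal tree and charge each edge to its child endpoint; then $\rho_n\leq\sum_v M_v^2$, where $M_v$ is the largest cost of an edge of $G$ at $v$. The bound $\mathbb{P}(M_v>x)\leq np\,F^{(ct)}_c(x)$ together with domination gives $\mathbb{E}M_v^2\lesssim (np)^{2/s}$. Hence $\mathbb{E}\rho_n\lesssim nH_c^2(np)$ whenever $s>2$.

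So your outline, like the paper's proof, establishes (a) for $s>3$, or for $s>2$ with this patch. For $1<s\leq 2$ the statement itself must be weakened.
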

The above result describes how the behaviour of the maximum cost~\(\chi_n\)  varies with~\(\alpha.\) If~\(\alpha\) is very small  then the cost factor is also small and so~\(\chi_n\) is mainly determined by the edge weights, as seen in part~\((a).\) On the other hand, for larger values of~\(\alpha\) that still ensure the cost has bounded second moment, we see that~\(\chi_n\) grows with~\(\alpha,\) as in part~\((b).\)  %Equivalently, we could also  state that if the edge weights have sufficiently large moments, then the effect of edge weights on~\(\chi_n\) is small.

Some interesting future directions:\\
\((D1)\) As mentioned in the paragraph following the statement of Theorem~\ref{thm_spat_mast}, for~\(\alpha \geq 1,\) the edge cost  has unbounded second moment and~\(\chi_n,\) intuitively, should again grow with~\(\alpha.\) How exactly is the growth?\\
\((D2)\) Similarly, the case~\(\frac{2}{s+1} \leq \alpha \leq \frac{2}{s-1}\) is also interesting since in this range of~\(\alpha,\) the maximum cost~\(\chi_n\) might depend on both the cost factor and the weights. It would be nice to estimate the rate of growth of~\(\chi_n\) as a function of~\(\alpha\) and~\(s.\)\\
\((D3)\) Also, does there exist a ``critical"~\(\alpha\) value between~\(\frac{2}{s+1}\) and~\(\frac{2}{s-1},\) below which the edge weight ``dominates" and above which the edge cost factor is the main influencing factor? Or is it a critical subinterval in~\(\left[\frac{2}{s+1}, \frac{2}{s-1}\right]\)?

\subsection{\bf Minimum Spanning Trees}
Let~\(G\) be the random subgraph of the complete graph~\(K_n\) with random vertex marks~\(\{X_u\}_{1 \leq u \leq n},\) edge states~\(\{Z(h)\}_{h \in K_n}\) and positive edge weights~\(\{W(h)\}_{h \in K_n},\) as described above. Let~\(c(h)\)  be the cost of the  edge~\(h = (u,v)\) with endvertices~\(u\) and~\(v\) as defined in~(\ref{cost_def}) and let~\(F^{(ct)}_c(x)\) be the edge cost ccdf as defined prior to~(\ref{h_c_def}). We define \[F^{(ct)}(x) := \mathbb{P}(c(h) \leq x) = 1-F_c^{(ct)}(x),\;\;x >0\] to be the cumulative distribution function (cdf) of the edge cost~\(c(h)\)  and for~\(0 \leq z \leq 1,\)  we also define
\begin{equation}\label{h_def}
J(z) := \max\left\{x > 0: F^{(ct)}(x) \leq z\right\},
\end{equation}
to be the inverse cdf. Similarly, we let~\(F(x)\) and~\(H(z)\)  denote the edge \emph{weight} cdf and inverse cdf, respectively.

Defining the cost~\(c({\cal T})\) of a tree~\({\cal T} \subset G\) as in~(\ref{tree_cost}), we let~\(\tau_n\) denote the \emph{minimum} cost   of a spanning tree of the largest component of~\(G.\) Recalling that~\(E_{con}\) denotes the event that~\(G\) is connected, we have the following  result. As before constants do not depend on~\(n.\)
\begin{theorem}\label{thm_min_cst_weak} Suppose the connectivity condition~(\ref{p_cond_new}) in the statement of Theorem~\ref{thm_max_cst_low} holds with~\(0 < p = p(n) < 1\) and in addition:\\
\((I)\) The edge cost factor satisfies~\(\mathbb{E}r^2(x,X_1) \leq B\) for all~\(x \in \Omega_{mk}\) and the edge weights have bounded~\(s^{th}\) moment for some~\(s \geq 2;\) i.e.,~\(\mathbb{E}W^s(h) \leq B,\) for some constant~\(B > 0.\) \\
\((II)\) There are constants~\(c_1,c_2 > 0\) such that
\begin{equation}\label{sandwich_cond}
F(x) \geq c_1F^{(ct)}(c_2x)  \text{ for all } x.
\end{equation}
For every~\(\gamma > 0,\) there is a constant~\(\lambda > 0\) such that if~\(p \geq \frac{\lambda \log{n}}{n}\) and~\(p=o(1),\) then
\begin{equation}\label{mn_comp_bounds}
\mathbb{P}\left(E_{con} \bigcap \left\{\lambda^{-1} n \zeta_n \leq \tau_n \leq \lambda n \varphi_n \right\}\right) \geq 1- \lambda \cdot p,
\end{equation}
%\[\lambda^{-1} n \zeta_n \leq \mathbb{E}\tau_n \leq \lambda n \varphi_n + \frac{1}{n^{1+\gamma}}\]
and
\begin{equation}\label{mst_var_bonda_ax}
\lambda^{-1} n \zeta_n \leq \mathbb{E}\tau_n \leq \lambda n \varphi_n + \frac{1}{n^{1+\gamma}}\;\;\text{ and }\;\;var(\tau_n) \leq \lambda n^2p \varphi_n^2 + \frac{1}{n^{1+\gamma}},
\end{equation}
where
\begin{equation}\label{var_phi_ax}
\zeta_n  = \zeta_n(\lambda) := H\left(\frac{1}{\lambda np}\right)\;\;\text{ and }\;\;\varphi_n  = \varphi_n(\lambda) := H\left(\frac{\lambda \log{n}}{np}\right).
\end{equation}
\end{theorem}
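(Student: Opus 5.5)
We need a plan for the two-sided bounds on $\tau_n$: a lower bound via vertex-local minima, an upper bound via a light-edge subgraph, and moment bounds via a martingale difference decomposition.

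\emph{Lower bound.} Every spanning tree of a connected $G$ has at least one edge at each vertex, and each edge is counted at most twice. Hence $\tau_n \geq \frac{1}{2}\sum_u m_u$, where $m_u$ is the minimum cost among the $G$-edges at $u$. It is cleaner to use a disjoint structure. Fix roughly $n/2$ vertex-disjoint ``stars'', or simply pair the vertices, and note that a spanning tree must use, for at least $n/2$ of the vertices, distinct edges. Then use the fact that a vertex has degree at most $b_0np$ by (\ref{p_cond_new}) and standard Chernoff bounds.

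The probability that some edge at $u$ has cost below $x$ is at most $2b_0np\,F^{(ct)}(x)$. By (\ref{sandwich_cond}), $F^{(ct)}(x) \leq F(x/c_2)/c_1$. Choosing $x = c_2\zeta_n$ makes this at most a small constant for $\lambda$ large. So each $u$ has $m_u \geq c_2\zeta_n$ with probability at least $1/2$.

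These events are not independent across $u$, since they share edges. To handle this, I would use a bounded-differences / Chebyshev argument on $N := \#\{u : m_u \geq c_2\zeta_n\}$. Its variance is $O(n)$, because two vertices interact only through their common edge. This gives $N \geq n/4$ with probability at least $1 - O(1/n) \geq 1 - \lambda p$, since $p \geq \lambda\log n/n$. Combined with Theorem~\ref{thm_max_cst_low}'s connectivity estimate ($\mathbb{P}(E_{con}) \geq 1 - \lambda/n$), this yields $\tau_n \geq \lambda^{-1} n\zeta_n$.

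\emph{Upper bound.} Here I would use a threshold subgraph. Let $G_t$ keep only the edges $h \in G$ with $W(h) \leq t := \varphi_n = H(\lambda\log n/(np))$. Then $G_t$ is an inhomogeneous Bernoulli graph with edge probabilities $p(u,v)F(t) = p(u,v)\lambda\log n/(np)$. This satisfies (\ref{p_cond_new}) with $p$ replaced by $\lambda\log n/n$, so the connectivity part of Theorem~\ref{thm_max_cst_low} applies to $G_t$: it is connected with probability at least $1 - O(1/n)$ once $\lambda$ is large.

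The cost factors then need controlling. A spanning tree of $G_t$ has cost at most $t\sum_{h\in\mathcal T} r(X_u,X_v)$, which is bounded by the sum of $r$ over $n-1$ edges. The difficulty is that $\mathcal T$ depends on the marks. To avoid this, I would build the tree by exploring the graph independently of the marks. For example, run Kruskal only on the weights, i.e.\ take a BFS tree of $G_t$, which uses only $Z$ and $W$. Given that tree, $\sum r$ over its edges has mean at most $nB^{1/2}$ and variance $O(n)$ (the tree is a fixed graph, and the covariances of $r$ along adjacent tree edges are bounded via $\mathbb{E}r^2(x,X_1) \leq B$ and the bounded degree). Chebyshev then gives $\tau_n \leq \lambda n\varphi_n$ with probability at least $1 - O(1/n)$. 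Since the tree cost is invariant up to $t$, this suffices.

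\emph{Expectation and variance.} These I would obtain by writing $\tau_n - \mathbb{E}\tau_n$ as a sum of martingale differences $\sum_u \Delta_u$, revealing vertex $u$'s mark and all edges to lower-indexed vertices. Removing vertex $u$ and reattaching changes the MST cost by at most the reattachment cost of $u$ plus the cost of reconnecting its tree neighbours. On the good event this is $O(\varphi_n)$ times the number of affected edges, roughly $np$ of them, with only $O(1)$ expected contributions. I would then use Efron--Stein: $\mathrm{var}(\tau_n) \leq \sum_u \mathbb{E}(\tau_n - \tau_n^{(u)})^2$. The natural target is a bound $\lambda np\,\varphi_n^2$ per vertex, giving $\lambda n^2 p\varphi_n^2$ in total.

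On the bad event, of probability $O(n^{-K})$ with $K$ large by choosing $\lambda$ large, I would bound $\tau_n \leq \sum_h c(h)$. Its second moment is $\operatorname{poly}(n)$ by the $s\geq 2$ moment assumption and $\mathbb{E}r^2 \leq B$. Cauchy--Schwarz then gives the additive $n^{-1-\gamma}$ error terms. The expectation lower bound follows from the high-probability lower bound, since $\tau_n \geq 0$.

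\emph{Main obstacle.} I expect the hardest step to be the Efron--Stein / martingale increment bound. Controlling how much the MST cost changes when the data at one vertex is resampled requires showing that the neighbourhood of $u$ in $G_t$ can be rewired at cost $O(np\,\varphi_n)$. Here I would first try using the independent threshold graph $G_t$ with the vertex $u$ deleted, which remains connected whp, and reattach each former neighbour directly into it.
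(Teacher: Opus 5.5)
You have correctly spotted the main obstacle, but you have not resolved it, and it is a genuine gap in the variance bound. In the Efron--Stein increment you must rejoin the $\Delta$ components of $\mathcal{T}_n$ with $u$ deleted (and reattach $u$, or replace a resampled tree edge) using edges that each cost $O(\varphi_n)$. Rewiring inside the threshold graph $G_t$ controls only the \emph{weights} of these edges. The cost factors are unbounded: you only know $\mathbb{E}r^2(x,X_1)\le B$. Moreover, the rejoining edges are selected according to $\mathcal{T}_n$, which depends on the marks. So the averaging over marks that you use for a mark-independent tree is unavailable here. Nor can you put ``every edge of $G_t$ has $r\le K$'' into the good event, since a positive fraction of the edges of $G_t$ may violate it.

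The missing idea, which is what the paper does, is to truncate the cost factor. Call $(u,v)$ \emph{effective} if $r(X_u,X_v)\le K$.
\begin{itemize}
\item Because the moment bound is uniform in $x$, given $X_u$ the edges at $u$ are independently non-effective with probability at most $B/K^2\le\varepsilon$. Hence with probability $1-ne^{-Cn}$ every vertex has at most $2\varepsilon n$ non-effective edges in $K_n$.
\item States and weights are independent of the marks. So, conditionally on the set of effective edges, the graph of light ($W\le 2\varphi_n$) effective edges is an independent-edge graph. It still satisfies the lower half of (\ref{p_cond_new}), for sets of size at least $(\gamma_0+2\varepsilon)n$, with $p$ replaced by $\lambda\log n/n$.
\item Hence this graph is connected, with all degrees of order $\lambda\log n$, with probability $1-n^{-C\lambda}$. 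The same holds after deleting vertices $1,2$.
\item On that event every rewiring edge costs at most $2K\varphi_n$ deterministically. This gives $|\tau_n-\tau_{rem}(1)|\le 2K\varphi_n\Delta(1)$ and $|\tau_n-\tau_{rem}(f_1)|\le 4K\varphi_n\ind(f_1\in\mathcal{T}_n)$.
\item Then $\Delta(1)^2\le 2B_0np\,\Delta(1)$ on $E_{deg}$, together with $\sum_j\Delta(j)=2(n-1)$, yields the $n^2p\varphi_n^2$ bound.
\end{itemize}
The polynomially small failure probability of this event is also what makes the Cauchy--Schwarz remainders of order $n^{-1-\gamma}$. Your Chebyshev event fails with probability of order $p$, not $n^{-K}$, so it cannot play this role.

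There are two smaller points.

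\emph{Lower bound.} The inequality $\tau_n\ge\frac12\sum_u m_u$ is fine, but $\mathrm{var}(N)=O(n)$ is false. The event $\{m_u\ge x\}$ depends on the marks of all neighbours of $u$, so $u$ and $w$ interact through every common neighbour's mark. For example, take homogeneous $p$, uniform weights, and $r=1/(np)$ whenever an endpoint carries a mark of probability $1/(np)$, $r=1$ otherwise. Then (I)--(II) hold with fixed constants, yet each such vertex pushes $m_u$ below $c_2\zeta_n$ for order $np$ of its neighbours at once, and $\mathrm{var}(N)$ is of order $n^2p$. Chebyshev then gives failure probability only $O(p)$, which is still all that (\ref{mn_comp_bounds}) asks. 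The clean route is $n-N\le 2N_{bad}$, where $N_{bad}$ counts edges of $G$ with cost below $x$. On $E_{deg}$ its variance is at most $5B_0np\,\mathbb{E}N_{bad}$, because non-adjacent edges give independent events. This is the paper's Lemma~\ref{lemma_mst_low}, followed by converting $J$ to $H$ via (\ref{sandwich_cond}).

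\emph{Upper bound.}
\begin{itemize}
\item A BFS tree need not have bounded degree. On $E_{deg}$ its degrees are at most $2B_0np$, so $\mathrm{var}\bigl(\sum r\bigr)=O(n^2p)$ and the failure probability is again $O(p)$.
\item Get $\mathbb{E}\tau_n$ from $\mathbb{E}\bigl[\tau_n\ind(G_t\text{ connected})\bigr]\le \varphi_n(n-1)\mathbb{E}r(X_1,X_2)$, by conditioning on states and weights, rather than from the Chebyshev event.
\end{itemize}
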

The minimum cost of a spanning tree depends on the behaviour of the edge weight cdf close to the origin (and hence the inverse cdf).

As before, we now illustrate the bounds in Theorem~\ref{thm_min_cst_weak} for the terminals with repeater example described prior to~(\ref{x_dist}) and the spatial spanning trees problem described in Example~\(2\) of Section~\ref{sec_main_res}.\\
\emph{\underline{Example 2} (Terminals with repeaters)}: Assume that the marks~\(\{X_i\}\) are i.i.d.\ with distribution as in~(\ref{faap}) and the cost factor~\(r(X_i,X_j)\) has distribution~(\ref{cost_def_obi}), for some positive deterministic sequences~\(g_n\) and~\(h_n.\) %= o(1)\) and~\(h_n \rightarrow \infty\).

We recall that the mark~\(X_i\) of the~\(i^{th}\) vertex representing the terminal~\(a_i\) equals~\(1\) if and only if~\(a_i\) does not have a repeater. Also, we recall that the link between the terminals~\(a_i\) and~\(a_j\) is said to be bad only if neither of the terminals~\(a_i\) or~\(a_j\) has a repeater. The edge weights and cost factors have different interpretations here: the weight~\(W(i,j)\) of the edge~\((i,j)\) represents the nominal price to be paid for transmission over the link connecting terminals~\(a_i\) and~\(a_j\) and the cost factor~\(r(X_i,X_j)\)  is the extra \emph{penalty} incurred if~\((i,j)\) is found to be bad.

%This is unlike the  example corresponding to the maximal spanning trees, where each terminal had a repeater with small but constant probability and the cost factor was the attenuation encountered by a transmission signal passing the link. The reason is that, here, we are interested in \emph{minimizing} the overall penalty (cost) as opposed to \emph{maximizing} the overall gain (cost) as described in the analysis following~(\ref{faap}).

Suppose we have to pay a high  penalty for a bad link;  we model this by setting~\(h_n \rightarrow \infty.\) To avoid paying high fines, we would like to install as many repeaters as possible and so we set~\(g_n = o(1).\) Following a similar analysis as in the discussion preceding Corollary~\ref{cor_repeat_mast}, we get that the cost of a fully connected chosen randomly (without considering the link prices) is at least of the order of~\(n,\) with high probability. The following result estimates the gain achieved by selecting a minimum cost network.
\begin{corollary}\label{cor_mst_repeat} Suppose~\(h_n \rightarrow \infty\) and~\(g_n = o(1)\) and~\[\limsup h^2_ng_n < \infty, \;\;p(u,v) = p = \frac{1}{n^{\beta}}\] for all edges~\((u,v)\) and some constant~\(0 < \beta < 1.\) If the edge weights are i.i.d.\ uniform in~\([0,1],\) then there are constants~\(\delta_1,\delta_2 > 0\) such that
\begin{equation}\label{disco}
\mathbb{P}\left(E_{con} \bigcap \left\{\frac{\delta_1}{p} \leq \tau_n \leq \frac{\delta_2 \log{n}}{p} \right\}\right)= 1-o(1)
\end{equation}
and
\begin{equation} \label{trimsa}
\frac{\delta_1}{p} \leq \mathbb{E}\tau_n \leq  \frac{\delta_2 \log{n}}{p}
\end{equation}
for all~\(n\) large. Moreover~\(\frac{\tau_n}{\mathbb{E}\tau_n} \rightarrow 1\) in~\(L^2\) as~\(n \rightarrow \infty.\)
\end{corollary}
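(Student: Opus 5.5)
Corollary~\ref{cor_mst_repeat} follows by applying Theorem~\ref{thm_min_cst_weak} to the repeaters model and then computing the weight inverse cdf \(H\) for uniform weights.

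\emph{Hypotheses of Theorem~\ref{thm_min_cst_weak}.} Since \(p(u,v)=p=n^{-\beta}\), condition~(\ref{p_cond_new}) holds with \(a_0=\gamma_0\) and \(b_0=1\). Moreover \(p\geq \lambda\log n/n\) and \(p=o(1)\) for all large \(n\), because \(\beta<1\).

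For~\((I)\), uniform weights are bounded, so every moment is at most \(1\). For the cost factor,
\[\mathbb{E}r^2(x,X_1) \leq 1 + h_n^2\,\mathbb{P}(X_1=1),\]
and the role of the assumption \(\limsup h_n^2 g_n<\infty\) is to keep this bounded. As written, \(g_n=\mathbb{P}(X_i=0)\) is the repeater probability, and taken literally \(\mathbb{P}(X_1=1)=1-g_n\to 1\), which would make \(\mathbb{E}r^2\sim h_n^2\) unbounded. The intended reading (installing many repeaters) is that the probability of a \emph{bad} terminal is \(g_n\to0\). Under that reading \(\mathbb{E}r^2(x,X_1)\leq 1+h_n^2g_n\leq B\), so I will adopt it.

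For~\((II)\), note \(h_n\geq1\) eventually, so \(r\geq 1\) and \(c(h)\geq W(h)\). Hence \(F^{(ct)}(x)=\mathbb{P}(c\leq x)\leq \mathbb{P}(W\leq x)=F(x)\), and~(\ref{sandwich_cond}) holds with \(c_1=c_2=1\).

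\emph{Evaluating the bounds.} For uniform weights \(H(z)=z\). Therefore \(\zeta_n=\frac{1}{\lambda np}\) and \(\varphi_n=\frac{\lambda\log n}{np}\), which give \(n\zeta_n=\frac{1}{\lambda p}\) and \(n\varphi_n=\frac{\lambda\log n}{p}\). With these values, (\ref{mn_comp_bounds}) yields~(\ref{disco}) with probability at least \(1-\lambda p=1-o(1)\). Likewise, (\ref{mst_var_bonda_ax}) yields~(\ref{trimsa}), since the additive error \(n^{-1-\gamma}\) is negligible against \(\log n/p\geq \log n\).

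\emph{\(L^2\) convergence.} This is the main step. We have
\[\mathbb{E}\left(\frac{\tau_n}{\mathbb{E}\tau_n}-1\right)^2=\frac{var(\tau_n)}{(\mathbb{E}\tau_n)^2}\leq \frac{\lambda n^2p\,\lambda^2\log^2 n/(np)^2+n^{-1-\gamma}}{\delta_1^2/p^2}\lesssim p\log^2 n = n^{-\beta}\log^2 n\to0 .\]
The potential obstacle is that the expectation lower bound \(\delta_1/p\) and the variance upper bound carry mismatched logarithmic factors. The factor \(\log^2 n\) from \(\varphi_n^2\) is not offset by the lower bound, but it is harmless because \(p=n^{-\beta}\) decays polynomially. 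This is why the corollary assumes a polynomial \(p\) rather than \(p\) near the connectivity threshold.
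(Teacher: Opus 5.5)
Your proposal is correct and follows the paper's proof essentially step for step. Like the paper, you verify~(\ref{p_cond_new}), obtain condition~(I) from $\mathbb{E}r^2(x,X_1)\le 1+h_n^2g_n$ and condition~(II) from $c(h)\ge W(h)$, substitute $H(z)=z$ into Theorem~\ref{thm_min_cst_weak}, and divide the variance bound by $(\delta_1/p)^2$ to get $O(p\log^2 n)\to 0$. Your remark about $g_n$ is well taken: the paper's proof writes the same bound $1+h_n^2g_n$, which silently assumes $\mathbb{P}(X_1=1)=g_n$ rather than $1-g_n$ as in~(\ref{faap}), so the reading you state explicitly is the one the paper's argument actually depends on.
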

In other words, with high probability,  the minimum cost of setting up a fully connected network is of the order of~\(\frac{1}{p},\) modulo logarithmic factors. Moreover the minimum cost is concentrated around its expected value with high probability. Since~\(\frac{1}{p} = n^{\beta}\) is much smaller than order of ~\(n,\) the minimum cost of a randomly chosen network, we could interpret the term~\(np\) as the cost savings due to MSTs.

%EXP THAT SELECTING DET TREE CST IS AT LEAST ORDER OF N WHP!!!

Our next result considers the special case where the edge cost factor is bounded and estimates the MST cost for a homogenous  random graph. For convenience, we recall that~\(E_{con}\) denotes the event that~\(G\) is connected and that~\(F(.)\) and~\(H(.),\) respectively, denote the edge weight cdf and the  inverse edge weight cdf as defined in~(\ref{h_def}). We have the following result.
\begin{theorem}\label{thm_min_cst_strong} Suppose the following hold:\\
\((A)\) There exists~\(p = p(n) \in (0,1)\) and constant~\(B > 0\) such that~\[p(u,v) = p  \text{ for all edges } (u,v) \text{ and } r(x,y) \leq B \text{ for all } x,y.\]
\((B)\) There are constants~\(D,x_0,\theta > 0\) such that~\(F\) is strictly increasing in a neighbourhood of~\(\frac{x_0}{2}\) and
\begin{equation}\label{scale_two_cond}
F(kx) \geq D (\log{k})^{1+\theta} \cdot F(x)
\end{equation}
for all integers~\(k \geq 2\) and~\(0 < x < \frac{x_0}{k}.\)\\
For every~\(\gamma > 0,\) there is a constant~\(\kappa > 0\) such that if~\(np \geq \kappa \log{n},\) then
\begin{equation}\label{mn_up_wt}
\mathbb{P}\left(E_{con} \bigcap \left\{ \tau_n \leq \kappa \nu_{wt} + \frac{\kappa \varphi_n \log{n}}{p}   \right\}\right) \geq 1- \frac{1}{n^{1+\gamma}} - \exp\left(- \frac{ \nu^2_{wt}}{\kappa \mu_{wt}}\right)
\end{equation}
and
\begin{equation}\label{mn_up_wt_exp}
\mathbb{E}\tau_n \leq \kappa \nu_{wt} + \frac{\kappa \varphi_n \log{n}}{p} + \kappa n^2\cdot \exp\left(- \frac{\nu^2_{wt}}{\kappa \mu_{wt}}\right) + \frac{1}{n^{1+\gamma}},
\end{equation}  where
\begin{equation}\label{nu_wt_def}
\nu_{wt} := \sum_{j= \kappa \log{n}/p}^{n-1}H\left(\frac{\kappa}{jp}\right),\;\;\mu_{wt} := \sum_{j=\kappa \log{n}/p}^{n-1}H^2\left(\frac{\kappa \log{n}}{jp}\right)
\end{equation} and~\(\varphi_n = \varphi_n(\kappa)\) is as defined in~(\ref{var_phi_ax}).
\end{theorem}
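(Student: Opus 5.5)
We would prove Theorem~\ref{thm_min_cst_strong} by running a Prim-type greedy construction on the homogeneous graph and bounding the cost of the resulting spanning tree; its cost dominates $\tau_n$ on the event $E_{con}$. Since $r(x,y)\leq B$ by $(A)$, every edge cost satisfies $c(h)\leq B\,W(h)$, so it is enough to bound a sum of edge \emph{weights}. The process starts from vertex $1$ and, at step $j$, holds a tree ${\cal T}_j$ with $j$ vertices. It adds the cheapest $G$-edge (by weight) joining ${\cal T}_j$ to some vertex outside it. The key observation is that the edges from ${\cal T}_j$ to the outside are not yet explored, beyond the information ``they were not minimal earlier''. To avoid that conditioning, we use the standard device of revealing, at step $j$, only the edges from the newly added vertex. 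Alternatively, and more simply for this plan, we bound the minimum over the $j(n-j)$ candidate edges by the minimum over the $n-j$ edges from a fixed set of $j$ fresh (unexplored) pairs. A coupling makes this precise: the minimum over candidate edges is stochastically dominated by the minimum of roughly $\min(j,n-j)$ independent copies of $W\cdot Z$-filtered variables.

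\emph{Small trees, $j<\kappa\log n/p$.} Here we use the crude bound that each newly added edge has weight at most $\varphi_n=H(\kappa\log n/(np))$. Indeed, a fixed vertex outside the tree has about $np$ open edges to the remaining $\geq n/2$ vertices. The probability that none of them has weight below $H(\kappa\log n/(np))$ is at most $(1-\kappa\log n/(np))^{np/2}\leq n^{-\kappa/2}$. A union bound over all vertices and steps gives failure probability at most $n^{-1-\gamma}$ once $\kappa$ is large. The same estimate, using $np\geq\kappa\log n$, also gives $\mathbb{P}(E_{con}^c)\leq n^{-1-\gamma}$. These steps contribute at most $B\kappa\varphi_n\log n/p$.

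\emph{Large trees, $j\geq\kappa\log n/p$.} There are at least $jp/2$ open edges leaving the tree with high probability, again by a Chernoff bound plus a union bound costing $n^{-1-\gamma}$. The step-$j$ weight $M_j$ is then dominated by the minimum of $\sim jp$ i.i.d.\ weights, so $\mathbb{P}(M_j>H(t/(jp)))\leq e^{-t/2}$. Condition $(B)$, $F(kx)\geq D(\log k)^{1+\theta}F(x)$, converts this into moment bounds. Summing the tail over dyadic levels $x_k=H(k\kappa/(jp))$ and using $(B)$ to show that $H(k\kappa/(jp))\lesssim$ a summable multiple of $H(\kappa/(jp))$ weighted by $e^{-k}$ yields $\mathbb{E}M_j\leq C H(\kappa/(jp))$. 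The exponent $1+\theta>1$ is exactly what makes the series converge. Summing over $j$ gives the leading term $\kappa\nu_{wt}$.

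\emph{Concentration.} For the probability bound we truncate each $M_j$ at $H(\kappa\log n/(jp))$; truncation fails with probability at most $n^{-1-\gamma}$ after a union bound. The truncated increments are nearly independent under the exploration, so they form a martingale difference sequence bounded by $H(\kappa\log n/(jp))$. Azuma--Hoeffding then gives deviation probability $\exp(-\nu_{wt}^2/(\kappa\mu_{wt}))$ above $\kappa\nu_{wt}$, which yields (\ref{mn_up_wt}). For (\ref{mn_up_wt_exp}), on the bad event we bound $\tau_n$ by the cost of an arbitrary spanning tree, and we control its expectation by $\kappa n^2$ times the bad-event probability, using the moment bound on $W$ from $(B)$ near $x_0$ and Cauchy--Schwarz, or a crude $n^2$ edge sum. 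The remaining $n^{-1-\gamma}$ terms are absorbed the same way.

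The main obstacle is the dependence created by the greedy exploration, since previously examined edges are conditioned to be large. Our first attempt will be to reveal edges only from newly added vertices, and to bound $M_j$ by the minimum over fresh edges from the last $\lfloor j/2\rfloor$ added vertices to a fixed outside vertex set. This keeps the domination by i.i.d.\ minima intact.
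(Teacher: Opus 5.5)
Your overall scheme matches the paper's: a greedy exploration in which each step's weight is dominated by a minimum of fresh i.i.d.\ weights, an $H(1/l)$ bound for truncated minima derived from $(B)$, Hoeffding/Azuma, and a crude $2\varphi_n$ bound on $O(\log n/p)$ exceptional steps. The gap is in \emph{which} edges are fresh, and therefore which phase is expensive.

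In Prim's algorithm, the edges from an earlier tree vertex $u_i$ to the outside are scanned at every step after $u_i$ joins. Each time, they are conditioned not to be the minimum, i.e.\ to exceed the running minimum. That makes them stochastically larger, which is the wrong direction for an upper bound. So the edges from ``the last $\lfloor j/2\rfloor$ added vertices to a fixed outside set'' are not fresh. When the tree has $j$ vertices, the only fresh edges are the $n-j$ pairs from the newest vertex to the outside. Hence $M_j$ is dominated by a minimum of about $(n-j)p/2$ weights, not $\sim jp$.

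Your claim $\mathbb{E}M_j\le CH(\kappa/(jp))$ is in fact false for $j$ near $n$. Take $p=\frac12$ and uniform weights. By the hitting-time theorem for connectivity, w.h.p.\ Prim absorbs every vertex except the most isolated one using edges lighter than the threshold $t^*\approx 2\log n/n$. It then pays $M_{n-1}=t^*\asymp\varphi_n$, not $O(1/n)$. So the expensive steps are the \emph{last} $\kappa\log n/p$ ones, which your plan never treats.

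Your small-tree paragraph would not cover them anyway. Showing that every vertex has a light neighbour (light minimum degree $\ge 1$) does not produce a light edge across the cut between the tree and its complement.

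The repair has two parts:
\begin{itemize}
\item For $j\le n-\kappa\log n/p$, use the newest vertex's $\ge (n-j)p/2$ fresh open edges. This also makes the early steps cheap, since there $(n-j)p\ge np/2$.
\item For the last $\kappa\log n/p$ steps, use connectivity of the light subgraph (edges of weight $\le 2\varphi_n$). This puts a light edge across every cut, as in the event $E_{light}$ from the proof of Theorem~\ref{thm_min_cst_weak}.
\end{itemize}
Reindexing $k=n-j$ turns $\sum_j H(\kappa/((n-j)p))$ into $\nu_{wt}$ and the squared truncation levels into $\mu_{wt}$. The result is the paper's argument with Prim in place of its nearest-neighbour path $v_1,v_2,\ldots$. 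In that path the fresh edges at step $j$ are likewise those from $v_j$ to the $n-j$ unvisited vertices, and the last $J_0$ vertices are attached through $G_{light}$.

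Your dyadic-level derivation of the expected-minimum bound from $(B)$ is a valid alternative to Lemma~\ref{lemma_z}. One more point: $(B)$ only constrains $F$ near $0$ and gives no upper-tail moment of $W$. The bad-event term in (\ref{mn_up_wt_exp}) therefore needs a second-moment assumption on $W$, which the paper also tacitly imports from the setting of Theorem~\ref{thm_min_cst_weak}.
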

We highlight the advantage of~(\ref{mn_up_wt}) in the next subsection, where we consider the cost of spatial MSTs. Specifically,  in the proof of Corollary~\ref{cor_example_two_mst} below, we demonstrate that the bounds in Theorem~\ref{thm_min_cst_strong} are stronger than Theorem~\ref{thm_min_cst_weak}. This is expected since~(\ref{mn_up_wt}) and~(\ref{mn_up_wt_exp}) are obtained under the (stronger) condition that the edge cost factors are \emph{absolutely} bounded, whereas Theorem~\ref{thm_min_cst_weak} only requires that the \emph{conditional} expectation is absolutely bounded.

Following the pattern established before,  our next result is a counterpart of Theorem~\ref{thm_spat_mast} and complements Theorems~\ref{thm_min_cst_weak}-\ref{thm_min_cst_strong} by describing conditions under which the overall MST cost is influenced by the edge cost factor rather than the edge weight.

%\mathbb{P}\left(\sum_{j=1}^{n-J_0} R_j \geq 2c\nu_{wt}\right) \leq 2\exp\left(-\frac{D\nu_{wt}^2}{\mu_{wt}}\right) + \frac{1}{n^{1+\gamma}} +  n^2 \cdot \exp\left(-\frac{np}{32}\right).

\subsection*{\em Spatial MSTs}
As described prior to Theorem~\ref{thm_spat_mast}, let~\(\{X_i\}_{1 \leq i \leq n}\) be i.i.d.\  with a common density~\(f(.)\) in the unit square~\(S = [0,1]^2\) satisfying~(\ref{f_eq}) for all~\(x \in S\) and some finite positive constants~\(\epsilon_1,\epsilon_2.\) Similar to~(\ref{cst_def_mast_spat}), we define the cost factor of the edge~\(h = (u,v)\) to be
\begin{equation}\label{cst_def_mst_spat}
r(X_u,X_v) = \left(\frac{d(X_u,X_v)}{\sqrt{2}}\right)^{\alpha},
\end{equation}
where~\(\alpha \geq 0\) is a constant. Since the Euclidean distance between any two vertices is at most~\(\sqrt{2},\) we see that the edge cost factor is at most~\(1.\)

As in the case of  maximum cost spanning trees,~\(X_v\)  denotes the location of the terminal~\(a_v\)  and communication from~\(a_v\) to a nearby terminal requires low transmission power. Thus we interpret the cost factor~\(r(X_u,X_v)\) to be the nominal power budget for the link between~\(a_u\) and~\(a_v.\) The weight~\(W(h)\) of the edge~\(h = (u,v)\) is the extra cost (or penalty) involved due to fading \emph{attenuation} and as before~\(1-p(u,v)\) is the probability of link failure due to external factors, like shadowing.

Recalling that~\(E_{con}\) denotes the event that the random graph~\(G\) is connected, we have the following result regarding the minimum cost~\(\tau_n\) needed to setup a fully connected communication network.
\begin{theorem}\label{thm_spat_mst} Suppose the edge weights are~\(\leq 1\) a.s.\ and the edge weight cdf~\(F\) satisfies
\begin{equation}\label{tail_cnd_mst}
\sum_{k \geq 1} k^{2/\alpha} F\left(\frac{1}{k}\right) < \infty.
\end{equation}
There are constants~\(\lambda_1,\lambda_2  >0\) such that
\begin{equation}\label{mst_dev_bds_spat}
\mathbb{P}\left(E_{con} \bigcap \left\{\frac{\lambda_1n}{(np)^{\alpha/2}} \leq \tau_n \leq \lambda_2n \left(\frac{\log{n}}{np}\right)^{\alpha/2}\right\}\right) \geq 1-\lambda_1 \cdot p
\end{equation}
and
\begin{equation}\label{mst_exp_bds_spat}
\frac{\lambda_1 n}{(np)^{\alpha/2}} \leq \mathbb{E}\tau_n \leq \lambda_2 n \left(\frac{\log{n}}{np}\right)^{\alpha/2}.
\end{equation}
\end{theorem}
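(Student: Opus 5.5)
We will prove the lower and upper bounds separately, reading \(p(u,v)=p\) with \(p \geq \frac{\theta\log n}{n}\) and \(p=o(1)\) as implicit, as in Theorem~\ref{thm_min_cst_weak}. The upper bound comes from an explicit geometric spanning tree whose cost we control using only \(W \leq 1\). The lower bound comes from a nearest-neighbour argument in which condition~(\ref{tail_cnd_mst}) rules out anomalously small edge weights.

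\emph{Upper bound.} Since \(W\leq 1\), we have \(c(h)\leq r(X_u,X_v)\), so it suffices to bound the minimum cost factor spanning tree.
\begin{itemize}
\item Tile \(S\) into squares of side \(\ell=\sqrt{M\log n/(np)}\) for a large constant \(M\). Each square has roughly \(\ell^2 n\) vertices.
\item By standard Chernoff bounds together with~(\ref{f_eq}), with probability at least \(1-n^{-2}\) every square contains between \(\frac{\epsilon_1}{2}\ell^2n\) and \(2\epsilon_2\ell^2n\) vertices.
\item Conditional on the locations, the edge states are independent \(Bernoulli(p)\). For any vertex \(u\) and any square \(Q\) adjacent to or equal to its own, \(u\) has at least one \(G\)-neighbour in \(Q\) with probability at least \(1-(1-p)^{\epsilon_1 M\log n/(2p)}\geq 1-n^{-3}\) for \(M\) large.
\item On the intersection of these events, \(G\) is connected. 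Moreover, \(G\) restricted to edges of length at most \(3\ell\) is connected, so a spanning tree using only such edges exists.
\item That tree has cost at most \(n(3\ell/\sqrt2)^{\alpha}\leq \lambda_2 n\left(\frac{\log n}{np}\right)^{\alpha/2}\).
\end{itemize}
For the expectation, on the complement (probability \(O(n^{-1})\)) we use the trivial bound \(\tau_n\leq n\), since every cost is at most \(1\). This contributes \(O(1)\), which is negligible against \(n(\log n/np)^{\alpha/2}\geq n^{1-\alpha/2}\).

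\emph{Lower bound.} Any spanning tree covers every vertex. Each edge has two endpoints, so
\[
\tau_n\ \geq\ \frac12\sum_u \min_{v\sim u} c(u,v).
\]
Let \(m_u\) be this minimum over \(G\)-neighbours. Fix \(u\) and condition on \(X_u\). For each \(v\) the events \(\{Z(u,v)=1,\ c(u,v)\leq t\}\) are independent in \(v\), so
\[
\mathbb{P}(m_u\leq t)\leq (n-1)\,p\,\mathbb{P}\left(d^{\alpha}(X_u,X_v)\,W\leq 2^{\alpha/2}t\right).
\]
We bound the last probability by splitting on dyadic ranges of \(W\in(1/(k+1),1/k]\). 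This gives
\[
\mathbb{P}\left(d^\alpha W\leq t'\right)\leq \sum_k F\left(\tfrac1k\right)\mathbb{P}\left(d\leq ((k+1)t')^{1/\alpha}\right)\leq C\epsilon_2\, t'^{2/\alpha}\sum_k (k+1)^{2/\alpha}F\left(\tfrac1k\right),
\]
where we used \(\mathbb{P}(d\leq\rho)\leq \pi\epsilon_2\rho^2\). By~(\ref{tail_cnd_mst}) the sum is finite, so \(\mathbb{P}(d^\alpha W\leq t')\leq C' t'^{2/\alpha}\). This step is exactly where the tail condition is needed, and we expect it to be the main point.

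Now choose \(t=\eta (np)^{-\alpha/2}\) with \(\eta\) small. Then \(\mathbb{P}(m_u\leq t)\leq C''\eta^{2/\alpha}\leq \frac12\). Hence \(N:=\#\{u: m_u>t\}\) satisfies \(\mathbb{E}N\geq n/2\), and on \(\{N\geq n/4\}\) we get \(\tau_n\geq nt/8\).

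\emph{Main obstacle: concentration of \(N\).} The indicators \(\{m_u>t\}\) are dependent through shared locations and shared edges. We handle this with a second-moment bound. For \(u\neq w\), the covariance of the two indicators comes only from the single shared edge \((u,w)\) and from the locations \(X_u,X_w\). This gives a covariance of \(O(p+\frac1n)\), so \(var(N)=O(n^2p+n)\). Chebyshev then gives \(\mathbb{P}(N<n/4)=O(p+1/n)=O(p)\). Combining with the connectivity failure probability \(O(n^{-1})\leq O(p)\) yields~(\ref{mst_dev_bds_spat}).

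For the expectation lower bound in~(\ref{mst_exp_bds_spat}), on \(\{N\geq n/4\}\cap E_{con}\) we have \(\tau_n\geq nt/8\), and this event has probability at least \(1/2\). Therefore
\[
\mathbb{E}\tau_n\ \geq\ \tfrac12\cdot\tfrac{nt}{8}\ =\ \frac{\lambda_1 n}{(np)^{\alpha/2}},
\]
which completes the argument.
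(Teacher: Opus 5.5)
\textbf{Gap in the upper bound.} The events you control are local degree-type conditions: every square holds $\Theta(\ell^2 n)$ vertices, and every vertex has a $G$-neighbour in its own square and in each adjacent square. They do not imply that $G$, or its subgraph of edges of length at most $3\ell$, is connected. For example, split the vertices of every square into two classes, and let each class induce its own graph in which every vertex has neighbours in all nearby squares. Every vertex then satisfies your condition, yet no edge joins the two classes.

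What is missing is connectivity \emph{inside} each square. Conditional on the locations, the subgraph induced by a square is an Erd\H{o}s--R\'enyi graph on $m'\geq \frac{\epsilon_1}{2}M\log n/p$ vertices with edge probability $p$, so $m'p\geq \frac{\epsilon_1}{2}M\log n$. The cut-counting argument in the proof of Lemma~\ref{lemma_deg}$(b)$ then shows it is connected with probability $1-n^{-cM}$. Add one edge between consecutive squares of a snake ordering, which your neighbour events do supply. This gives a spanning tree of $G$ whose edges all have length $O(\ell)$. This is exactly the paper's argument, via the events $E_{vert}(i)$, $E_{con}(i)$ and $F_{cross}$.

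A smaller point concerns the expectation bound. An $O(1)$ error is not negligible against $n^{1-\alpha/2}$ when $\alpha\geq 2$, and $\alpha$ is unrestricted here. You need the failure probability to be $O(n^{-\alpha/2})$, which you get by taking $M$ large depending on $\alpha$. Otherwise your bound $\tau_n\leq n$ on the bad event, valid because $r\leq 1$ and $W\leq 1$, is fine and simpler than the paper's appeal to the argument behind~(\ref{mn_up_wt_exp}).

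\textbf{Lower bound: a different but valid route.} The paper uses Lemma~\ref{lemma_mst_low}, which counts edges of $G$ with cost below $\frac12 J(\varepsilon/(np))$. Conditional on the edge states, these events are independent for vertex-disjoint edges, so Chebyshev shows there are only $O(\varepsilon n)$ cheap edges, and every spanning tree has $n-O(\varepsilon n)$ expensive ones. You instead use $\tau_n\geq \frac12\sum_u m_u$ and count vertices whose cheapest incident edge is expensive. Both routes rest on the same estimate $\mathbb{P}(c(u,v)\leq x)\leq Cx^{2/\alpha}$ derived from~(\ref{tail_cnd_mst}).

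Your vertex-based count has a more delicate dependence structure, and your justification of the covariance bound is wrong as stated. The events $\{m_u>t\}$ and $\{m_w>t\}$ both depend on \emph{every} location $X_v$, not only on $X_u$, $X_w$ and the edge $(u,w)$. The bound $O(p+1/n)$ is nevertheless true:
\begin{itemize}
\item Condition on $X_u$, $X_w$ and on the state and weight of $(u,w)$. Set $A_v=\{Z(u,v)=1,\,c(u,v)\leq t\}$ and $B_v=\{Z(w,v)=1,\,c(w,v)\leq t\}$. The pairs $(A_v,B_v)$ are independent across $v\notin\{u,w\}$.
\item Each satisfies $\mathbb{P}(A_v),\mathbb{P}(B_v)=O(1/n)$, and $\mathbb{P}(A_v\cap B_v)\leq p\,\mathbb{P}(A_v)=O(p/n)$.
\item Both indicators are products over $v$ of factors in $[0,1]$. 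So the conditional covariance is at most $\sum_v|\mathbb{P}(A_v\cap B_v)-\mathbb{P}(A_v)\mathbb{P}(B_v)|=O(p+1/n)$.
\item The conditional means are functions of the independent $X_u$ and $X_w$, times the common indicator that $(u,w)$ is not a cheap edge of $G$. Their covariance is therefore $O(\mathbb{P}(Z(u,w)=1,\,c(u,w)\leq t))=O(1/n)$.
\end{itemize}
So the $O(p)$ term comes from third vertices, not from the shared edge. With this computation written out, your argument yields the lower halves of~(\ref{mst_dev_bds_spat}) and~(\ref{mst_exp_bds_spat}). Your route is self-contained, while the paper's edge count is shorter because conditioning on the graph leaves dependence only between adjacent edges.
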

The above result states that if the edge weight (or penalty) cdf decays sufficiently fast, close to the origin, then minimum cost~\(\tau_n\) essentially depends on the terminal locations. This is a worst case scenario, since~(\ref{tail_cnd_mst}) is trivially true if we impose constant penalty on each edge. In the case  when penalties are small with high probability, we expect that the minimum cost is less as well.

Combining Theorems~\ref{thm_min_cst_weak}-\ref{thm_spat_mst} and recalling that~\(E_{con}\) denotes the event that the random graph~\(G\) is connected, we have the following result.
\begin{corollary}\label{cor_example_two_mst} Let~\(\alpha > 0\) be as in~(\ref{cst_def_mst_spat}) and suppose the edge weight cdf~\(F\) satisfies~\(F(x) = x^{1/\delta}\) for all~\(0 < x< 1\) and some~\(0 < \delta < 1.\) Also suppose that the edge probability~\(p(u,v) = p = \frac{1}{n^{\beta}}\) for all edges~\((u, v)\) and some constant~\(0 < \beta < 1.\)\\
\((a)\) If~\( \alpha < \frac{2\delta}{1+\delta},\) then
\begin{equation}\label{skilp_fa}
\mathbb{P}\left(E_{con} \bigcap \left\{\frac{D_1n}{(np)^{\delta}} \leq \tau_n \leq \frac{D_2n}{(np)^{\delta}}\right\}\right) = 1-o(1)
\end{equation} and
\begin{equation}\label{skilp_ga}
\frac{D_1n}{(np)^{\delta}} \leq \mathbb{E}\tau_n \leq \frac{D_2n}{(np)^{\delta}},
\end{equation} for some constants~\(D_1,D_2 > 0.\) Moreover,~\(\frac{\tau_n}{\mathbb{E}\tau_n} \rightarrow 1\) in~\(L^2\) as~\(n \rightarrow \infty.\)\\
\((b)\) If~\( \alpha > \frac{2\delta}{1-\delta},\) then
\[\mathbb{P}\left(E_{con} \bigcap \left\{ \frac{\theta_1n}{(np)^{\alpha/2}} \leq \tau_n \leq \theta_2 n\left(\frac{\log{n}}{np}\right)^{\alpha/2} \right\}\right) = 1-o(1)\] and \[\frac{\theta_1n}{(np)^{\alpha/2}} \leq \mathbb{E}\tau_n \leq \theta_2 n\left(\frac{\log{n}}{np}\right)^{\alpha/2},\]  for some constants~\(\theta_1,\theta_2 > 0.\) Moreover, if~\(\beta > \frac{\alpha-2\delta}{1+\alpha-2\delta},\) then~\(\frac{\tau_n}{\mathbb{E}\tau_n} \rightarrow 1\) in~\(L^2\) as~\(n \rightarrow \infty.\)
\end{corollary}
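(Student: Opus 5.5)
The plan is to specialize Theorems~\ref{thm_min_cst_weak}, \ref{thm_min_cst_strong} and~\ref{thm_spat_mst} to $F(x)=x^{1/\delta}$, for which $H(z)=z^{\delta}$, and to $p=n^{-\beta}$, so that $np=n^{1-\beta}$ grows polynomially. The conditions $np\ge \lambda\log n$ and $p=o(1)$ then hold automatically. Every error probability of the form $\lambda p$, $n^{-1-\gamma}$ or $e^{-n^{c}}$ is $o(1)$.

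\emph{Part (a).}
\begin{itemize}
\item \emph{Lower bound via Theorem~\ref{thm_min_cst_weak}.} Condition $(I)$ is immediate: $r\le 1$ and $W\le 1$. For $(II)$ we need $\mathbb{P}(rW\le c_2x)\le c_1^{-1}x^{1/\delta}$. Conditioning on the marks gives $\mathbb{P}(rW\le x)=\mathbb{E}\min\{1,(x/r)^{1/\delta}\}\le x^{1/\delta}\,\mathbb{E}\,r^{-1/\delta}$. By~(\ref{f_eq}), $\mathbb{E}\,d^{-\alpha/\delta}(X_1,X_2)<\infty$ exactly when $\alpha/\delta<2$, and this holds since $\alpha<\frac{2\delta}{1+\delta}<2\delta$. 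Hence $\zeta_n=(\lambda np)^{-\delta}$, which gives the lower bounds $D_1n/(np)^{\delta}$ in~(\ref{skilp_fa}) and~(\ref{skilp_ga}).
\item \emph{Why Theorem~\ref{thm_min_cst_weak} is not enough for the upper bound.} Its upper bound $n\varphi_n=n(\lambda\log n/np)^{\delta}$ carries an extra $(\log n)^{\delta}$.
\item \emph{Upper bound via Theorem~\ref{thm_min_cst_strong}.} Condition $(A)$ holds with $B=1$. Condition $(B)$ holds because $F(kx)=k^{1/\delta}F(x)\ge D(\log k)^{2}F(x)$.
\item \emph{Size of $\nu_{wt}$.} Comparing with integrals, $\nu_{wt}=\sum_{j\ge\kappa\log n/p}^{n-1}(\kappa/jp)^{\delta}\asymp p^{-\delta}n^{1-\delta}=n/(np)^{\delta}$, since $\delta<1$.
\item \emph{The second term is negligible.} We have $\kappa\varphi_n\log n/p=\kappa(\kappa\log n/np)^{\delta}\log n/p$, whose ratio to $n/(np)^{\delta}$ is of order $(\log n)^{1+\delta}/(np)\to0$.
\item \emph{Size of $\mu_{wt}$ and the exponential term.} We have $\mu_{wt}\le(\kappa\log n/p)^{2\delta}\sum_{j\le n}j^{-2\delta}$, which is at most a polylogarithm times $p^{-2\delta}\max\{n^{1-2\delta},1\}$. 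Thus $\nu_{wt}^2/\mu_{wt}\ge n^{c}$ for some $c>0$ in all cases $\delta<\frac12$, $\delta=\frac12$, $\delta>\frac12$. So the exponential terms in~(\ref{mn_up_wt}) and~(\ref{mn_up_wt_exp}) are $o(1)$ even after multiplying by $n^2$.
\item \emph{Concentration.} Using $var(\tau_n)\le\lambda n^2p\varphi_n^2+n^{-1-\gamma}$ from~(\ref{mst_var_bonda_ax}) and $\mathbb{E}\tau_n\ge D_1n/(np)^{\delta}$, we get $var(\tau_n)/(\mathbb{E}\tau_n)^2=O(p(\log n)^{2\delta})\to0$. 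This gives $L^2$ convergence of $\tau_n/\mathbb{E}\tau_n$.
\end{itemize}

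\emph{Part (b).}
\begin{itemize}
\item \emph{High-probability and expectation bounds.} Condition~(\ref{tail_cnd_mst}) reads $\sum_k k^{2/\alpha-1/\delta}<\infty$. This is equivalent to $\frac1\delta-\frac2\alpha>1$, i.e.\ $\alpha>\frac{2\delta}{1-\delta}$. Theorem~\ref{thm_spat_mst} then gives the stated bounds directly.
\item \emph{Concentration.} We again want $var(\tau_n)\le\lambda n^2p\varphi_n^2$ with $\varphi_n=(\lambda\log n/np)^{\delta}$. Dividing by $(\mathbb{E}\tau_n)^2\ge\theta_1^2n^2/(np)^{\alpha}$ gives a ratio of order $(\log n)^{2\delta}\,n^{-\beta+(1-\beta)(\alpha-2\delta)}$. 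This tends to zero precisely when $\beta(1+\alpha-2\delta)>\alpha-2\delta$, which is the stated condition on $\beta$.
\end{itemize}

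\emph{Main obstacle.} The sticking point is justifying this variance bound in part (b). Here $\alpha>2\delta$, so $\mathbb{E}\,r^{-1/\delta}=\infty$ and the sandwich condition $(II)$ of Theorem~\ref{thm_min_cst_weak} fails. I would check that the martingale-difference variance estimate in the proof of Theorem~\ref{thm_min_cst_weak} uses only the moment condition $(I)$ together with connectivity and the upper bound on $\tau_n$, and not $(II)$. I expect $(II)$ to enter only through the lower bound, which in part (b) is supplied by Theorem~\ref{thm_spat_mst} instead. If that check fails, I would redo the variance bound directly: bound each martingale difference by the cost of rerouting around a resampled vertex, using $r\le1$ and the weight-based quantity $\varphi_n$.
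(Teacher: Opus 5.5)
Your proposal is correct and follows the paper's proof: Theorem~\ref{thm_min_cst_weak} for the lower bound and the variance, Theorem~\ref{thm_min_cst_strong} (with the same estimates on $\nu_{wt}$, $\mu_{wt}$ and the $\varphi_n\log n/p$ term) for the sharp upper bound, Theorem~\ref{thm_spat_mst} for part~(b), and the variance bound of Theorem~\ref{thm_min_cst_weak} reused in~(b). Your check there succeeds: that variance argument uses only condition~$(I)$ (with $K=1$ since $r\le 1$), connectivity, and rerouting through light edges, never~$(II)$, which is what the paper means by ``applicable since the edge weight and the edge cost factor are both bounded.'' The one local difference is how you verify~(\ref{sandwich_cond}): you condition on the marks and use $\mathbb{E}\,r^{-1/\delta}<\infty$, which needs only $\alpha<2\delta$. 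This is sharper than the paper's bucketing over $\{1/(k+1)<r\le 1/k\}$ with the tail bound $\mathbb{P}(r\le 1/k)\le Ck^{-2/\alpha}$, which requires $\alpha<\frac{2\delta}{1+\delta}$.
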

Again, there is a phase transition in the behaviour of the MST cost. For small~\(\alpha,\) the cost~\(\tau_n\) depends essentially on the edge weights and beyond a certain threshold value,~\(\tau_n\) is influenced primarily by the vertex locations.

Does there exist a  critical~\(\alpha\) value for the above phase transition? If so, what is the value and what is the behaviour of~\(\tau_n\) at the critical value? These are interesting questions to explore.

%\begin{corollary}\label{thm_example} Suppose GIVE QUASI HOM EXAMPLE HERE!!! the condition~(\ref{p_cond_max_ax}) holds for~\(\frac{(\log{n})^2}{n} \leq p = o(1)\) and some constants~\(a_0,b_0 > 0, 0 < \gamma_0 < \frac{1}{2}.\)\\
%\((a)\) Suppose the vertex marks~\(\{X_i\}_{1 \leq i \leq n}\) are i.i.d.\  uniform  in~\([0,1]\and the edge weights~\(\{W(f)\}_{f \in K_n}\) are i.i.d.\ exponential with unity mean. There are constants~\(\gamma_1,\gamma_2 > 0\) such that
%\begin{equation}\label{dev_bound_mast_tot_one}
%\mathbb{P}\left(E_{con} \bigcap \left\{ \gamma_1 n\log(np) \leq \chi_n \leq \gamma_2 n \log(np) \right\} \right) = 1- o(1).
%\end{equation}
%\((b)\) Suppose the vertex marks~\(\{X_i\}_{1 \leq i \leq n}\) are i.i.d.\ exponential with unity mean and the edge weights~\(\{W(f)\}_{f \in K_n}\) satisfy a power law: \[\mathbb{P}\left(W(f)  > x\right) = \frac{1}{(s-1)x^s} \text{ x > 1} WRT HERE!!1.\]
%There are constants~\(\lambda_1,\lambda_2 > 0\) such that
%\begin{equation}\label{dev_bound_mast_tot_two}
%\mathbb{P}\left(E_{con} \bigcap \left\{ \lambda_1 n \cdot (np)^{1/s} \leq \chi_n \leq \lambda_2 n \cdot (np)^{1/s} \right\} \right) = 1- o(1).
%\end{equation}
%\end{corollary}

%EXPL HERE!!!! THAT LC VS WT DOM!!!

\renewcommand{\theequation}{\thesection.\arabic{equation}}
\setcounter{equation}{0}
\section{Preliminaries}\label{sec_prelim}
Throughout, we use the following deviation estimates regarding sums of independent  random variables.
\begin{lemma}\label{lemmax}
\((a)\) Let~\(\{W_j\}_{1 \leq j \leq r}\) be independent Bernoulli random variables with~\[\mathbb{P}(W_j = 1) = 1-\mathbb{P}(W_j = 0) > 0.\] Setting~\(S_r := \sum_{j=1}^{r} W_j,\) we have for~\(0 < \epsilon \leq \frac{1}{2}\) that
\begin{equation}\label{conc_est_f}
\mathbb{P}\left(\left|S_r - \mathbb{E}S_r\right| \geq \epsilon \mathbb{E}S_r\right) \leq 2\exp\left(-\frac{\epsilon^2}{4}\mathbb{E}S_r\right).
\end{equation}
\((b)\) Let~\(\{U_j\}_{1 \leq j \leq r}\) be positive independent random variables satisfying~\(0 \leq U_j \leq 1\) and set~\(V_r := \sum_{j=1}^{r}\lambda_jU_j\) where~\(\lambda_j >0\) are positive numbers. For any~\(\epsilon > 0\) we have that
\begin{equation}\label{conc_mom_f}
\mathbb{P}\left(\left|V_r - \mathbb{E}V_r \right| \geq \epsilon \mathbb{E}V_r \right) \leq 2\exp\left(-\frac{\epsilon^2(\mathbb{E}V_r)^2}{\sum_{j=1}^{r}\lambda_j^2}\right).
\end{equation}
\end{lemma}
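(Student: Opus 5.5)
Both parts follow from the exponential Markov (Chernoff) method: bound the moment generating function of the centred sum, optimise over the free parameter, and apply the same bound to the negated variables for the opposite tail. The factor~\(2\) in both estimates is just a union bound over the upper and lower deviations.

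\emph{Part~\((a)\).} Let~\(\mu := \mathbb{E}S_r = \sum_j p_j\) with~\(p_j := \mathbb{P}(W_j=1)\). For~\(t>0\), independence and~\(1+x \leq e^x\) give
\[\mathbb{E}e^{tS_r} = \prod_{j}(1+p_j(e^t-1)) \leq \exp\left(\mu(e^t-1)\right).\]
Markov's inequality at level~\(e^{t(1+\epsilon)\mu}\) with~\(t = \log(1+\epsilon)\) then yields
\[\mathbb{P}(S_r \geq (1+\epsilon)\mu) \leq \exp\left(-\mu\left((1+\epsilon)\log(1+\epsilon)-\epsilon\right)\right).\]
For the lower tail, apply the same argument to~\(e^{-tS_r}\) with~\(t=-\log(1-\epsilon)\), giving the exponent~\(\mu\left((1-\epsilon)\log(1-\epsilon)+\epsilon\right)\).

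The only genuine step is an elementary inequality: on~\(0<\epsilon\leq \frac{1}{2}\), both functions~\((1+\epsilon)\log(1+\epsilon)-\epsilon\) and~\((1-\epsilon)\log(1-\epsilon)+\epsilon\) are at least~\(\epsilon^2/4\). I would prove this by Taylor expansion. The second function equals~\(\frac{\epsilon^2}{2}+\frac{\epsilon^3}{6}+\ldots\), so it is at least~\(\epsilon^2/2\). For the first, use~\(\log(1+\epsilon)\geq \epsilon-\epsilon^2/2\), which gives~\((1+\epsilon)(\epsilon - \epsilon^2/2) - \epsilon = \frac{\epsilon^2}{2}-\frac{\epsilon^3}{2} \geq \frac{\epsilon^2}{4}\) since~\(\epsilon\leq\frac12\). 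Adding the two tails gives~(\ref{conc_est_f}).

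\emph{Part~\((b)\).} This is Hoeffding's inequality. Each summand~\(\lambda_jU_j\) takes values in~\([0,\lambda_j]\), so Hoeffding's lemma gives
\[\mathbb{E}e^{t(\lambda_jU_j-\mathbb{E}\lambda_jU_j)}\leq e^{t^2\lambda_j^2/8}.\]
Multiplying over~\(j\) and applying Markov's inequality at deviation~\(a=\epsilon\mathbb{E}V_r\) gives
\[\mathbb{P}(V_r-\mathbb{E}V_r\geq a)\leq \exp\left(-ta + \frac{t^2}{8}\sum_j\lambda_j^2\right).\]
Choosing~\(t=4a/\sum_j\lambda_j^2\) makes the bound~\(\exp\left(-2a^2/\sum_j\lambda_j^2\right)\). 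Treating the lower tail symmetrically and summing yields~\(2\exp\left(-2\epsilon^2(\mathbb{E}V_r)^2/\sum_j\lambda_j^2\right)\), which is stronger than~(\ref{conc_mom_f}).

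The main obstacle is Hoeffding's lemma itself. I would prove it in the standard way: by convexity of~\(x\mapsto e^{tx}\), bound~\(e^{tx}\) on the interval by the chord through its endpoints, take expectations, and show that the logarithm~\(L(u)\) of the resulting expression satisfies~\(L(0)=L'(0)=0\) and~\(L''\leq\frac14\). Everything else is routine.
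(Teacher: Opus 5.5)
Your proposal is correct, and it uses the same exponential-moment (Chernoff/Hoeffding) method as the results the paper relies on: the paper gives no proof of its own and simply cites Appendix A of Alon--Spencer. Your Taylor-type bounds for the two Chernoff exponents on $0<\epsilon\le\frac12$ are valid, and in part (b) Hoeffding's lemma gives $2\exp\bigl(-2\epsilon^2(\mathbb{E}V_r)^2/\sum_j\lambda_j^2\bigr)$, which is stronger than what is claimed.
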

For a proof of~(\ref{conc_est_f}) and~(\ref{conc_mom_f}), we refer to Appendix~\(A\) of Alon and Spencer (2008).

The following Lemma collects relevant properties of the edge cost distribution used in our proofs of the main Theorems.
\begin{lemma}\label{lemma_hz} The following properties hold:\\
\((a)\) If there exists~\(z_0 > 0\) such that~\(J_c(z)\) is strictly increasing for all~\(z > z_0,\) then for any~\(z > z_0\) we have
\begin{equation}\label{inverse_cdf_equality}
\mathbb{P}\left(c(f) > J_c(z)\right) = \frac{1}{z}.
\end{equation}
\((b)\) If there exists~\(x_0 > 0\) such that~\(F_c^{(ct)}(x_0) > 0\) and~\(F_c^{(ct)}(x)\) is continuous for all~\(x \geq x_0,\) then~\(J_c(z)\) is strictly increasing for all~\(z > \frac{1}{F_c^{(ct)}(x_0)}.\)\\
\((c)\) If the cost~\(c(h)\) of the edge~\(h\) satisfies the scaling relation~(\ref{f_scale}) for some~\(s > 2,\) then there is a constant~\(D > 0\) not depending on~\(h\) such that~\(\mathbb{E}c^2(h) \leq D.\)
\end{lemma}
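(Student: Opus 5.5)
For part~\((b)\), first note that \(F_c^{(ct)}\) is nonincreasing and continuous on \([x_0,\infty)\), with \(F_c^{(ct)}(x)\to 0\) as \(x\to\infty\). Fix \(z > 1/F_c^{(ct)}(x_0)\). The set \(\{x>0: F_c^{(ct)}(x)\geq 1/z\}\) contains \(x_0\) and is bounded, because the ccdf tends to zero. By continuity on \([x_0,\infty)\) it is closed there, so the maximum in~(\ref{h_c_def}) is attained. Since \(F_c^{(ct)}(x_0) > 1/z\), the intermediate value theorem gives \(F_c^{(ct)}(J_c(z)) = 1/z\). To see this, suppose instead that \(F_c^{(ct)}(J_c(z)) > 1/z\). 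Continuity then yields points slightly to the right of \(J_c(z)\) that still lie in the set, contradicting maximality. Now take \(z_0 < z_1 < z_2\) with \(z_0 := 1/F_c^{(ct)}(x_0)\). Then \(F_c^{(ct)}(J_c(z_1)) = 1/z_1 > 1/z_2 = F_c^{(ct)}(J_c(z_2))\). Monotonicity of the ccdf forces \(J_c(z_1)\le J_c(z_2)\), and the inequality is strict because the two values differ. Hence \(J_c\) is strictly increasing on \((z_0,\infty)\).

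For part~\((a)\), the target is \(\mathbb{P}(c(f)>J_c(z)) = F_c^{(ct)}(J_c(z))\). The inequality \(\geq 1/z\) is immediate once the max in~(\ref{h_c_def}) is attained. To get attainment without assuming continuity, use right-continuity of the ccdf: the set \(\{x: F_c^{(ct)}(x)\ge 1/z\}\) is an interval \((0,x^*]\), closed on the right. For the reverse inequality, suppose \(F_c^{(ct)}(J_c(z)) = 1/z' > 1/z\) with \(z'<z\). Pick \(z''\in(\max(z',z_0),z)\). Since the ccdf at \(J_c(z)\) is at least \(1/z''\), the point \(J_c(z)\) belongs to the defining set for \(z''\), so \(J_c(z'')\geq J_c(z)\). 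This contradicts strict monotonicity on \((z_0,\infty)\). The one subtle point is handling \(z'\le z_0\); there I would take \(z''\) just above \(z_0\) and below \(z\), and the same contradiction applies.

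For part~\((c)\), interpret~(\ref{f_scale}) for the cost ccdf, so that \(F_c^{(ct)}(ax)\le C_0 a^{-s}F_c^{(ct)}(x)\) for \(a>1\) and \(x>x_0\). Write
\[\mathbb{E}c^2(h) = \int_0^\infty 2x\,F_c^{(ct)}(x)\,dx.\]
Bound the part over \([0,2x_0]\) by \(4x_0^2\). For \(x>2x_0\), apply the scaling relation with base point \(x_1 := 2x_0\) and \(a = x/x_1\). This gives \(F_c^{(ct)}(x)\le C_0 x_1^s x^{-s}\), so the tail integral is at most
\[2C_0x_1^s\int_{x_1}^\infty x^{1-s}\,dx = \frac{2C_0x_1^2}{s-2},\]
which is finite because \(s>2\). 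None of the constants depend on \(h\), since all edges share the same distribution. The only mildly delicate step is the left-closedness and attainment of the max in~\((a)\); everything else is routine.
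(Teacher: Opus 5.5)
Your part (a) has a genuine error in the direction \(F_c^{(ct)}(J_c(z))\ge 1/z\). You claim that right-continuity makes \(\{x: F_c^{(ct)}(x)\ge 1/z\}\) an interval \((0,x^*]\) closed on the right. This is false for \(F_c^{(ct)}(x)=\mathbb{P}(c(h)>x)\). Suppose \(c(h)\) has an atom at \(x^*\) with \(\mathbb{P}(c(h)>x^*)<1/z\le \mathbb{P}(c(h)\ge x^*)\), for example \(c(h)\equiv 1\) and any \(z>1\). Then the set is \((0,x^*)\), and the maximum is not attained.

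Right-continuity actually gives the opposite inequality. Every \(x>J_c(z)\) satisfies \(F_c^{(ct)}(x)<1/z\), so letting \(x\downarrow J_c(z)\) yields \(F_c^{(ct)}(J_c(z))\le 1/z\). This is how the paper obtains the upper bound. Your strict-monotonicity argument for \(\le\) (comparing with \(z''<z\)) is valid, but it is not needed.

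The lower bound is where the hypothesis of (a) is genuinely required, and it must be applied to the right of \(z\). Suppose \(F_c^{(ct)}(J_c(z))<1/z\), and pick \(z'>z\) with \(F_c^{(ct)}(J_c(z))<1/z'\). Since the ccdf is nonincreasing, no \(x\ge J_c(z)\) lies in the defining set for \(z'\). Hence \(J_c(z')\le J_c(z)\le J_c(z')\), which contradicts strict monotonicity. This is the paper's argument, with \(z'=z/(1-\varepsilon)\). It works with \(J_c\) read as a supremum, and as a by-product it shows that the maximum in~(\ref{h_c_def}) is attained for \(z>z_0\). In short, you have the two tools swapped: right-continuity gives \(\le\), and strict monotonicity gives \(\ge\).

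Parts (b) and (c) are correct. In (b) you rightly use continuity on \([x_0,\infty)\) to get attainment and \(F_c^{(ct)}(J_c(z))=1/z\), from which strict monotonicity follows at once. The paper instead argues by contradiction from \(J_c(z)=J_c(u)\), using the same ingredients. Part (c) is the paper's splitting of \(\int_0^\infty 2xF_c^{(ct)}(x)\,dx\). Your base point \(2x_0\) properly respects the strict requirement \(x>x_0\) in~(\ref{f_scale}).
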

Parts~\((a)\) and~\((b)\) describe sufficient conditions under which ccdf~\(F_c^{(ct)}(.)\) and the inverse ccdf~\(J_c(.)\) satisfy the inverse property and part~\((c)\) states that if the scaling condition  holds for~\(s > 2,\) then the edge weights have bounded second moments.

\emph{Proof of Lemma~\ref{lemma_hz}}: To prove~\((a),\) we let~\(z > z_0\) be arbitrary and  use the definition of~\(J_c(z)\) in~(\ref{h_c_def}) and the right continuity of the ccdf to get that~\[F_c^{(ct)}(J_c(z)) = \mathbb{P}\left(c(f) > J_c(z)\right) \leq \frac{1}{z}.\] For the converse direction, we use the fact that~\(J_c(z)\) is strictly increasing for all~\(z > z_0,\) as mentioned in Lemma statement. This necessarily implies that  for any~\(z > z_0\) and  any~\(\varepsilon > 0,\)  we must have~\(F_c^{(ct)}(J_c(z)) \geq \frac{1-\varepsilon}{z};\)  else we arrive at the contradictory relation~\[J_c(z) = J_c\left(\frac{z}{1-\varepsilon}\right).\] Combining the above, we get~(\ref{inverse_cdf_equality}) and this completes the proof of part~\((a)\) of the Lemma.

We prove part~\((b)\) by contradiction as follows.  Suppose there exists~\(u > z > z_0 := \frac{1}{F_c^{(ct)}(x_0)}\) such that~\(J_c(z) = J_c(u).\) Both~\(J_c(z)\) and~\(J_c(u)\) are necessarily at least~\(x_0\) and the right continuity of~\(F_c^{(ct)}\) further implies that~\[ F_c^{(ct)}(J_c(z)) = F_c^{(ct)}(J_c(u)) \leq \frac{1}{u}.\] But since~\(\frac{1}{u} < \frac{1}{z}\) strictly, this implies that~\( F_c^{(ct)}(J_c(z)) < \frac{1}{z}\) strictly and so  invoking the stronger continuity condition of the ccdf, we get that~\( F_c^{(ct)}((1-\eta)J_c(z)) < \frac{1}{z}\) strictly, for all small~\(\eta> 0.\) This contradicts the definition of inverse ccdf in~(\ref{h_c_def}) and so~\(J_c(z)\) is strictly increasing for all~\(z > z_0.\) This completes the proof of part~\((b)\) of the Lemma.

For the final part, we recall the constant~\(x_0\) in the statement of~(\ref{f_scale}) and write
\begin{equation}\label{w_split}
\mathbb{E}c^2(h)  = 2\int_{0}^{\infty}y \mathbb{P}\left(c(h) > y\right) dy = I_1 + I_2
\end{equation}
where
\begin{equation}\label{i_one_est}
I_1 := 2\int_{0}^{2x_0}y \mathbb{P}\left(c(h) > y\right) dy \leq 2\int_{0}^{2x_0} y = 4x_0^2
\end{equation}
and
\begin{equation}
I_2 :=  2\int_{2x_0}^{\infty}y \mathbb{P}\left(c(h) > y\right) dy = 2x_0^2\int_{2}^{\infty} a\mathbb{P}\left(c(h) > ax_0\right) da, \label{i_two_est_ax}
\end{equation}
by a change of variable~\(y = ax_0.\)

From~(\ref{f_scale}), we get for~\(a > 1\) that
\begin{equation}\nonumber
\mathbb{P}\left(c(h) \geq ax_0\right) \leq \frac{C_0}{a^s} \mathbb{P}(c(h) \geq x_0) \leq \frac{C_0}{a^s}
\end{equation}
and substituting this into~(\ref{i_two_est_ax}) and using the fact that~\(s > 2\) strictly, we then get that
\begin{equation}\label{i_two_est}
I_2 \leq 2x_0^2 \int_{2}^{\infty} \frac{C_0}{a^{s-1}} da =  \frac{C_0x_0}{2^{s-3}} < \infty.
\end{equation}
Combining~(\ref{i_two_est}) with~(\ref{i_one_est}), we get then get from~(\ref{w_split}) that the edge weights have bounded second moments. This completes the proof of the Lemma.~\(\qed\)

Let~\({\cal B} \subset \{1,2,\ldots,n\}\) be any deterministic set containing~\(b \geq 0\) vertices, where~\(b\) is a constant and let~\(G({\cal B}) \subset G\) be the subgraph of~\(G,\) obtained after removing the vertices of~\({\cal B}.\)  If~\(d_{{\cal B}}(v)\) and~\(d(v)\) respectively denote the degree of vertex~\(v\) in~\(G({\cal B})\) and~\(G,\) then clearly~\(d_{{\cal B}}(v) \leq d(v).\)  The following Lemma collects vertex degree, connectivity and edge weight properties of~\(G({\cal B}),\) used in our proof of the main Theorems.  Define
\begin{equation}\label{p_low_up}
p_{low} := \min_{1 \leq v \leq n}\frac{1}{n-1}\sum_{u \neq v} p(u,v)\;\;\text{ and }\;\; p_{up} := \max_{1 \leq v \leq n} \frac{1}{n-1}\sum_{u \neq v} p(u,v),
\end{equation}
to be the minimum and maximum possible values of the average edge probability per vertex, that possibly depend on~\(n.\)
\begin{lemma}\label{lemma_deg}
\((a)\) Suppose there are constants~\(A_0,B_0 > 0\) and~\(0 < p = p(n)  <1\) such that
\begin{equation}\label{p_cond_max_ax}
A_0p \leq p_{low}  \leq p_{up}  \leq B_0 p.
\end{equation}  If~\(p \geq \frac{M\log{n}}{n}\) for a large enough constant~\(M,\) then
\begin{equation}\label{e_deg_est_max}
\mathbb{P}\left(E_{deg}({\cal B})\right) \geq  1-\exp\left(-Dnp\right)
\end{equation}
for all~\(n\) large and some constant~\(D > 0,\) where
\begin{equation}\label{e_deg_def_ax2}
E_{deg}({\cal B})  := \bigcap_{v=1}^{n} \left\{ \frac{3A_0np}{4} \leq d_{{\cal B}}(v) \leq d(v) \leq 2B_0np \right\}.
\end{equation}
\((b)\) If the condition~(\ref{p_cond_new}) in Theorem~\ref{thm_max_cst_low} holds, then~(\ref{p_cond_max_ax}) also holds with~\(A_0 = a_0\) and~\(B_0 = 2b_0.\) Moreover if~\(p \geq \frac{M\log{n}}{n}\) for a large enough constant~\(M,\) then
\begin{equation}\label{e_con_est_max}
\mathbb{P}\left(E_{con}({\cal B})\right) \geq  1-\exp\left(-\frac{a_0np}{4}\right)
\end{equation}
for all~\(n\) large, where~\(a_0 >0\) is the constant in~(\ref{p_cond_new}) and~\(E_{con}({\cal B})\) is be the event that~\(G({\cal B})\) is connected.
\end{lemma}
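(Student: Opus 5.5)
Part~\((a)\) reduces to Chernoff bounds plus a union bound, and part~\((b)\) to the standard count of edges leaving a vertex set.

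\emph{Part (a).} Fix a vertex~\(v\). Its degree \(d_{{\cal B}}(v)\) in \(G({\cal B})\) is a sum of independent Bernoulli variables \(Z(u,v)\), \(u \notin {\cal B}\cup\{v\}\). The mean satisfies
\[\mathbb{E}d_{{\cal B}}(v) \geq (n-1)p_{low} - b \geq A_0 np - A_0 p - b,\]
which exceeds \(\frac{7}{8}A_0np\) for \(n\) large, because \(b\) is constant and \(np \geq M\log n \to \infty\). Also \(\mathbb{E}d(v) \leq (n-1)p_{up} \leq B_0np\).

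Apply Lemma~\ref{lemmax}(a) with a small constant \(\epsilon\), say \(\epsilon = 1/8\), for the lower tail of \(d_{{\cal B}}(v)\). For the upper tail of \(d(v)\), use \(\epsilon = 1/2\), padding with dummy Bernoulli variables if needed so that the mean is of order \(np\); this gives \(d(v) \leq 2B_0np\). Each failure probability is at most \(2e^{-cnp}\) for some constant \(c>0\).

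A union bound over the \(n\) vertices gives failure probability at most \(4n e^{-cnp}\). Since \(np \geq M\log n\), we have \(n = e^{\log n} \leq e^{np/M}\). Choosing \(M\) large so that \(1/M < c/2\), the bound becomes \(\leq e^{-Dnp}\) with \(D = c/4\), for \(n\) large. Note \(d_{{\cal B}}(v) \leq d(v)\) holds trivially.

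\emph{Part (b), first claim.} Taking \({\cal S}\) as the full vertex set minus \(u\) in (\ref{p_cond_new}) gives
\[a_0np \leq \sum_{v\neq u}p(u,v) \leq b_0np.\]
Dividing by \(n-1\) and using \(\frac{n}{n-1} \leq 2\) yields \(A_0 = a_0\) and \(B_0 = 2b_0\); the lower bound only improves since \(\frac{n}{n-1} \geq 1\).

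\emph{Part (b), connectivity.} If \(G({\cal B})\) is disconnected, some component \(L\) has size \(k \leq (n-b)/2\), and no edge of \(G\) joins \(L\) to \(V' \setminus L\), where \(V' = \{1,\ldots,n\}\setminus{\cal B}\). The set \({\cal S} = V' \setminus L\) has at least \((n-b)/2 \geq \gamma_0 n\) vertices, since \(\gamma_0 < 1/2\) and \(n\) is large. So (\ref{p_cond_new}) gives \(\sum_{w \in {\cal S}}p(u,w) \geq a_0np\) for every \(u \in L\).

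Hence the probability that a fixed \(L\) is isolated is
\[\prod_{u\in L}\prod_{w\in{\cal S}}(1-p(u,w)) \leq \exp(-k a_0 np).\]
Summing over sets of size \(k\), the total is at most
\[\sum_{k\ge1}\binom{n}{k}e^{-ka_0np} \leq \sum_{k\ge1}\left(ne^{-a_0np}\right)^k \leq 2ne^{-a_0np}.\]
With \(np\geq M\log n\) and \(M\) large, this is at most \(e^{-a_0np/4}\), because \(2n \leq e^{3a_0np/4}\).

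The only delicate point is absorbing the factor \(n\) (and \(\binom{n}{k}\)) into the exponential. This works because \(np\) dominates \(\log n\) by the large factor \(M\). The requirement that \({\cal S}\) have at least \(\gamma_0 n\) vertices is exactly why only components of size at most half need to be considered.
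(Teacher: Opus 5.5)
Your proof is correct and takes essentially the same route as the paper. For part \((a)\) you use Lemma~\ref{lemmax}\((a)\) plus a union bound over the \(n\) vertices; for part \((b)\) you take a union bound over candidate isolated components of size at most \(\frac{n-b}{2}\) (whose complement in \(G({\cal B})\) has at least \(\gamma_0 n\) vertices), then use \({n \choose k} \leq n^k\) and a geometric series absorbed by \(np \geq M\log{n}\). The padding you mention for the upper tail is not needed: \(\mathbb{E}d(v) \geq A_0(n-1)p\) already, so applying the estimate with \(\epsilon = \frac{1}{2}\) directly gives \(d(v) \leq \frac{3}{2}B_0np\) with failure probability at most \(2\exp\left(-\frac{A_0(n-1)p}{16}\right)\).
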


\emph{Proof of Lemma~\ref{lemma_deg}}:  From~(\ref{p_low_up}), we know that  the sum of the edge probabilities of the vertex~\(v\) satisfies~\[ A_0 (n-1)p \leq (n-1)p_{low} \leq \sum_{u \neq v} p(u,v) \leq (n-1)p_{up} \leq B_0 (n-1)p,\] by Theorem statement. This implies that
\[A_0(n-1)p - b \leq \mathbb{E}d_{{\cal B}}(v) \leq \mathbb{E}d(v) \leq B_0(n-1)p\] and since~\(p \geq \frac{M\log{n}}{n}\) by Theorem statement, we have that
\[A_0(n-1)p - b \geq \frac{7A_0np}{8}\] for all~\(n\) large. Therefore applying the deviation estimate~(\ref{conc_est_f})  we get that
\begin{equation}\nonumber
\mathbb{P}\left(\frac{3A_0np}{4} \leq d_{{\cal B}}(v) \leq d(v) \leq 2B_0np\right) \geq 1-2\exp\left(-2Dnp\right)
\end{equation}
for some constant~\(D > 0,\) not depending on the choice of~\(v\) or the constant~\(M\) in Theorem statement.  Recalling the event~\(E_{deg}(.)\) defined in~(\ref{e_deg_def_ax2}), we then get by an application of the union bound that
\begin{equation} \label{trista}
\mathbb{P}\left(E_{deg}({\cal B})\right) \geq  1-2n\exp\left(-2D np\right).
\end{equation}
Since~\(p \geq \frac{M\log{n}}{n}\) we choose the constant~\(M > 0\) large enough so that~(\ref{e_deg_est_max}) holds for all~\(n\) large.
This completes the proof  of  part~\((a)\) of the Lemma.

Clearly if~(\ref{p_cond_new}) holds, then for any vertex~\(u,\) we have that
\[a_0p \leq \frac{a_0np}{n-1} \leq p_{low} \leq p_{up} \leq \frac{b_0np}{n-1} \leq 2b_0p,\] for all~\(n\) large. Thus~(\ref{p_cond_max_ax}) holds with~\(A_0 = a_0\) and~\(B_0 = 2b_0.\)

To prove the connectivity estimate~(\ref{e_con_est_max}), we use an analogous argument as in the proof of Theorem~\(7.3,\) pp.~\(164-165,\) in~\cite{boll}. For a deterministic set~\({\cal S} \subset \{1,2,\ldots,n\} \setminus {\cal B},\) of vertices, let~\(E_{cross}({\cal S})\) be the event that no edge having one endvertex in~\({\cal S}\) and the other endvertex in~\({\cal S}^c,\) is present in~\(G.\) If~\(G({\cal B})\) is disconnected, then there is necessarily a component~\({\cal C}\) in~\(G_{rem}({\cal B})\) with vertex set~\({\cal V},\) satisfying:\\
\((a)\)~\({\cal V}\) has~\(r \leq \frac{n-b}{2}\) vertices\\
\((b)\)~\(E_{cross}({\cal V})\) occurs.\\
In other words, recalling that~\(E_{con}({\cal B})\) denotes the event that~\(G({\cal B})\) is connected, we have that
\begin{equation}\label{gb_con}
E^c_{con}({\cal B}) \subseteq \bigcup_{{\cal S}}E_{cross}({\cal S}),
\end{equation}
where the union is over all deterministic sets~\({\cal S} \subset\{1,2,\ldots,n\} \setminus {\cal B},\) containing at most~\(\frac{n-b}{2}\) vertices.

For a given~\({\cal S}\) containing~\(r\) vertices, we define~\({\cal S}^c := \{1,2,\ldots,n\} \setminus {\cal S}\) and deduce that the event~\(E_{cross}({\cal S})\) occurs with probability
\begin{align}
\mathbb{P}\left(E_{cross}({\cal S})\right) &= \prod_{u \in {\cal S}} \prod_{v \in {\cal S}^c \setminus {\cal B}} \left(1-p(u,v)\right) \nonumber\\
&\leq \exp\left(-\sum_{u \in {\cal S}} \sum_{v \in {\cal S}^c \setminus {\cal B}} p(u,v)\right). \label{e_cross_est}
\end{align}
We know~\({\cal S}^c \setminus {\cal B}\) contains at least~\(\frac{n-b}{2} \geq \gamma_0n\) vertices for all~\(n\) large, where~\(0 < \gamma_0 <\frac{1}{2}\) is as in the condition~(\ref{p_cond_new}). Therefore
\[\sum_{v \in {\cal S}^c}p(u,v) \geq a_0np\] where~\(a_0 > 0\) is the constant in~(\ref{p_cond_new}) and
\begin{align}
\mathbb{P}\left(E_{cross}({\cal S})\right) &\leq \exp\left(- \sum_{u \in {\cal S}} a_0 np\right) \nonumber\\
&= \exp\left(-a_0npr \right), \label{e_cross_est2}
\end{align}
since~\({\cal S}\) has~\(r\) vertices.

The number of choices for~\({\cal S}\) is~\[ {n-b \choose r} \leq {n \choose r} \leq n^{r}\] and so the relation~(\ref{e_cross_est2}) together with the union bound implies that
\begin{equation}\label{e_con_gen_tits}
\mathbb{P}\left(E_{con}^c({\cal B})\right) \leq \sum_{r=1}^{(n-b)/2} n^{r} \cdot \exp\left(-a_0npr \right).
\end{equation}
We now set~\(p \geq \frac{M\log{n}}{n}\) and choose the constant~\(M\) larger if necessary, so that
\[n \cdot \exp\left(-a_0np \right) \leq \exp\left(-\frac{a_0np}{2}\right).\] With this choice of~\(M,\) we get from~(\ref{e_con_gen_tits}) that
\begin{align}
\mathbb{P}\left(E_{con}^c({\cal B})\right) &\leq \sum_{r=1}^{n/2} \exp\left(-\frac{a_0npr}{2}\right) \nonumber\\
&\leq \sum_{r \geq 1} \exp\left(-\frac{a_0npr}{2}\right) \nonumber\\
&= \frac{\exp\left(-a_0np/2\right)}{1-\exp\left(-a_0np/2\right)} \nonumber.
\end{align}
for all~\(n\) large. Since~\(np \geq M\log{n},\) this obtains~(\ref{e_con_est_max}) and therefore completes the proof of the Lemma.~\(\qed\)

\renewcommand{\theequation}{\thesection.\arabic{equation}}
\setcounter{equation}{0}
\section{Proof of Theorem~\ref{thm_max_cst_low}}\label{sec_pf_max_low}
We first assume the following weaker conditions to obtain a ``quasi" lower deviation estimate for~\(\chi_n.\) Specifically, suppose:\\
\((p1)\) The condition~(\ref{p_cond_max_ax}) described in Lemma~\ref{lemma_deg}\((a)\) holds for some constants~\(A_0,B_0 > 0.\)\\
\((p2)\) The edge weight ccdf~\(F^{(wt)}_c(x)\) is continuous for all large~\(x.\)\\
We show below that  there are constants~\(M,\lambda > 0\) such that if~\(p \geq \frac{M \log{n}}{n},\) then
\begin{equation}\label{dev_bound_low_mast_weak}
\mathbb{P}\left(E_{con} \bigcap \left\{\chi_n \geq  \lambda n \cdot \mu_m H_c(np) \right\} \right) \geq 1- e^{-\lambda np} -\frac{\sigma_m^2}{\lambda n\mu_m^2} - \mathbb{P}(E^c_{con}),
\end{equation}
where~\(\mu_m := \mathbb{E}r(X_1,X_2)\) and~\(\sigma_m^2 := var(r(X_1,X_2)).\)

For a set of vertices~\({\cal B},\) we recall the event~\(E_{deg}({\cal B})\) defined in~(\ref{e_deg_def_ax2}) and assume henceforth that~\(E_{deg} := E_{deg}(\emptyset)\) occurs. The first step in our proof is to estimate  the number of edges having ``sufficiently large" weight. Formally, say that an edge~\(f\) of the complete graph~\(K_n\) is \emph{heavy} if its weight~\(W(f) > H_c(np),\) where~\(H_c(.)\) is the inverse edge weight ccdf as described in the statement following~(\ref{h_c_def}). Let~\(G_{heavy}  \subset G\) be the subgraph of~\(G\) formed by heavy edges. We estimate the number~\(N_{heavy}\) of edges in~\(G_{heavy}\) as follows. If~\(E_{deg}\) occurs, then each vertex in~\(G\) has degree at least~\(\frac{3A_0np}{4}\) and by a standard handshaking argument, we know that the sum of degrees of vertices in any graph is equal to twice the number of edges. Thus the number of edges in~\(G\) is at least~\[\frac{1}{2} \cdot n \cdot \frac{3A_0np}{4} = \frac{3A_0n^2p}{8}.\]

Since~\(F^{(wt)}_c(x)\) is continuous for all large~\(x,\) we invoke parts~\((a)\) and~\((b)\) in Lemma~\ref{lemma_hz} to get that
\begin{equation}\label{heavy_edge}
\mathbb{P}\left(f \text{ is heavy}\right) = \frac{1}{np}.
\end{equation}
Thus each edge in~\(G\) is independently heavy with probability at least~\(\frac{1}{np}\)  and so~\(N_{heavy}\)  is stochastically dominated from below by a Binomial random variable with parameters~\(\frac{3n^2A_0p}{8}\) and~\(\frac{1}{np}.\) Consequently, the deviation estimate~(\ref{conc_est_f}) implies that
\[\mathbb{P}\left(N_{heavy} \geq \frac{3A_0n}{16} \,\middle\vert\, E_{deg} \right) \geq 1- \exp\left(-C n\right)\]
for some constant~\(C > 0.\) Combining this with the probability estimate~(\ref{e_deg_est_max}) for~\(E_{deg},\) we get
\begin{align}\label{n_heavy_est}
\mathbb{P}\left(\left\{N_{heavy} \geq \frac{3A_0n}{16}\right\} \bigcap E_{deg}  \right) &\geq \left(1-e^{-Cn}\right) \mathbb{P}(E_{deg}) \nonumber\\
&\geq \left(1-e^{-Cn}\right) \left(1-e^{-Dnp}\right) \nonumber\\
&\geq 1-2e^{-Dnp}
\end{align}
for all~\(n\) large, since~\(np \geq M\log{n}\) by Theorem statement.

For future use, we also obtain an upper bound for the \emph{sum} of vertex degrees in~\(G_{heavy}.\) Indeed, let~\({\cal V} \subset \{1,2,\ldots,n\}\) be any fixed set of~\(V\) vertices and let~\(d_{heavy}(u)\) be the degree of vertex~\(u\) in~\(G_{heavy}.\) A standard hand shaking argument implies that
\begin{equation}\label{grand_prix_one}
\sum_{u \in {\cal V}} d_{heavy}(u) = 2N_{heavy}({\cal V}) + N_{heavy}({\cal V}, {\cal V}^c),
\end{equation}
where~\(N_{heavy}({\cal V})\) is the number of heavy edges containing both endvertices in~\({\cal V}\) and~\(N_{heavy}({\cal V}, {\cal V}^c)\) is the number of heavy edges containing one endvertex in~\({\cal V}\) and the other endvertex in~\({\cal V}^c.\) The random variables~\(N_{heavy}({\cal V})\) and~\(N_{heavy}({\cal V}, {\cal V}^c)\) are independent and so we get from~(\ref{grand_prix_one}) that
\begin{equation}\label{grand_prix_two}
\mathbb{E}\exp\left(\sum_{u \in {\cal V}} d_{heavy}(u)\right) = \mathbb{E}\exp\left(2N_{heavy}({\cal V})\right)\mathbb{E}\left(N_{heavy}({\cal V}, {\cal V}^c)\right).
\end{equation}

To evaluate the right hand expression of~(\ref{grand_prix_two}), we introduce a couple of notations. Recalling from~(\ref{x_dist}) that~\(Z(u,v)\) is the state of the edge~\((u,v)\) in~\(G,\) we let \[Z_{heavy}(u,v) := Z(u,v) \ind(W(u,v) > J_c(np))\] denote the state of~\((u,v)\) in~\(G_{heavy},\) where~\(\ind(.)\) refers to the indicator function. From~(\ref{x_dist}) we also see that~\(Z(u,v)=1\) with probability~\(p(u,v)\) and so we get from~(\ref{heavy_edge}) that~\((u,v)\) is present in~\(G_{heavy}\) with probability~\[q(u,v) := p(u,v) \cdot \frac{1}{np}.\] In other words,
\[\mathbb{P}\left(Z_{heavy}(u,v) = 1\right) = q(u,v) = 1-\mathbb{P}\left(Z_{heavy}(u,v) = 0\right)\]
and with the above notations, we also get that
\[N_{heavy}({\cal V}, {\cal V}^c)  = \sum_{u \in {\cal V}} \sum_{v \in {\cal V}^c} Z_{heavy}(u,v)\text{ and } N_{heavy}({\cal V}, {\cal V}) = \sum_{(u,v)} Z_{heavy}(u,v),\]
where the final summation is over all edges of the complete graph~\(K_n,\) having both endvertices in~\({\cal V}.\)

Thus
\begin{align}
\mathbb{E}\exp\left(2N_{heavy}({\cal V})\right) &= \mathbb{E}\exp\left(2\sum_{(u,v)}Z_{heavy}(u,v)\right) \nonumber\\
&= \prod_{(u,v)} \mathbb{E}\exp\left(2Z_{heavy}(u,v)\right) \nonumber\\
&= \prod_{(u,v)}  \left(1- q(u,v) + e^2 q(u,v)\right) \nonumber\\
&\leq \prod_{(u,v)} \exp\left((e^2-1)q(u,v)\right) \nonumber\\
&= \exp\left((e^2-1) \sum_{(u,v)} q(u,v)\right) \nonumber\\
&= \exp\left(\frac{e^2-1}{np} \sum_{(u,v)} p(u,v)\right) \nonumber\\
&\leq \exp\left(\frac{e^2-1}{np} \sum_{u \in {\cal V}} \sum_{v \in {\cal V} \setminus \{u\}} p(u,v)\right). \label{n_heavy_vv_est}
\end{align}
Arguing similarly for~\(N_{heavy}({\cal V}, {\cal V}^c)\) we get an analogous estimate but with~\(e^2-1\) replaced by~\(e-1;\) i.e.,
\begin{equation}\label{n_heavy_vv2_est}
\mathbb{E}\exp\left(N_{heavy}({\cal V}, {\cal V}^c)\right) \leq \exp\left(\frac{e-1}{np} \sum_{u \in {\cal V}} \sum_{v \in {\cal V}^c}q(u,v)\right).
\end{equation}

We have that~\(\frac{e-1}{np} \leq \frac{e^2-1}{np}\) and so combining the generating function estimates~(\ref{n_heavy_vv2_est}) with~(\ref{n_heavy_vv_est}) and recalling that~\({\cal V}\) contains~\(V\) vertices, we see that
\begin{align}
\mathbb{E}\exp\left(2N_{heavy}({\cal V}, {\cal V})\right)\mathbb{E}\exp\left(N_{heavy}({\cal V}, {\cal V}^c)\right) &\leq \exp\left(\frac{e^2-1}{np} \sum_{ u \in {\cal V}} \sum_{v \neq u} p(u,v)\right) \nonumber\\
&\leq \exp\left(\frac{e^2-1}{np} \sum_{u \in {\cal V}}(n-1)p_{up} \right) \nonumber\\
&= \exp\left(\frac{e^2-1}{np} (n-1)p_{up} V\right) \nonumber\\
&\leq \exp\left(\frac{e^2-1}{p}p_{up}V\right) \nonumber\\
&\leq \exp\left( (e^2-1)B_0V\right), \label{e_n_heavy_est}
\end{align}
where the second  inequality in~(\ref{e_n_heavy_est}) follows from the definition of~\(p_{up}\) in~(\ref{p_low_up}) and the final estimate in~(\ref{e_n_heavy_est}) is a consequence of the fact that~\(p_{up} \leq B_0 p\) for some constant~\(B_0 > 0,\) by Theorem statement.

Plugging~(\ref{e_n_heavy_est}) into~(\ref{grand_prix_two})  we have that
\begin{equation}
\mathbb{E}\exp\left(\sum_{u \in {\cal V}} d_{heavy}(u) \right) \leq  \exp\left((e^2-1)B_0V\right)\nonumber
\end{equation}
and using the  standard Chernoff bound we get for~\(x \geq 0\) that
\begin{equation}\nonumber
\mathbb{P}\left(\sum_{u \in {\cal V}} d_{heavy}(u) \geq x \right) \leq e^{-x} \exp\left((e^2-1)B_0V\right).
\end{equation}
Setting~\(x = 10\zeta B_0V,\) where~\(\zeta \geq 1\) is a constant to be determined later, we obtain
\begin{align}
\mathbb{P}\left(\sum_{u \in {\cal V}} d_{heavy}(u) \geq 10\zeta B_0V \right) &\leq e^{-10\zeta B_0V} \exp\left((e^2-1)B_0V\right) \nonumber\\
&\leq e^{-10\zeta B_0V} e^{8B_0V} \nonumber\\
&\leq e^{-10\zeta B_0V} e^{8\zeta B_0V} \nonumber\\
&= e^{-2\zeta B_0 V} \label{grand_prix_5}
\end{align}
where the second relation in~(\ref{grand_prix_5}) is true since~\(e^2-1 < 8\) and the third estimate in~(\ref{grand_prix_5}) follows since~\(\zeta \geq 1\) by choice.

Finally, defining
\[E_{heavy} := \bigcap_{\cal V} \left\{\sum_{u \in {\cal V}} d_{heavy}(u) \leq 2\zeta B_0V\right\}\]
where the intersection is over all sets~\({\cal V}\) containing~\(V\) vertices, we get
\begin{equation}\label{talsqax}
\mathbb{P}\left(E^c_{heavy}\right) \leq {n \choose V} \cdot e^{-2\zeta B_0V} \leq \left(\frac{ne^{1-2\zeta B_0}}{V}\right)^{V},
\end{equation}
where the first inequality in~(\ref{talsqax}) follows from the union bound and the final estimate in~(\ref{talsqax}) is true since~\({a \choose b} \leq \left(\frac{ae}{b}\right)^{b}.\)  Letting~\(0 < \psi < \frac{1}{2}\) be a small constant to be determined later, we now set
\begin{equation}\label{d_choice_max}
V = \psi n\;\;\;\;\text{ and }\;\;\;\;\zeta  =\zeta(\psi) := \frac{1}{8B_0}\log\left(\frac{e}{\psi^2}\right),
\end{equation}
so that~\[\frac{n e^{1-2\zeta B_0}}{V} = \frac{ne\psi^2}{\psi n e} = \psi .\] With these choices, we get from~(\ref{talsqax}) that
\begin{equation}\label{e_heavy_appa_est}
\mathbb{P}\left(E^c_{heavy}\right) \leq \psi^{V} = \exp\left(- D_0 n\right),
\end{equation}
where~\(D_0 = D_0(\psi) := \psi\log\left(\frac{1}{\psi}\right) >0\) is a constant.

Recalling that~\(E_{con}\) denotes the event that~\(G\) is connected and setting
\[E_{net} :=  E_{con} \bigcap E_{heavy} \bigcap \left\{N_{heavy} \geq \frac{3A_0n}{16}\right\} \bigcap E_{deg},\] we apply the union bound and get from the respective probability estimates~(\ref{e_heavy_appa_est}) and~(\ref{n_heavy_est}) that
\begin{align}\label{e_net_est_max}
\mathbb{P}(E_{net}) &\geq 1-\mathbb{P}(E^c_{con}) - 3e^{-D np} - \exp\left(-D_0 n\right) \nonumber\\
&\geq 1-4e^{-D np}-  \mathbb{P}(E^c_{con}),
\end{align}
for all~\(n\) large, where we recall that~\(D > 0\) is the constant in~(\ref{n_heavy_est}) and the final estimate in~(\ref{e_net_est_max}) is true since~\(p \geq \frac{M\log{n}}{n},\) by Theorem statement.

Assuming~\(E_{net}\) occurs, we now estimate the maximum weight~\(\chi_n\) of a spanning tree of~\(G\) as follows. Let~\({\cal S}_{heavy}\) be the set of all heavy edges in~\(G.\) Since~\(E_{tot}\) occurs, the graph~\(G\) is connected and there are at least~\(\frac{3A_0n}{16}\)  edges in~\({\cal S}_{heavy}.\) We  use the occurrence of the event~\(E_{heavy}\) to iteratively extract a matching (i.e., a vertex disjoint set of edges) of size comparable to~\(n\) from~\({\cal S}_{heavy}\) as described below.

Set~\({\cal S}_1 := {\cal S}_{heavy}\) and pick a heavy edge~\(h_1 \in {\cal S}_1\) with endvertices~\(c_1\) and~\(d_1.\) Define~\({\cal M}_1 := \{h_1\}\) be the matching obtained at the end of the first iteration. Letting~\({\cal E}_1\) be the set of all heavy edges containing either~\(c_1\) or~\(d_1\) as an endvertex, we then set~\[{\cal S}_2 := {\cal S}_1 \setminus {\cal E}_1.\]   Repeating the above procedure, we pick a heavy edge~\(h_2 \in {\cal S}_2\) and set~\[{\cal M}_2 := \{h_1,h_2\}\] be the matching obtained at the end of the second iteration. As before, throw away all edges from~\({\cal S}_2\) that share an endvertex with~\(h_2\) and call the resulting set as~\({\cal S}_3.\) Continue this process until we reach a step~\(L\) such that~\({\cal S}_{L+1} = \emptyset.\)

By construction, the set~\({\cal M}_L\) of edges forms a matching of size~\(L.\) To estimate~\(L,\) we use the fact that~\(E_{heavy}\) occurs, where~\(0 < \psi < \frac{1}{2}\) is a constant. Indeed, there are~\(L\) edges in~\({\cal M}_L\) and so the total number of endvertices of the edges in~\({\cal M}_L\) is~\(2L.\) Since~\(E_{heavy}\) occurs, the sum of degrees of any~\(\psi n\) vertices in the random graph~\(G_{heavy}\) is at most~\[10\zeta \psi n = n \cdot \frac{5\psi}{4B_0}\log\left(\frac{e}{\psi^2}\right),\] by our choice of~\(\zeta\) in~(\ref{d_choice_max}).

Since~\(\psi \log{\psi} \rightarrow 0\) as~\(\psi \rightarrow 0,\) we choose~\(\psi\) small enough so that~\(10\zeta\psi < \frac{3A_0}{17}.\) With this choice of~\(\psi,\) we see that after~\(\frac{\psi n}{2}\) steps of the iteration process above, we have removed at most~\(\frac{3A_0n}{17}\) edges from~\({\cal S}_{heavy}.\) But since~\({\cal S}_{heavy}\) contains at least~\(\frac{3A_0n}{16}\) edges to begin with, we see that~\(L \geq \frac{\psi n}{2}\) and so the matching~\({\cal M}_L\) contains at least~\(\frac{\psi n}{2}\) edges.

%By choice, each edge in~\(G_{heavy}\) has Euclidean length at least~\(\varepsilon\) and weight at least~\(J_c(np)\) and so the total \emph{cost} of heavy edges in~\({\cal M}_L\) is at least
%\begin{equation}\label{heavy_weight}
%\frac{\psi n}{2} \cdot \varepsilon^{\alpha} \cdot J_c(np) = \theta_0\cdot nJ_c(np),
%\end{equation}
%for some constant~\(\theta_0 > 0.\)

Using the fact that~\(G\) is connected, we now iteratively connect the heavy edges in~\({\cal M}_L\) together to obtain a spanning tree of~\(G,\) as follows. Indeed, let~\({\cal M}_L := \{h_1,\ldots,h_L\}\)  and set~\(\Gamma_1 := \{h_1\}.\) For~\(i \geq 2,\) let~\({\cal Q}_{i-1} \subset {\cal M}_L\) be the set of all edges of~\({\cal M}_L\) present in~\(\Gamma_{i-1}\) and assume that~\(\Gamma_{i-1}\) satisfies the following properties:\\
\((q1)\)~\(\Gamma_{i-1} \subset G\) is a tree, \\
\((q2)\) there are exactly~\(i-1\) edges in~\({\cal Q}_{i-1}\) and\\
\((q3)\) no remaining edge of~\({\cal R}_{i-1} := {\cal M}_{L} \setminus {\cal Q}_{i-1}\) shares an endvertex with~\(\Gamma_{i-1}.\)\\
The graph~\(\Gamma_1\) satisfies~\((q1)-(q3).\)

\begin{figure}[tbp]
\centering
%\fbox{
\includegraphics[width=3.5in, trim= 0 550 150 50, clip=true]{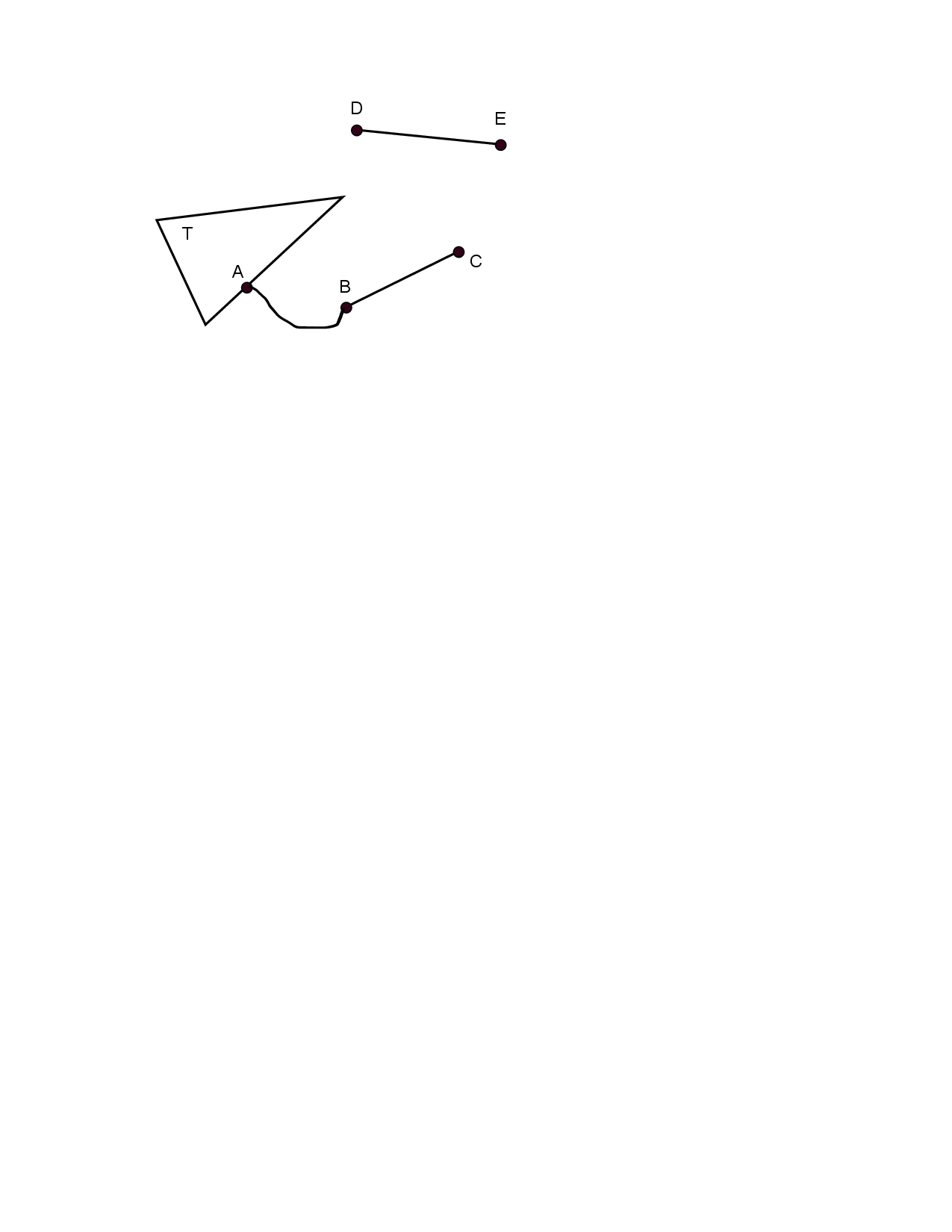}
%}
\caption{The tree~\(\Gamma_i\) is the union of the tree~\(\Gamma_{i-1} = T,\) the path~\({\cal P}_{i-1} = AB\) and the edge~\(e_i = BC.\)}
\label{fig_merge}
\end{figure}

Since~\(G\) is connected, there is a path~\({\cal P}_i\) containing at least one edge, from some vertex of~\(\Gamma_{i-1}\) to an endvertex of an edge~\(e_i \in {\cal R}_{i-1}\) that contains no other endvertex of~\({\cal R}_{i-1}.\) The graph
\[\Gamma_i := \Gamma_{i-1} \bigcup {\cal P}_{i} \bigcup \{e_i\}\] is a tree of~\(G,\)  contains~\(i\)  edges from~\({\cal M}_L\) and does not contain an endvertex of any edge from~\({\cal R}_{i-1} \setminus \{e_i\}.\) Thus~\(\Gamma_i\) satisfies properties~\((q1)-(q3)\) above and this completes the induction step. This is illustrated in Figure~\ref{fig_merge} where~\(\Gamma_{i-1}\) is represented by the triangle~\(T\) and~\({\cal R}_{i-1}\) consists of the two edges~\(BC\) and~\(DE.\)  The edge~\(e_{i} = BC\) is connected to some vertex~\(A\) in~\(\Gamma_{i-1}\) by the path~\({\cal P}_i\) represented by the wavy line, that contains no other endvertex of an edge in~\({\cal R}_{i-1}.\) Proceeding iteratively, we obtain a tree~\(\Gamma_{L} \subset G\) that contains all the~\(L\) edges of~\({\cal M}_{L}.\) Further adding more edges to~\(\Gamma_L\) if necessary, we then obtain a spanning tree~\({\cal T}_{fin}\) of~\(G\) that contains~\(\Gamma_L\) as a subgraph.

To estimate the total cost of the edges in~\({\cal T}_{fin},\) we use the fact that the marks of the endvertices of the~\(L \geq \frac{\psi n}{2}\) heavy edges~\(\{(c_i,d_i)\}_{1 \leq i \leq L}\) in the matching~\({\cal M}_L\) obtained above, are \emph{independent}. Therefore the corresponding cost factors~\(\{r(X_{c_i},X_{d_i})\}_{1 \leq i \leq L}\) defined in~(\ref{cost_def}), are i.i.d. Because~\(E_{net}\) occurs, we have that~\(L \geq \frac{\psi n}{2}\) and so defining
\[U_{fin} := \sum_{i=1}^{L} r(X_{c_i},X_{d_i}),\] we apply the Chebychev inequality to get for~\(\epsilon > 0\) that
\begin{equation}\label{azuma}
\mathbb{P}\left(U_{fin} \leq (1-\epsilon)\mathbb{E}(U_{fin}\mid E_{net}) \mid E_{net} \right) \leq \frac{D\sigma_{m}^2}{n \mu_{m}^2,}
\end{equation}
for some constant~\(D > 0,\) where~\( \mu_m = \mathbb{E}r(X_1,X_2)\) and~\(\sigma_m^2 = var(r(X_1,X_2))\) respectively denote the mean and variance of~\(r(X_1,X_2),\) as mentioned in the Theorem statement.

Again using the fact that~\(E_{net}\) occurs, we have that~\(L \geq \frac{\psi n}{2}\) and so~\[\mathbb{E}(U_{fin}\mid E_{net}) = L \cdot \mu_{m} \geq \frac{\psi n}{2} \cdot \mu_{m}.\] Therefore choosing~\(\epsilon  =\frac{1}{2}\) for example,   we get from~(\ref{azuma}) that
\[\mathbb{P}\left(U_{fin} \geq c_1 n \mu_{m} \,\middle\vert\, E_{net}\right) \geq 1-\frac{c_2\sigma_m^2}{n\mu_m^2},\] for some constants~\(c_1,c_2 > 0.\)  Combining with the estimate~(\ref{e_net_est_max}) for the event~\(E_{net},\) we then get that
\begin{align}
\mathbb{P}\left(\left\{U_{fin} \geq c_1 n \mu_{tot} \right\} \bigcap E_{net}\right) &\geq \left(1-\frac{c_2\sigma_m^2}{n\mu_m^2}\right) \left(1-4e^{-D np}-  \mathbb{P}(E^c_{con})\right) \nonumber\\
&\geq 1-4e^{-D np} - \frac{c_2\sigma_m^2}{n\mu_m^2} - \mathbb{P}(E^c_{con}),\label{e_net_est_max2}
\end{align}
for all~\(n\) large, where we recall from~(\ref{e_net_est_max}) that~\(D > 0\) is a constant.

By definition every heavy edge has weight at least~\(H_c(np)\) and if~\[\left\{U_{fin} \geq c_1 n \mu_{m}\right\} \bigcap E_{net}\] occurs, then the total cost of the edges in the matching~\({\cal M}_L\) is at least \[U_{fin} \cdot J_c(np) \geq  c_1 n\mu_m \cdot H_c(np).\] This  in turn implies that the cost of the spanning tree~\({\cal T}_{fin}\) is at least~\(c_1 n \mu_m H_c(np)\) and so the maximum cost~\(\chi_n\) of a spanning tree of~\(G\) satisfies \begin{equation}\label{chill_ammal}
\chi_n \geq c_1 n \mu_m H_c(np).
\end{equation} The probability estimate~(\ref{e_net_est_max2}) obtains the desired deviation lower bound~(\ref{dev_bound_low_mast_weak})  for~\(\chi_n.\)

We now invoke the connectivity estimate proved in Lemma~\ref{lemma_deg}\((b)\) under the stronger condition~(\ref{p_cond_new}) and the fact that the cost factors have bounded first and second moments in the sense of~(\ref{cost_fact_cond}), to obtain that if~\(p \geq \frac{M\log{n}}{n},\) then
\begin{equation}\label{dev_bound_low_mast_axxxx}
\mathbb{P}\left(E_{con} \bigcap \left\{\chi_n \geq \lambda_1 n H_c(np) \right\} \right) \geq 1- \frac{1}{\lambda_1 n},
\end{equation}
for some constant~\(\lambda_1 > 0.\) Since~\(np \rightarrow \infty\) by Theorem statement, this obtains the desired lower bound for~\(\chi_n\) in the estimate~(\ref{dev_bound_mast_low}). This completes the proof of part~\((a)\) of the Theorem.~\(\qed\)

\renewcommand{\theequation}{\thesection.\arabic{equation}}
\setcounter{equation}{0}
\section{Proof of Theorem~\ref{thm_max_cst_up}}\label{sec_pf_max_up}
We begin by obtaining  generic upper bounds for the maximum cost~\(\chi_n\) in terms of the edge cost inverse ccdf~\(J_c(.)\) defined in~(\ref{h_c_def}).
\begin{lemma}\label{lemma_max_cst_up} Suppose the condition~(\ref{p_cond_new}) in the statement of Theorem~\ref{thm_max_cst_low} holds. Also suppose there are constants~\(C_0,x_0 > 0\) and~\(s  > 3\) such that the edge \emph{cost} ccdf~\(F_c^{(ct)}\) satisfies the scaling relation~(\ref{f_scale}) for all~\(a > 1\) and all~\(x  >x_0.\) There are constants~\(M,\gamma >0\) such that if~\(p \geq \frac{M\log{n}}{n},\) then
\begin{equation}\label{exp_bound_up_mast}
\mathbb{E}\chi_n \leq \gamma n  \cdot J_c(np)\;\;\;\text{ and }\;\;\; var(\chi_n) \leq  \gamma n^2p   \cdot J_c^2(np).
\end{equation}
\end{lemma}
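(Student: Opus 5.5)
We bound $\chi_n$ by a sum over vertices of the largest costs of incident edges. The variance is then handled with an Efron--Stein (martingale difference) argument.

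\emph{Expectation.} Root any spanning tree ${\cal T}$ of the largest component at an arbitrary vertex. Every non-root vertex $v$ then owns exactly one edge of ${\cal T}$, namely the edge to its parent, and this edge is incident to $v$ in $G$. Hence
\begin{equation}\nonumber
\chi_n \leq \sum_{v=1}^{n} M_v, \qquad M_v := \max\{c(v,u) : Z(v,u)=1\},
\end{equation}
where the maximum over an empty set is zero. Condition on the edge states and the marks. The costs $c(v,u)$ are then identically distributed with ccdf $F_c^{(ct)}$, although they are not independent, because they share the mark $X_v$. Marginally each $c(v,u)$ still has ccdf $F_c^{(ct)}$, so a union bound over the $d(v)$ incident edges gives
\begin{equation}\nonumber
\mathbb{P}(M_v > x \mid d(v)) \leq d(v)F_c^{(ct)}(x).
\end{equation}
Put $x = a J_c(np)$ with $a>1$. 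By~(\ref{inverse_cdf_equality}) and the scaling relation~(\ref{f_scale}) for the cost ccdf, $F_c^{(ct)}(aJ_c(np)) \leq C_0 a^{-s}/(np)$, valid once $J_c(np) > x_0$, which holds for $n$ large. Integrate the tail:
\begin{equation}\nonumber
\mathbb{E}(M_v\mid d(v)) \leq J_c(np)\Bigl(1 + \int_1^\infty \min\bigl(1, C_0 d(v)a^{-s}/(np)\bigr)\,da\Bigr) \leq C J_c(np)\Bigl(1+\bigl(d(v)/np\bigr)^{1/s}\Bigr).
\end{equation}
We have $\mathbb{E}d(v) \leq b_0 np$ by~(\ref{p_cond_new}), and $x\mapsto x^{1/s}$ is concave, so Jensen gives $\mathbb{E}M_v \leq C' J_c(np)$. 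Summing over $v$ yields $\mathbb{E}\chi_n \leq \gamma nJ_c(np)$.

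\emph{Variance.} Reveal the randomness vertex by vertex. Let ${\cal F}_k$ be generated by the marks $X_1,\ldots,X_k$ and by the states and weights of all edges with both endpoints in $\{1,\ldots,k\}$. Write $\chi_n - \mathbb{E}\chi_n = \sum_k \Delta_k$ with $\Delta_k = \mathbb{E}(\chi_n\mid{\cal F}_k)-\mathbb{E}(\chi_n\mid{\cal F}_{k-1})$, so that $\mathrm{var}(\chi_n)=\sum_k\mathbb{E}\Delta_k^2$. Alternatively, apply Efron--Stein with independent copies of the data attached to vertex $k$. In either form the key estimate is a Lipschitz bound. Let $\chi_n^{(k)}$ be the resampled value, obtained by replacing $X_k$ and the states and weights of the edges from $k$ to lower-indexed vertices. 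We aim to show
\begin{equation}\nonumber
|\chi_n - \chi_n^{(k)}| \leq C\bigl(M_k + M_k^{(k)}\bigr) + R_k,
\end{equation}
where $R_k$ is nonzero only off the event $E_{con}\cap E_{con}(\{k\})\cap E_{deg}$.

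The argument on the good event is as follows. Since $G(\{k\})$ is connected, deleting vertex $k$ from a maximal spanning tree leaves components that can be rejoined by edges of $G(\{k\})$. Each rejoining step replaces an edge at $k$. One must bound the cost lost in this way by a few maxima of incident edge costs; the hardest point is that the number of pieces equals the tree degree of $k$, and that degree is not bounded. The plan is to compare with the MAST of $G(\{k\})$ instead. Adding vertex $k$ to that tree increases the maximum by at least $M_k$. In the other direction, one shows the maximum increases by at most $\sum_{u\sim k} c(k,u)$. We will therefore accept the cruder bound
\begin{equation}\nonumber
|\chi_n - \chi_n^{(k)}| \leq S_k + S_k^{(k)}, \qquad S_k := \sum_{u\sim k}c(k,u).
\end{equation}
By Lemma~\ref{lemma_hz}$(c)$, using $s>3>2$, the costs have bounded second moments. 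However, $\mathbb{E}S_k^2 \sim (np)^2$ gives $n^3p^2$, which is too large.

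The refined target is therefore the following. Removing $k$ from the MAST loses at most $M_k$ times the tree degree of $k$. The sum of tree degrees is $2(n-1)$, so the squared contributions summed over $k$ should be controlled by $J_c^2(np)$ times the sum over $k$ of (tree degree)$\cdot np$. Here the maximal-cost truncation together with $s>3$ is used to control third-order tail terms of $M_k$. This yields $\sum_k \mathbb{E}\Delta_k^2 \leq \gamma n^2 p J_c^2(np)$.

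The bad events have probability at most $e^{-Dnp}$ by Lemma~\ref{lemma_deg}. On them we use the crude bound $\chi_n \leq \sum_h c(h)$, whose second moment is $O(n^4)$. This contribution is negligible once $M$ is large, because $np \geq M\log n$.

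The main obstacle is the Lipschitz step: bounding the change of the maximum tree cost when one vertex is resampled, given that its tree degree is unbounded. The rest is routine.
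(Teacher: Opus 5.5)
Your expectation bound is correct, but it takes a different route from the paper. The paper splits edges into $j$-bad classes with cost in $[2jJ_c(np),2(j+1)J_c(np))$, counts them in $G$ on $E_{deg}$, and uses $\chi_n\le\sum_{h\in K_n}c(h)$ off $E_{deg}$. Your rooting bound $\chi_n\le\sum_v M_v$, with tail integration and Jensen, needs no good event and only $s>1$. One correction: condition on the edge states only, since given the marks the costs no longer have ccdf $F_c^{(ct)}$.

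The variance part has a genuine gap, and it is not in the pointwise Lipschitz step. On $E_{con}\cap E_{con}(\{k\})$, deleting $k$ from ${\cal T}_n$ and rejoining the $d_{\cal T}(k)$ pieces by edges of $G(\{k\})$ gives $\chi_n-\chi_{rem}(k)\le\sum_{u\sim_{\cal T}k}c(k,u)$. Attaching $k$ to the optimal tree of $G(\{k\})$ gives $\chi_{rem}(k)\le\chi_n$. Your vertex block does not touch $G(\{k\})$, so comparing both $\chi_n$ and $\chi_n^{(k)}$ with $\chi_{rem}(k)$ yields $\mathrm{var}(\chi_n)\le 2\sum_k\mathbb{E}(\chi_n-\chi_{rem}(k))^2$. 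This even spares you the paper's separate edge term $mI_{wt}$.

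What is missing is the second-moment estimate, and your ``refined target'' does not supply it. Bounding the sum by $d_{\cal T}(k)M_k$ and squaring leaves $d_{\cal T}(k)^2M_k^2$. After spending one factor $d_{\cal T}(k)\le 2B_0np$, you still need $\mathbb{E}\sum_k d_{\cal T}(k)M_k^2=O(nJ_c^2(np))$. The identity $\sum_k d_{\cal T}(k)=2(n-1)$ does not give this. $M_k$ is a maximum over all $d(k)\approx np$ graph edges at $k$, and it is correlated with $d_{\cal T}(k)$: a vertex whose mark inflates all its incident costs has both a large $M_k$ and a large tree degree. The remark that $s>3$ controls ``third-order tail terms of $M_k$'' is not an argument.

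The missing idea is Cauchy--Schwarz over the tree edges at $k$ only. On $E_{deg}$, $\bigl(\sum_{u\sim_{\cal T}k}c(k,u)\bigr)^2\le d_{\cal T}(k)\sum_{u\sim_{\cal T}k}c^2(k,u)\le 2B_0np\sum_{u\sim_{\cal T}k}c^2(k,u)$. Summing over $k$ gives at most $4B_0np\,\rho_n$ with $\rho_n:=\sum_{f\in{\cal T}_n}c^2(f)$, so everything reduces to $\mathbb{E}\rho_n\le DnJ_c^2(np)$.

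The paper proves this with the same $j$-bad segmentation as for the mean; the condition $\sum_j(j+1)^2j^{-s}<\infty$ is exactly where $s>3$ enters. Your own rooting trick would also work. Each tree edge is owned by a non-root vertex $v$, so $\rho_n\le\sum_vM_v^2$. With $K=C_0d(v)/(np)$ as in your bound, $\int_1^\infty 2a\min(1,Ka^{-s})\,da\le C(1+K^{2/s})$, and Jensen for $x\mapsto x^{2/s}$ then gives $\mathbb{E}M_v^2\le CJ_c^2(np)$, already for $s>2$.

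The bad events $E_{con}^c\cup E_{con}(\{k\})^c\cup E_{deg}^c$ contribute $O(n^5e^{-Dnp})$ through the crude bound $\sum_{h\in K_n}c(h)$. This is negligible once $M$ is large, because $J_c(np)\ge J_c(2)>0$.
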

We remark that the above result is general and applies even for cost functions different from the structure described in~(\ref{cost_def}). %In our proof  We adapt the proof strategy for the case

%Part~\((a)\) of the above Theorem implies that  with high probability (i.e., with probability~\(1-o(1)\)), the maximum cost of a spanning tree is at least of the order of~\(nJ_c(np).\) Here and henceforth, for two sequences~\(\{a_n\}\) and~\(\{b_n\},\) we use the notation~\(a_n = o(b_n)\) to denote that~\(\frac{a_n}{b_n} \rightarrow 0\) as~\(n \rightarrow \infty.\) %In Theorem~\ref{cor_mast} below, we describe a ``quasi"homogenous random graph that satisfies~(\ref{p_cond_max_ax}) and is connected with high probability.

%The term~\(J_c(np)\) in~(\ref{dev_bound_low_mast}) could be interpreted as the gain obtained in choosing a \emph{maximum} weight matching as opposed to simply selecting a matching based on a deterministic rule. From the upper bound in~(\ref{dev_bound_up_mast}), we see that this factor is essentially the best possible since the maximum cost is at most of the order of~\(nJ_c(np)\) with high probability.

\emph{Proof of Lemma~\ref{lemma_max_cst_up}}: Let~\(G\) be the marked random graph as defined in~(\ref{x_dist}) and let~\({\cal T}_n\) be the maximum cost spanning tree of the largest component in~\(G\) with cost~\(\chi_n.\) We begin by obtaining an upper bound for the expected value of~\(\chi_n.\) We recall  the event~\(E_{deg} = E_{deg}(\emptyset)\) defined in Lemma~\ref{lemma_deg} that ensures that  each vertex in~\(G\) has degree at most~\(2B_0np,\) where~\(B_0 > 0\) is the constant in~(\ref{p_cond_max_ax}). If~\(E_{deg}\) occurs, then using the handshaking relation that the sum of vertex degrees is twice the number of edges in any graph, we see that~\(G\) has at most~\(B_0n^2p\) edges.

Assuming~\(E_{deg}\) occurs, we now use a segmentation approach to estimate the maximum cost~\(\chi_n.\) For integer~\(j \geq 0,\) say that an edge~\(h = (u,v)\) of the complete graph~\(K_n\) is~\(j-\)bad if its cost~\(c(h)\) satisfies \[c(h) \in [2jJ_c(np), 2(j+1)J_c(np)),\] where~\(J_c(.)\) is the edge cost ccdf defined in~(\ref{h_c_def}). The tree~\({\cal T}_n\) has at most~\(n-1\) edges and so the total cost of all~\(0-\)bad edges in~\({\cal T}_n\) is at most \[2(n-1)J_c(np) \leq 2nJ_c(np).\] Similarly, if~\(N_{bad}(j)\) is the total number of~\(j-\)bad edges in~\(G,\) then the total cost of all~\(j-\)bad edges in~\(G\) is at most~\(2(j+1)J_c(np)N_{bad}(j).\) Therefore,  the total cost~\(\chi_n\) of~\({\cal T}_n\) is upper bounded as
\begin{equation}\label{priscille_tits22_ax}
\chi_n \leq 2nJ_c(np) + \sum_{j \geq 1} 2(j+1)J_c(np)N_{bad}(j).
\end{equation}

To estimate~\(N_{bad}(j),\) we use the scaling relation~(\ref{f_scale}) and get for any edge~\(h\) that
\begin{align}\label{priscille_tits2}
\mathbb{P}\left(c(h) \geq 2jJ_c(np)\right) &\leq \frac{C_0}{j^{s}} \cdot \mathbb{P}\left(c(h) \geq 2J_c(np)\right) \nonumber\\
&\leq \frac{C_0}{j^{s}} \cdot \frac{1}{np},
\end{align}
where~\(C_0 > 0\) is the constant in~(\ref{f_scale}) and the final estimate in~(\ref{priscille_tits2}) follows from the definition of the inverse ccdf in~(\ref{h_c_def}).  From~(\ref{priscille_tits2}), we get that
\begin{equation}\label{njk_est}
\mathbb{E}\left(N_{bad}(j) \mid E_{deg}\right) \leq B_0 n^2p \cdot \frac{C_0}{j^s} \cdot \frac{1}{np} = \frac{\beta_0 n }{j^{s}},
\end{equation}
where~\(\beta_0 > 0\) is a constant. Plugging this into~(\ref{priscille_tits22_ax}), we get that
\begin{align}\nonumber
\mathbb{E}\left(\chi_n \mid E_{deg}\right) &\leq 2nJ_c(np) + 2nJ_c(np) \sum_{j \geq 1} \frac{(j+1)}{j^s} \nonumber\\
&= \beta_1 n J_c(np),
\end{align}
for some \emph{finite} constant~\(\beta_1 > 0,\) since both~\(\delta\) and~\(s\) are strictly larger than~\(2,\) by Theorem statement. Thus
\begin{equation}\label{e_chi_n_one}
\mathbb{E}\chi_n \ind(E_{deg}) \leq \mathbb{E}\left(\chi_n \mid E_{deg}\right) \leq \beta_1 nJ_c(np).
\end{equation}

If the complement event~\(E_{deg}^c\) occurs, then we use the direct upper bound
\[\chi_n \leq \sum_{f \in K_n} c(f),\] the total cost of all edges in the complete graph~\(K_n,\) to get that
\begin{align}\label{ewn_etot_comp}
\mathbb{E}\chi_n \ind(E^c_{deg}) &\leq \sum_{ f \in K_n} \mathbb{E}c(f) \ind(E^c_{deg}) \nonumber\\
&= \sum_{f \in K_n} \mathbb{E}c(f) \mathbb{P}(E^c_{deg}),
\end{align}
since the event~\(E_{deg}\) does not depend on the edge weights or the cost factors. Further using the fact that the edge cost~\(c(f)\) has bounded moment (see Lemma~\ref{lemma_hz}\((c)\))  and recalling the estimate~(\ref{e_deg_est_max}) for the event~\(E_{deg},\) we then get from~(\ref{ewn_etot_comp}) that
\begin{equation}\label{e_chi_n_two}
\mathbb{E}\chi_n \ind(E_{deg}^c) \leq D_1 n^2 e^{-D np},
\end{equation}
where we recall that the constant~\(D > 0\) in~(\ref{e_deg_est_max}) does not depend on the choice of the constant~\(M\) in Theorem statement.

Combining~(\ref{e_chi_n_two}) and~(\ref{e_chi_n_one}) we then get that
\begin{equation}\label{ewn_pen_fin}
\mathbb{E}\chi_n \leq \beta_1 nJ_c(np) + D_1 n^2e^{-Dnp}
\end{equation}
and we choose~\(np \geq M\log{n}\) for a large enough constant~\(M\) so that~\(n^2e^{-Dnp} \leq 1.\) Further, using the fact that the inverse ccdf~\(J_c(z)\) is increasing in~\(z,\) we get that
\[J_c(np) \geq J_c(2) > 0\] and so we get that~\(\mathbb{E}\chi_n \leq 2\beta_1 nJ_c(np)\) for all~\(n\) large. This obtains the desired expectation bound for the maximum cost in Theorem statement.

In the remaining part of the proof, we use the martingale difference method based on the Efron-Stein inequality to obtain the variance bound for~\(\chi_n.\) We begin by recalling that~\(\chi_n\) is the maximum cost of a spanning tree of the largest component of the random graph~\(G\) with vertex locations~\(\{X_j\}_{1 \leq j \leq n}\) and edge states and weights~\(\{(Z(f_k),W(f_k))\}_{1 \leq k \leq m}\) where~\(m = {n \choose 2}\) and~\(\{f_k\}_{1 \leq k \leq m}\) is a deterministic ordering of the edges of the complete graph~\(K_n.\)

For~\(1 \leq j \leq n,\) let~\(G_{mod}(j)\) be the random graph obtained when the mark~\(X_j\) of the vertex~\(j\) is replaced by an independent copy~\(X_j^{(c)},\) that is also independent of all random variables defined so far. Also let~\(\chi_{mod}(j)\) be the maximum cost of a spanning tree of the largest component of~\(G_{mod}(j).\) Similarly, for~\(1 \leq k \leq m,\) we let~\(G_{mod}(f_k)\) be the random graph obtained when the edge state and weight~\((Z(f_{k}),W(f_{k}))\) of the edge~\(f_{k}\) is replaced by an independent copy~\((Z^{(c)}(f_{k}),W^{(c)}(f_{k}))\) that is also independent of all random variables defined so far. As before let~\(\chi_{mod}(f_k)\) be the maximum cost of a spanning tree of the largest component of~\(G_{mod}(f_k).\)

With the notations presented in the above paragraph, we get from the Efron-Stein inequality (see Section~\(2,\) Eq.~\((2.1)\) of~\cite{steele}) that
\begin{equation}\label{var_bound_ax_max}
var(\chi_n) \leq \sum_{j=1}^{n} \mathbb{E}\left(\chi_n - \chi_{mod}(j)\right)^2 + \sum_{k=1}^{m}\mathbb{E}\left(\chi_n - \chi_{mod}(f_k)\right)^2
\end{equation}
For future use, we upper bound~(\ref{var_bound_ax_max}) in the following way: For~\(1 \leq j \leq n,\) let~\[G_{rem}(j) := G(\{j\}) \subset G\] be the random graph obtained after removing vertex~\(j\) from~\(G\) and let~\({\cal W}_{rem}(j)\) be the maximum cost spanning tree of the largest component of~\(G_{rem}(j)\) with corresponding cost~\(\chi_{rem}(j).\)  From the triangle inequality, we have for any~\(1 \leq j \leq n\) that
\[|\chi_n-\chi_{mod}(j)| \leq |\chi_n-\chi_{rem}(j)| + |\chi_{rem}(j) - \chi_{mod}(j)|\] and so squaring and taking expectations and using~\((a+b)^2 \leq 2(a^2+b^2),\) we obtain
\begin{align}
\mathbb{E}\left(\chi_n-\chi_{mod}(j)\right)^2 &\leq 2\mathbb{E}\left(\chi_n-\chi_{rem}(j)\right)^2 + 2\mathbb{E}\left(\chi_{mod}(j)-\chi_{rem}(j)\right)^2  \nonumber\\
&= 4\mathbb{E}\left(\chi_n-\chi_{rem}(j)\right)^2, \label{sekshi}
\end{align}
since~\(\chi_{mod}(j)\) has the same distribution as~\(\chi_n.\)

Similarly, let~\(G_{rem}(f_k) \subset G\) be the random graph obtained after removing edge~\(f_k\) from~\(G\) and let~\({\cal W}_{rem}(f_k)\) be the maximum cost spanning tree of the largest component of~\(G_{rem}(f_k)\) with corresponding cost~\(\chi_{rem}(f_k).\) Arguing as in~(\ref{sekshi}), we get that
\begin{equation}
\mathbb{E}\left(\chi_n-\chi_{mod}(f_k)\right)^2 \leq  4\mathbb{E}\left(\chi_n-\chi_{rem}(f_k)\right)^2. \label{sekshi2}
\end{equation}
Plugging~(\ref{sekshi2}) and~(\ref{sekshi}) into~(\ref{var_bound_ax_max}), we obtain
\begin{align}
var(\chi_n) &\leq 4\sum_{j=1}^{n}\mathbb{E}\left(\chi_n-\chi_{rem}(j)\right)^2 +4\sum_{k=1}^{m}\mathbb{E}\left(\chi_n - \chi_{rem}(f_k)\right)^2\nonumber\\
&= 4nI_{loc} + 4mI_{wt}, \label{i_loc_wt_est_max}
\end{align}
where
\[I_{loc} := \mathbb{E}\left(\chi_n-\chi_{rem}(1)\right)^2 \text{ and } I_{wt} := \mathbb{E}\left(\chi_n-\chi_{rem}(f_1)\right)^2\]  denote the scaled contributions due to randomness in vertex locations and edge states/weights, respectively.

In what follows, we estimate~\(I_{wt}\) and~\(I_{loc}\) in that order below. Because the condition~(\ref{p_cond_new}) holds, we know that the condition~(\ref{p_cond_max_ax}) in Lemma~\ref{lemma_deg} holds as well with appropriate constants~\(A_0\) and~\(B_0.\)\\
\emph{\underline{Step 1} (Estimating~\(I_{wt}\))}: Let~\(f_1 = (u_1,v_1)\) have~\(u_1\) and~\(v_1\) as endvertices. From the discussion prior to Lemma~\ref{lemma_deg}, we recall that~\(G(\{u_1,v_1\})\) is the graph obtained by removing the \emph{vertices}~\(u_1\) and~\(v_1\) from~\(G.\) Also recalling that~\(E_{con}(\{u_1,v_1\})\) is the event that~\(G(\{u_1,v_1\})\) is connected (see Lemma~\ref{lemma_deg}), we get from~(\ref{e_con_est_max}) that
\begin{equation}\label{madai_ax}
\mathbb{P}(E_{con}(\{u_1,v_1\})) \geq 1-\exp\left(-Dnp\right)
\end{equation}
for some constant~\(D > 0,\) not depending on the choice of~\(\{u_1,v_1\}.\) We also recall the event~\(E_{deg} := E_{deg}(\emptyset)\) defined  in~(\ref{e_deg_est_max}) that ensures that the degree of each vertex in~\(G\) is at least of the order of~\(np.\) Defining
\[E_{nice} :=  E_{con}\left(\emptyset\right) \bigcap E_{con}(\{u_1,v_1\})\] and choosing the constant~\(D > 0\) in~(\ref{madai_ax}) smaller if necessary, we invoke the union bound and get from the corresponding probability estimates~(\ref{e_con_est_max}) and~(\ref{madai_ax}) that
\begin{equation}\label{e_nice_est}
\mathbb{P}(E_{nice}) \geq 1-2\exp\left(-Dnp\right).
\end{equation}

We now split~\(I_{wt}\) as
\begin{equation}\label{i_wt_split_ax}
I_{wt} = I_{wt,1} + I_{wt,2},
\end{equation}
where
\[I_{wt,1} := \mathbb{E}\left(\chi_n-\chi_{rem}(f_1)\right)^2\ind(E_{nice})\] and
\[I_{wt,2} := \mathbb{E}\left(\chi_n-\chi_{rem}(f_1)\right)^2\ind(E^c_{nice}).\] In what follows, we estimate~\(I_{wt,2}\) and~\(I_{wt,1}\) in that order below.

If~\(E_{nice}^c\) occurs, then we use the direct bound~\[\chi_n \leq  \sum_{f\in K_n} c(f),\] the sum of costs of all edges in the complete graph~\(K_n.\) The same estimate holds for~\(\chi_{rem}(f_1)\) as well and so we get
\[|\chi_n - \chi_{rem}(f_1)| \leq \sum_{f \in K_n} c(f).\] Consequently
\begin{align}
I_{wt,2} &= \mathbb{E}\left(\chi_n - \chi_{rem}(f_1)\right)^2 \ind(E_{nice}^c) \nonumber\\
&\leq \mathbb{E}\left(\sum_{f \in K_n} c(f)\right)^2 \ind(E_{nice}^c) \nonumber\\
&= \mathbb{E}\left(\sum_{f \in K_n}c(f)\right)^2 \mathbb{P}\left(E_{nice}^c\right), \label{sakiye}
\end{align}
since~\(E_{nice}\) depends only on the edge states and is therefore independent of edge weights and vertex marks.

Using~\(\left(\sum_{i=1}^{l}a_i\right)^2 \leq l \sum_{i=1}^{l}a_i^2\) and recalling that there are~\(m={n \choose 2}\) edges in~\(K_n,\) we get that
\begin{align}
\mathbb{E}\left(\sum_{f \in K_n} W(f)\right)^2 &\leq m\sum_{f \in K_n} \mathbb{E}c^2(f)   \nonumber\\
&\leq D_1m^2 \nonumber\\
&= D_1n^4, \nonumber
\end{align}
for some constant~\(D_1> 0,\) since the edge costs have bounded second moments (see Lemma~\ref{lemma_hz}\((c)\)).  Plugging this into~(\ref{sakiye}) and using the estimate~(\ref{e_nice_est}) for~\(E_{nice}\) we get that
\begin{equation}\label{i_wt_two_est}
I_{wt,2} \leq D_1 n^4 \cdot e^{-Dnp} \leq e^{-D_2 np}
\end{equation}
for some constant~\(D_2 > 0,\) provided~\(np \geq M\log{n}\) for large enough constant~\(M.\) This obtains an upper bound for~\(I_{wt,2}.\)

To estimate~\(I_{wt,1},\) we assume henceforth that~\(E_{nice}\) occurs so that both~\(G\) and~\(G(\{u_1,v_1\})\) are connected. Because~\(E_{deg} \supset E_{join}\) also occurs both~\(u_1\) and~\(v_1\) are adjacent to at least~\(D_0 np\) vertices in~\(G.\) Therefore the connectivity of~\(G(\{u_1,v_1\})\) ensures that~\(G_{rem}(f_1)\) is connected as well and we let~\({\cal T}_n\) and~\({\cal T}_{rem}(f_1)\) be the maximum cost spanning trees of~\(G\) and~\(G_{rem}(f_1),\) respectively. Clearly, any spanning tree of~\(G_{rem}(f_1)\) is also a spanning tree of~\(G\) and so~\({\cal T}_{rem}(f_1)\) has cost at most~\(\chi_n;\) i.e.,
\begin{equation}\label{tim_one}
\chi_{rem}(f_1) \leq \chi_n.
\end{equation}

For the reverse direction, we see that~\(\tau_{rem}(f_1) < \tau_n\) only if~\(f_1 \in {\cal T}_n.\) If we remove~\(f_1\) from~\({\cal T}_n,\) then we get two subtrees~\({\cal R}_a\) and~\({\cal R}_b\) of~\({\cal T}_n,\) that are also trees in~\(G_{rem}(f_1).\) Since~\(G_{rem}(f_1)\) is connected, there must exist an edge~\(h_{ab} \in G_{rem}(f_1)\) such that the union~\[{\cal T}_{ab} := {\cal R}_a \cup \{h_{ab}\} \cup {\cal R}_b\] is connected. This is illustrated in Figure~\ref{fig_sub_trees22}, where the removed edge~\(f_1\) is represented by the dotted line~\(AB\) and the trees~\({\cal R}_a\) and~\({\cal R}_b\) are denoted by the triangles~\(R_a\) and~\(R_b.\) Adding the edge~\(h_{ab} = CD\) gives the tree~\({\cal T}_{ab} \subset G_{rem}(f_1).\)

\begin{figure}[tbp]
\centering
%\fbox{
\includegraphics[width=3.5in, trim= 0 500 250 90, clip=true]{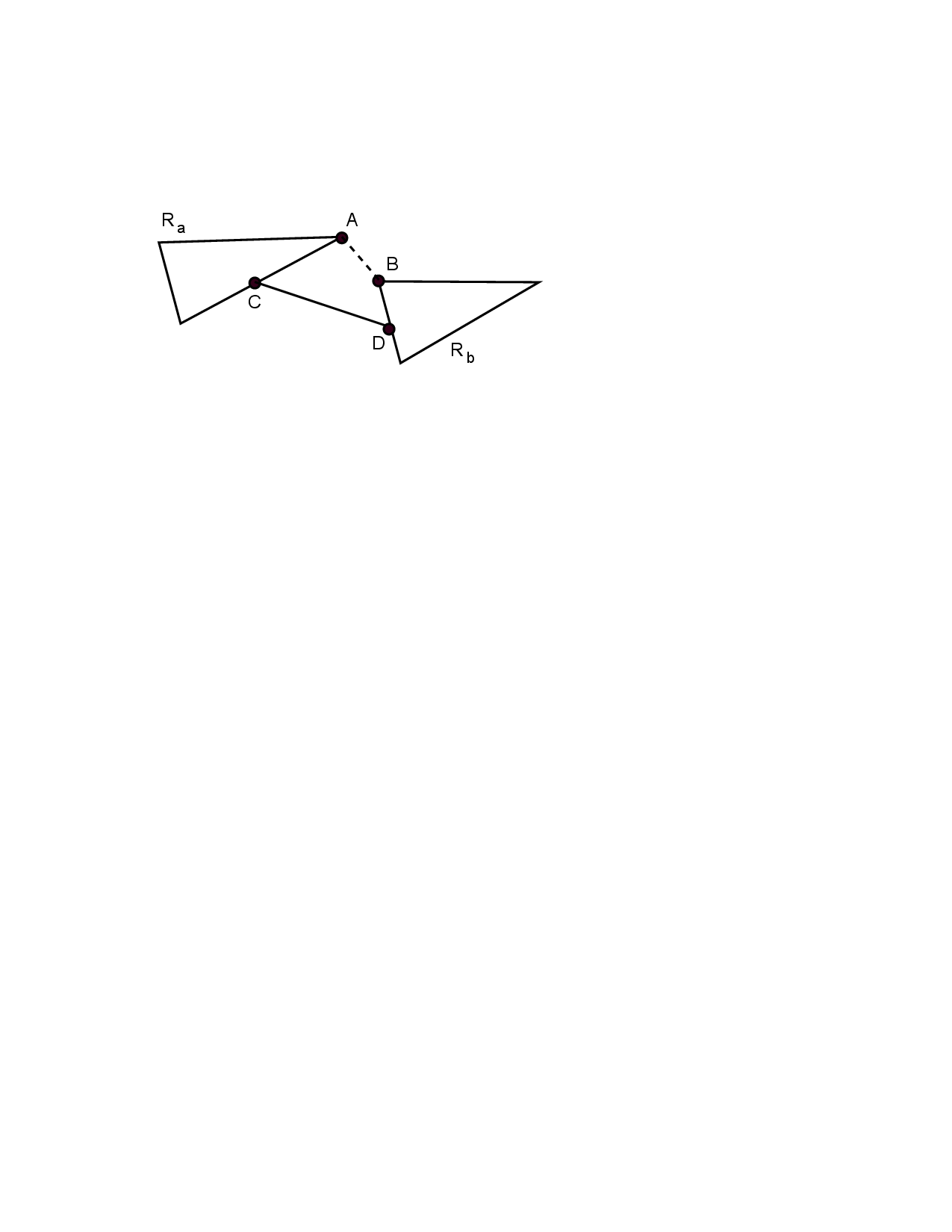}
%}
\caption{ Adding the edge~\(h_{ab} = CD\) to the trees~\(R_a\) and~\(R_b\) gives the spanning tree~\({\cal T}_{ab} \subset G_{rem}(f_1).\)}
\label{fig_sub_trees22}
\end{figure}

The tree~\({\cal T}_{ab}\) is a spanning tree of~\(G_{rem}(f_1)\) and has cost at least~\(\chi_n - c(f_1),\) where~\(c(f_1)\) as defined in~(\ref{cost_def}), is the cost of the edge~\(f_1\) and so we get that
\begin{equation}\label{tim_two}
\chi_{rem}(f_1) \geq \chi_n - c(f_1).
\end{equation}
Combining~(\ref{tim_two}) with~(\ref{tim_one}), we get that
\[|\chi_n-\chi_{rem}(f_1)| \ind(E_{nice}) \leq c(f_1) \ind(f_1 \in {\cal T}_n)\] and so squaring and taking expectations, we get that
\begin{equation}\label{anegan_ax}
I_{wt,1} \leq \mathbb{E}c^2(f_1)\ind\left(f_1 \in {\cal T}_n\right)= \frac{\mathbb{E}\rho_n}{m},
\end{equation}
where
\begin{equation}\label{rho_def}
\rho_n := \sum_{f \in K_n} c^2(f)\ind(f \in {\cal T}_n)
\end{equation}
is the sum of \emph{squares} of edge costs in the maximum cost spanning tree~\({\cal T}_n\) and the final estimate in~(\ref{anegan_ax}) follows from symmetry. As defined before,~\(m = {n \choose 2}\) is the number of edges in~\(K_n.\)

As in the proof of the expectation upper bound for~\(\chi_n\) described above, we use a segmentation approach to get that \[\rho_n \leq D \left(ny_n^2 + y_n^2\sum_{j}(j+1)^2N_{bad}(j)\right)\] for some constant~\(D > 0,\) where~\(y_n := J_c(np)\) and we recall that~\(N_{bad}(j)\) is the number of~\(j-\)bad edges in~\(G;\) i.e., the number of edges whose cost lies in the interval~\([2jy_n,2(j+1)y_n).\)   Using the fact that the scaling relation~(\ref{f_scale}) holds for some~\(s > 3\) strictly and  following an analogous analysis as in the derivation of the  upper bound for~\(\mathbb{E}\chi_n,\) we then get
\begin{equation}\label{rho_est}
\mathbb{E}\rho_n \leq D_1 n  H^2_c(np)
\end{equation}
for some constant~\(D_1  >0,\) provided~\(np \geq M\log{n}\) for some large constant~\(M  > 0.\) Plugging this into~(\ref{anegan_ax}), we get
\begin{equation}\label{i_one_est_ax}
mI_{wt,1} \leq D_1n  \cdot J_c^2(np).
\end{equation}

Combining~(\ref{i_one_est_ax}) with the estimate~(\ref{i_wt_two_est}) for~\(I_{wt,2}\) we get from~(\ref{i_wt_split_ax}) that
\begin{align}
mI_{wt} &= mI_{wt,1} + mI_{wt,2} \nonumber\\
&\leq D_1n \cdot H^2_c(np) + 4me^{-D_2np} , \nonumber
\end{align} where we recall that~\(m=n^2.\) Arguing as in the discussion following~(\ref{ewn_pen_fin}), we see that~\(J_c(np)\) is uniformly bounded away from zero. Therefore if~\(np \geq M\log{n}\) for a large enough constant~\(M,\) then~
\begin{equation}\label{i_wt_est_max}
mI_{wt} \leq 2D_1n \cdot H^2_c(np)
\end{equation}
for all~\(n\) large. This obtains the desired variance contribution estimate due to randomness in edge states and weights.

\emph{\underline{Step 2} (Estimating~\(I_{loc}\))}: To estimate the difference~\(\chi_n - \chi_{rem}(1),\) we proceed as in the proof of Step~\(1\) above with appropriate modifications. Recalling the events~\(E_{con}(.)\) and~\(E_{deg}(.)\) regarding the connectivity and vertex degrees of subgraphs of~\(G,\) we define the joint event
\begin{equation}\label{e_good_def}
E_{good} := E_{con}\left(\emptyset\right) \bigcap E_{con}(\{1\}) \bigcap E_{deg}(\{1\}) \bigcap E_{deg}(\emptyset)
\end{equation}
and argue as in~(\ref{e_nice_est}) to get that
\begin{equation}\label{e_good_est}
\mathbb{P}(E_{good}) \geq 1-4\exp\left(-Dnp\right),
\end{equation}
for some constant~\(D > 0.\) As in~(\ref{i_wt_split_ax}),  we split~\(I_{loc}\) as
\begin{equation}\label{i_loc_split_ax}
I_{loc} = I_{loc,1} + I_{loc,2},
\end{equation}
where
\[I_{loc,1} := \mathbb{E}\left(\chi_n-\chi_{rem}(1)\right)^2\ind(E_{good})\] and
\[I_{loc,2} := \mathbb{E}\left(\chi_n-\chi_{rem}(f_1)\right)^2\ind(E^c_{good})\] and estimate~\(I_{loc,2}\) and~\(I_{loc,1}\) in that order below.

If~\(E_{good}\) does not occur, i.e., if~\(E^c_{good}\) occurs, then we argue as in the derivation of~(\ref{i_wt_two_est}) to get that
\begin{equation}\label{i_loc_two_est}
nI_{loc,2} \leq e^{-D_1np}
\end{equation}
for some constant~\(D_1 > 0,\) provided~\(np \geq M\log{n}\) for a large enough constant~\(M.\)

Suppose now that~\(E_{good}\) occurs so that both~\(G\) and~\(G(\{1\})\) are connected. This implies that the maximum weight spanning tree of~\(G(\{1\})\) could be extended to obtain a spanning tree of~\(G\) and so~\(\chi_{rem}(1) \leq \chi_n.\) For the other direction, let~\({\cal T}_n\) be the maximum cost spanning tree of~\(G\) and let~\(d_{{\cal T}}(v)\)  be the degree of vertex~\(v\) in~\({\cal T}_n.\) also let~\(\{v_1,\ldots,v_w\}, w = d_{{\cal T}}(1)\) be the neighbours of the vertex~\(1\) in~\({\cal T}_n.\)

Removing the vertex~\(1,\) we obtain~\(t\) subtrees~\(\{{\cal S}_i\}_{1 \leq i \leq t}\) of~\(G(\{1\}).\) But because~\(G(\{1\})\) is connected, we add~\(t-1\) edges~\(\{h_j\}_{1 \leq j \leq t-1}\) to the union~\(\bigcup_{i=1}^{t} \{{\cal S}_i\}\) to obtain a spanning tree~\({\cal T}_{new}\) of~\(G(\{1\}).\) The above procedure is illustrated in Figure~\ref{fig_sub_trees} for the case~\(t=3.\)

\begin{figure}[tbp]
\centering
%\fbox{
\includegraphics[width=6in, trim= 220 200 50 110, clip=true]{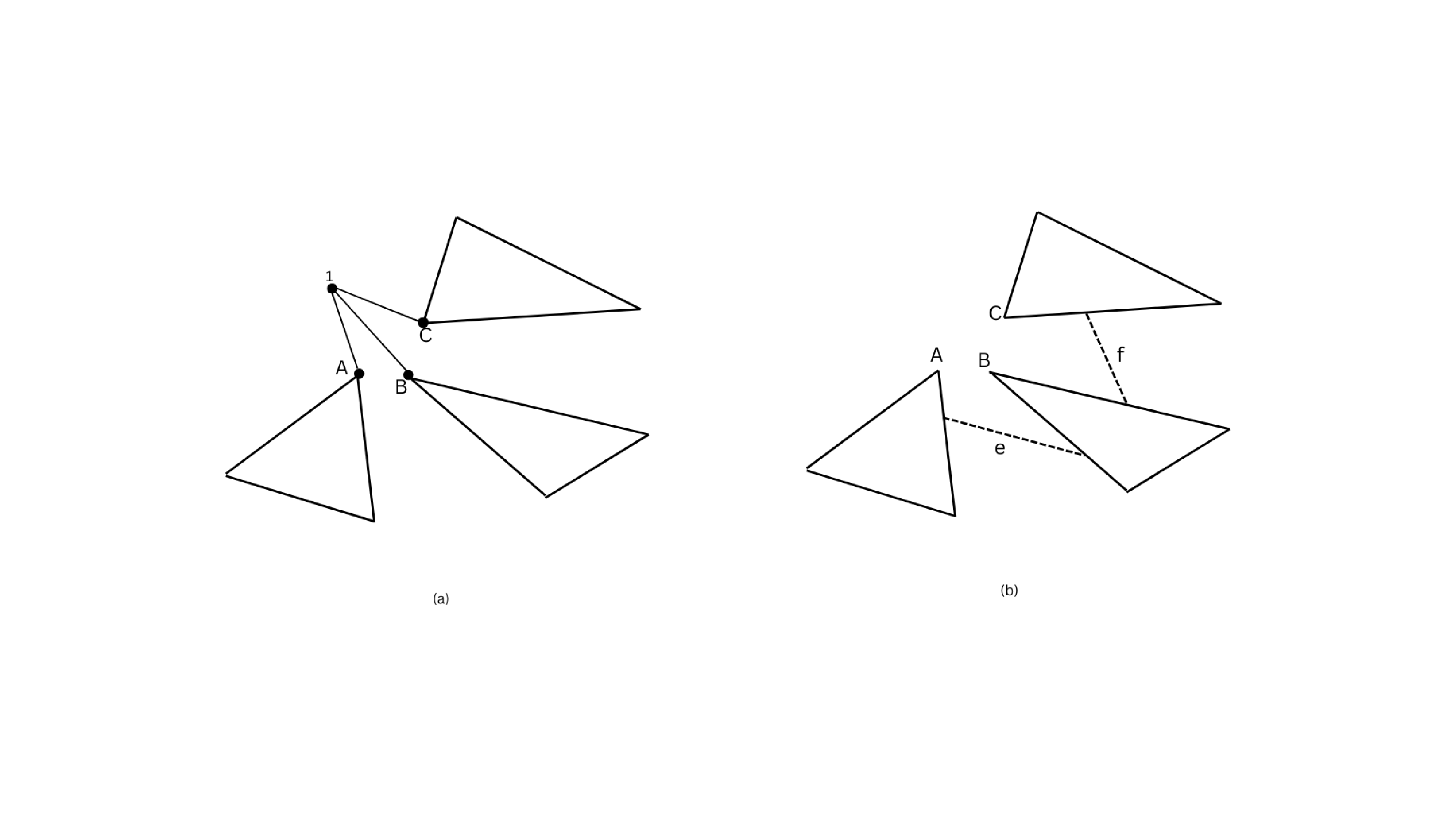}
%}
\caption{The subtrees~\({\cal S}_i, 1 \leq i \leq t=3\) containing the neighbours~\(v_1=A,v_2=B,v_3=C\) of the vertex~\(1\) are shown in~\((a).\) Removing vertex~\(1\) and adding the  edges~\(h_1=e\) and~\(h_2=f\) gives a spanning tree of the graph~\(G(\{1\})\) as shown in~\((b).\)}
\label{fig_sub_trees}
\end{figure}

The weight of~\({\cal T}_{new}\) is at least \[\chi_n - \sum_{j=1}^{t} c(1,v_j),\] where we recall from~(\ref{cost_def}) that~\(c(u,v)\) is the cost of the edge~\((u,v)\) with endvertices~\(u\) and~\(v.\) Thus
\[\chi_{rem}(1) \geq \chi_n - \sum_{j=1}^{t} c(1,v_j)\] and combining with the upper bound~\(\chi_{rem}(1) \leq \chi_n\) obtained before, we get that if~\(E_{good}\) occurs, then
\[|\chi_n - \chi_{rem}(1)| \leq \sum_{j=1}^{w} c(1,v_j).\] Letting~\(u \sim_{\tau} v\) denote that the vertices~\(u\) and~\(v\) are adjacent in~\({\cal T}_n,\) we get that
\begin{align}
|\chi_n - \chi_{rem}(1)|\ind(E_{good}) &\leq \sum_{u \sim_{\tau} 1} c(u,1) \ind(E_{good}) \nonumber\\
&\leq \sum_{u \sim_{\tau} 1} c(u,1) \ind(E_{deg}), \label{talpaa}
\end{align}
where we use the notation~\(E_{deg} = E_{deg}(\emptyset)\) and the final estimate in~(\ref{talpaa}) is true by the definition of~\(E_{good}\) in~(\ref{e_good_def}).

Squaring and taking expectations in~(\ref{talpaa}), we get
\begin{align}
I_{loc,1} &= \mathbb{E}\left(\chi_n - \chi_{rem}(1)\right)^2 \ind(E_{good}) \nonumber\\
&\leq \mathbb{E}\left(\sum_{u \sim_{\tau} 1} c(u,1)\right)^2 \ind(E_{deg}) \nonumber
\end{align}
and so using~\((\sum_{i=1}^{l} a_i)^2 \leq l \sum_{i=1}^{l}a_i^2\) and recalling that~\(d_{{\cal T}}(v)\) is the degree of~\(v\) in the maximum cost spanning tree~\({\cal T}_n,\) we get that
\begin{align}
I_{loc,1} &\leq \mathbb{E} \left(d_{{\cal T}}(1)\sum_{ u \sim_{\tau} 1}c^2(u,1) \right) \ind(E_{deg}) \nonumber\\
&= \frac{1}{n} \mathbb{E}\left(\sum_{v=1}^{n}d_{{\cal T}}(v) \sum_{u \sim_{\tau} v} c^2(u,v) \right) \ind(E_{deg}) \nonumber\\
&= \frac{1}{n} \mathbb{E}\ind(E_{deg}) \sum_{v=1}^{n} \sum_{u \neq v} c^2(u,v) d_{{\cal T}}(v) \ind(u \sim_{\tau} v) \nonumber\\
&= \frac{1}{n}\mathbb{E} \ind(E_{deg})\sum_{ (u,v) \in K_n} c^2(u,v) \left(d_{{\cal T}}(u) + d_{{\cal T}}(v)\right)  \ind((u,v) \in {\cal T}_n), \label{kalp_two}
\end{align}
where the first equality in~(\ref{kalp_two}) follows from symmetry and the  final summation in~(\ref{kalp_two}) is over all edges~\((u,v)\) in the complete graph~\(K_n.\)

Because~\(E_{deg}\) occurs, we know that each vertex has degree at most~\(D np\) for some constant~\(D > 0\) in the graph~\(G\) and so we get from~(\ref{kalp_two}) that
\[nI_{loc,1} \leq 2Dnp \mathbb{E}\rho_n,\]
where~\(\rho_n\) as defined in~(\ref{rho_def}) is the sum of \emph{squares} of edge costs in the maximum cost spanning tree~\({\cal T}_n.\) Further using the estimate~(\ref{rho_est}) for~\(\mathbb{E}\rho_n,\) we get that
\begin{equation}\label{tal_tum}
nI_{loc,1} \leq D_2 \cdot n^2p \cdot  H^2_c(np).
\end{equation}
for some constant~\(D_2 > 0.\)

Combining~(\ref{tal_tum}) with the estimate~(\ref{i_loc_two_est}) for~\(nI_{loc,2}\) and using~\(\mathbb{E}r^2(f) \leq \mu_{up}^2,\) we get that
\begin{equation}\label{i_loc_est_max}
nI_{loc} \leq D_3  n^2p \cdot  H^2_c(np).
\end{equation}
This obtains is the contribution to the variance due to the randomness in the vertex marks.  Finally, plugging the respective estimates~(\ref{i_loc_est_max})) and~(\ref{i_wt_est_max}) for~\(I_{loc}\) and~\(I_{wt}\) into the upper bound~(\ref{i_loc_wt_est_max}) for the variance of~\(\chi_n,\) we get the desired bound  in Lemma statement. This completes the proof  of the Lemma.~\(\qed\)

\emph{Proof of Theorem~\ref{thm_max_cst_up}}: To obtain the upper deviation bound in~(\ref{dev_bound_mast_up}),  we now argue as in the proof of Lemma~\ref{lemma_max_cst_up} to first get that
\begin{equation} \label{e_chi_n_new}
\mathbb{E} \chi_n \leq D n H_c(np) \;\;\; \text{ and } \;\;\; var(\chi_n) \leq D n^2p H_c(np)
\end{equation}
for some constant~\(D > 0.\) Indeed, say that an edge~\(f\) of the complete graph~\(K_n\) is~\(j-\)\emph{level} if its cost~\[c(f) \in \left[2jc_2H_c(np),2(j+1)c_2H_c(np)\right),\] where~\(c_2 > 0\) is the constant in the domination relation~(\ref{dom_cond}). Letting~\(N_{level}(j)\) be the total number of~\(j-\)level edges in~\(K_n\) and arguing as in the derivation of~(\ref{priscille_tits22_ax}), we get that
\begin{equation}\label{priscille_tits22_new}
\chi_n \leq 2c_2nH_c(np) + 2c_2H_c(np) \sum_{j \geq 1} (j+1)\cdot N_{level}(j).
\end{equation}

Using~(\ref{dom_cond}) and the fact that the edge weight ccdf satisfies the scaling relation~(\ref{f_scale}), we see that
\begin{align}
\mathbb{P}\left(c(f) > 2jc_2H_c(np)\right) &\leq \frac{1}{c_1} \mathbb{P}\left(W(f) > 2jH_c(np)\right) \nonumber\\
&\leq \frac{C_0}{c_1j^s} \mathbb{P}\left(W(f) > 2H_c(np)\right) \nonumber\\
&\leq \frac{C_0}{c_1j^s} \cdot \frac{1}{np},
\end{align}
by the definition of the inverse ccdf~\(H_c(.)\) as described in the paragraph containing~(\ref{h_c_def}). Therefore using~(\ref{priscille_tits22_new}) and arguing as in the derivation of the expectation upper bound for~\(\chi_n\) in~(\ref{exp_bound_up_mast}), we get the first estimate in~(\ref{e_chi_n_new}). Similarly, an analogous argument as in the derivation of the variance bound in~(\ref{exp_bound_up_mast}) obtains the second estimate in~(\ref{e_chi_n_new}). This completes the proof of~(\ref{e_chi_n_new}).

Finally, from the deviation lower bound in~(\ref{dev_bound_mast_low}) we also have that~\(\mathbb{E}\chi_n \geq D_1 n H_c(np)\) for some constant~\(D_1 > 0\) and so using the bounds in~(\ref{e_chi_n_new}) and invoking the Chebychev inequality
\[\mathbb{P}\left(|\chi_n - \mathbb{E}\chi_n| \geq \epsilon \mathbb{E} \chi_n\right) \leq \frac{var(\chi_n)}{(\epsilon\mathbb{E}\chi_n)^2}\] for~\(\epsilon > 0,\) we get
\begin{align}
&\mathbb{P}\left((1-\epsilon)D_1 nH_c(np) \leq \chi_n \leq (1+\epsilon) D nH_c(np)\right) \nonumber\\
&\;\;\;\;\;\geq\;\;1- \frac{Dn^2pH_c^2(np)}{D_1^2n^2H_c^2(np)} \nonumber\\
&\;\;\;\;\;=\;\;1-D_2p \label{skida_c}
\end{align}
for some constant~\(D_2 > 0.\) This obtains the desired upper bound in~(\ref{dev_bound_mast_up}) and therefore completes the proof of the Theorem.~\(\qed\)

\renewcommand{\theequation}{\thesection.\arabic{equation}}
\setcounter{equation}{0}
\section{Proof of Theorem~\ref{thm_spat_mast}}\label{sec_pf_mast_spat_thm}
\emph{Proof of Theorem~\ref{thm_spat_mast}\((a)\)}: Set~\(s_n := \frac{1}{\sqrt{np}}\) and divide the unit square~\(S\) into small disjoint~\(s_n \times s_n\) squares~\(\{R_i\}_{1 \leq i \leq N}\) where we assume for simplicity that~\(N = \frac{1}{s_n^2}\) is an integer; else we choose the side length of~\(R_i\) in the interval~\([s_n,2s_n)\) appropriately so that~\(N\) is an integer. This is possible since
\begin{equation}\label{s_n_est}
\left(\frac{1}{s_n}\right)^2 - \left(\frac{1}{2s_n}\right)^2 = \frac{3}{4s_n^2}  = \frac{3np}{4} \geq 1,
\end{equation}
for all~\(n\) large, since~\(np \rightarrow \infty\) by the statement of this Theorem. We label the squares as in Figure~\ref{fig_squares} so that~\(R_i\) and~\(R_{i+1}\) share an edge for each~\(1 \leq i \leq N-1.\)

Our strategy is to estimate the maximum size of matching in each~\(R_i\) and then collect these together to form an overall matching, whose size is comparable to~\(n.\) Each edge in the matching has length at most~\(s_n = \frac{1}{\sqrt{np}}\) and therefore cost factor at least~\((np)^{\alpha/2}.\) Since the edge weights are i.i.d., we invoke the law of large numbers to  obtain the desired lower bound. Details follow.

\begin{figure}[tbp]
\centering
%\fbox{
\includegraphics[width=3in, trim= 20 200 50 110, clip=true]{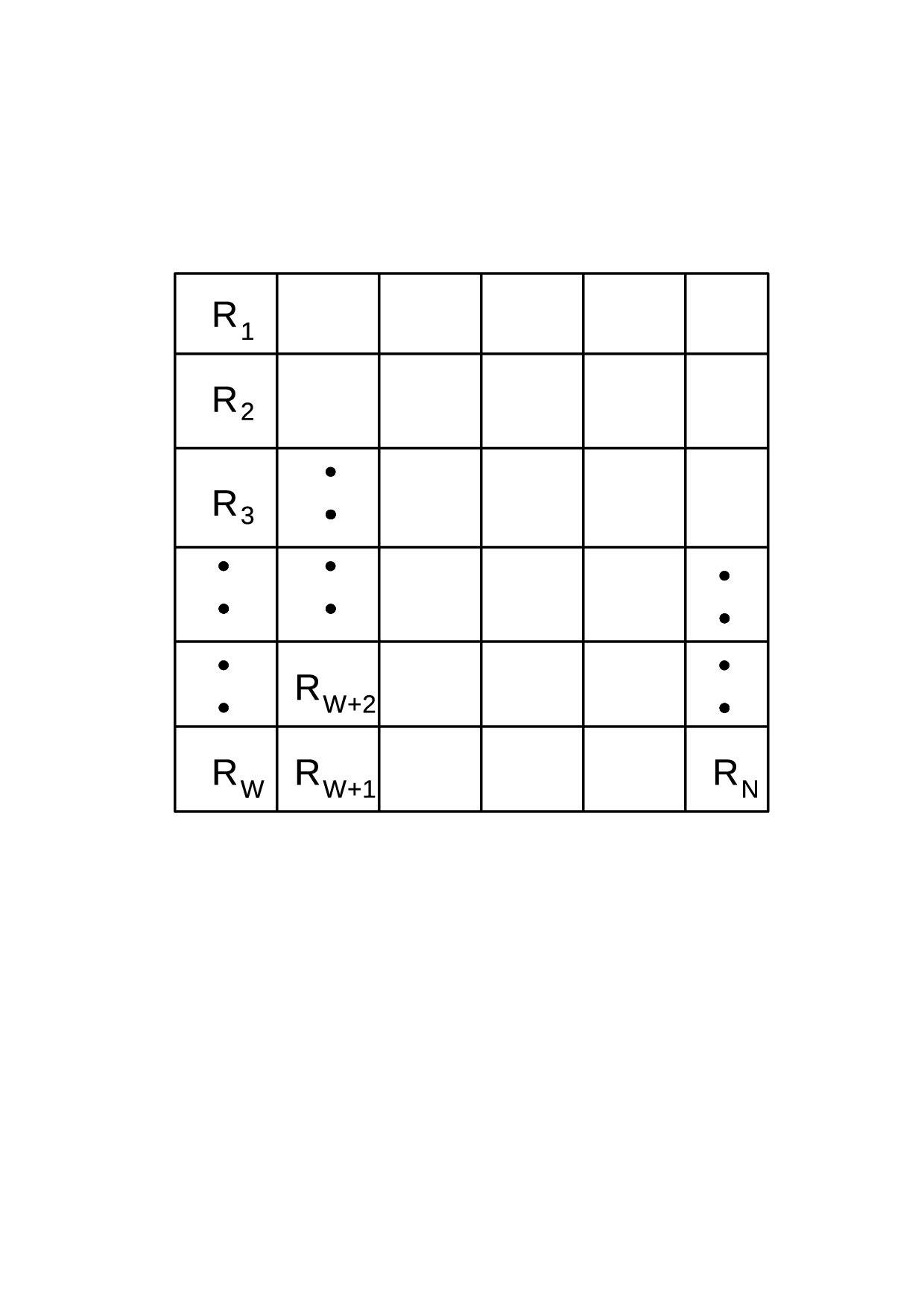}
%}
\caption{Tiling the unit square into~\(N = \frac{1}{s_n^2}\) smaller~\(s_n \times s_n\) squares~\(\{R_l\}_{1 \leq l \leq N}.\)}
\label{fig_squares}
\end{figure}

Let~\( {\cal V} =  {\cal V}(R_i)\) be the set of vertices located in~\(R_i\) and say that a set of edges~\(\{h_i\}_{1 \leq i \leq t}\) of~\(G\) is a \emph{matching} of size~\(t\) in~\(R_i,\) if~\(h_i\) and~\(h_j\) are vertex disjoint for~\(i \neq j\) and each~\(h_i\) has both its endvertices in~\({\cal V}.\) Letting~\(M_i\) denote the maximum size of a matching in~\(R_i,\) we show below that
\begin{equation}\label{match_i_est}
\mathbb{P}\left(M_i \geq \frac{\lambda}{p}\right) \geq 1- 2\exp\left(-\frac{\lambda}{p}\right),
\end{equation}
for some constant~\(\lambda > 0.\) Constants here and henceforth do not depend on the choice of~\(i\) or~\({\cal V}.\)

Indeed,  the size~\(N(R_i) := \#{\cal V}\) of the vertex set~\({\cal V}\) is Binomially distributed with parameters~\(n\) and~\(\int_{R_i} f \in [\epsilon_1 s_n^2, \epsilon_2 s_n^2]\) by the bounds in~(\ref{f_eq}). Consequently, the deviation estimate~(\ref{conc_est_f}) implies that
\begin{equation}\label{nri_est}
\mathbb{P}\left(D_1 ns_n^2 \leq N(R_i) \leq D_2ns_n^2\right) \geq 1-\exp\left(-D_1 ns_n^2\right),
\end{equation}
for some constants~\(D_1,D_2 > 0\) and each~\(1 \leq i \leq N = \frac{1}{s_n^2}.\) Recalling that~\(s_n= \frac{1}{\sqrt{np}}\) and setting \[E_{vert}(i) := \left\{D_1ns_n^2 \leq N(R_i) \leq D_2 ns_n^2\right\}\] and invoking the union bound, we get that
\begin{equation}\label{n_i_est_bax}
\mathbb{P}\left(E_{vert}(i) \right) \geq 1-\exp\left(-D_1ns_n^2\right).
\end{equation}

Assume henceforth that~\(E_{vert}(i)\) occurs and also that
\begin{equation}\label{L_def}
D_1 ns_n^2 = \frac{D_1}{p} =: 2L
\end{equation}
is even. Pick a subset~\({\cal W} \subset {\cal V}\) containing~\(2L\) vertices and split~\({\cal W} := {\cal L} \cup {\cal P}\) into two subsets containing~\(L\) vertices each. Say that~\({\cal M}\) is an~\(({\cal L},{\cal P})-\)matching if each edge of~\({\cal M}\) is present in~\(G\) and has one endvertex in~\({\cal L}\) and another endvertex in~\({\cal P}.\) Let~\({\cal M}(W)\) be an~\(({\cal L},{\cal P})-\)matching of maximum size~\(M_W\) and let~\({\cal A}_W \subset {\cal L}\) and~\({\cal B}_W \subset {\cal P}\) be the set of endvertices of edges, \emph{not} in~\({\cal M}(W).\)

In what follows, we estimate~\(M_W\) and thereby obtain a lower bound for the maximum size~\(M_i\) of a matching in the square~\(R_i.\) For~\({\cal A} \subset {\cal L}\) and~\({\cal B} \subset {\cal P},\) let~\(F({\cal A}, {\cal B})\) be the event that no edge of~\(G\) has one endvertex in~\({\cal A}\) and another endvertex in~\({\cal B}.\) If~\(M_W \leq \varepsilon L\) for some constant~\(0 < \varepsilon < 1,\) then the sets~\({\cal A}_W\) and~\({\cal B}_W\) each have size at least~\((1-\varepsilon)L\) and moreover~\(F({\cal A}_W, {\cal B}_W)\) occurs. This is because if there existed an edge~\(f_M\) of~\(G\) with one endvertex in~\({\cal A}_W\) and another endvertex in~\({\cal B}_W,\) then adding~\(f_M\) to~\({\cal M}(W)\) gives a~\(({\cal L},{\cal P})-\)matching of size at least~\(M_W+1,\) a contradiction to the maximality of~\({\cal M}(W).\)

Summarizing,
\[\{M_W \leq \varepsilon L\} \subseteq \bigcup_{{\cal A} \subset {\cal L}} \bigcup_{{\cal B} \subset {\cal P}} F({\cal A}, {\cal B}),\]
where the union is over all sets~\({\cal A} \subset {\cal L}\)  and~\({\cal B} \subset {\cal P},\) each containing~\(t \geq (1-\varepsilon)L\) vertices.
An application of the union bound gives
\begin{equation}\label{m_ax_est}
\mathbb{P}\left(M_W \leq \varepsilon L  \mid {\cal V}\right) \leq \sum_{{\cal A} \subset {\cal L}} \sum_{{\cal B} \subset {\cal P}} \mathbb{P}\left(F({\cal A}, {\cal B}) \mid {\cal V}\right),
\end{equation}
where~\(\mathbb{P}(. \mid {\cal V})\) is the probability distribution conditioned on the set of vertices~\({\cal V}\) located in~\(R_i.\)

For a fixed~\({\cal A}, {\cal B}\) each having at least~\((1-\varepsilon)L\) vertices, we see that the event~\(F({\cal A}, {\cal B})\) happens with probability at most
\begin{align}
(1-p)^{(1-\varepsilon)^2L^2} &\leq \exp\left(- (1-\varepsilon)^2 pL^2 \right)  \nonumber\\
&= \exp\left(- \frac{D_1(1-\varepsilon)^2 L}{2} \right) \nonumber\\
&\leq \exp\left(-\frac{D_1L}{8}\right), \label{chimi_changa}
\end{align}
where the second estimate in~(\ref{chimi_changa}) is true by the definition of~\(L\) in~(\ref{L_def}) and the final bound in~(\ref{chimi_changa}) is true, provided we choose~\(0 < \varepsilon < \frac{1}{2}.\)

The number of choices for~\({\cal A}\) is~\[ \sum_{(1-\varepsilon)L \leq k \leq L} {L \choose k} = \sum_{0 \leq k \leq \varepsilon L} {L \choose k} \leq \varepsilon L \cdot {L \choose \varepsilon L},\] by the unimodality of the binomial coefficient and the fact that~\(0 < \varepsilon < \frac{1}{2}.\) Further using~\({a \choose b} \leq \left(\frac{ae}{b}\right)^{b},\) we get that
\begin{align}\label{samsa_ax}
\varepsilon L {L \choose \varepsilon L} &\leq \varepsilon L  \left(\frac{e}{\varepsilon}\right)^{\varepsilon L} \nonumber\\
&\leq \varepsilon L \exp\left(C|\varepsilon \log{\varepsilon}|L\right) \nonumber\\
&\leq \exp\left(2C|\varepsilon \log{\varepsilon}|L\right) \nonumber\\
&\leq \exp\left(\frac{D_1L}{32}\right),
\end{align}
for all~\(n\) large, where~\(C> 0\) is a constant not depending on the choice of~\(\varepsilon,\) the constant~\(D_1 > 0\) is as in~(\ref{chimi_changa}), the penultimate bound in~(\ref{samsa_ax}) is true since~\(L \leq n\) is a constant multiple of~\(\frac{1}{p}\) by~(\ref{L_def}) and~\(\frac{1}{p} \rightarrow \infty\) by the statement of this Theorem and the final estimate in~(\ref{samsa_ax}) is valid if~\(\varepsilon  = \varepsilon(C,D_1)> 0\) is small enough, since~\(|x \log{x}| \rightarrow 0\) as~\(x \rightarrow 0.\)

The estimate~(\ref{samsa_ax}) holds for~\({\cal B}\) as well and so plugging~(\ref{samsa_ax}) and~(\ref{chimi_changa}) into~(\ref{m_ax_est}), we get that if the event~\(E_{vert}\) occurs (so that~\({\cal V}\) has at least~\(2L\) vertices), then
\begin{equation}\nonumber
\mathbb{P}\left(M_W \leq \varepsilon L  \mid {\cal V}\right) \leq \left(\exp\left(\frac{D_1L}{32}\right)\right)^2 \cdot \exp\left(-\frac{D_1L}{8}\right) = \exp\left(-\frac{D_1L}{16}\right)
\end{equation}
for some constant~\(D_2 > 0.\) Since~\(L\) is  of the order of~\(\frac{1}{p}\) by definition (see~(\ref{L_def})), we further get
\begin{equation}\nonumber
\mathbb{P}\left(M_W \leq \frac{D_3}{p}  \,\middle\vert\,  {\cal V}\right) \leq  \exp\left(-\frac{D_3}{p}\right)
\end{equation}
for some constant~\(D_3 > 0.\) Averaging over all possible~\({\cal V}\) and recalling that the above estimate is valid if~\(E_{vert}\) occurs, we get that
\begin{equation}\label{schmix}
\mathbb{P}\left(M_W \leq \frac{D_3}{p}  \,\middle\vert\,  E_{vert}\right) \leq  \exp\left(-\frac{D_3}{p}\right).
\end{equation}

For any two events~\(A\) and~\(B,\) we have that
\begin{align}
\mathbb{P}(A) &= \mathbb{P}(A \mid B) \mathbb{P}(B) + \mathbb{P}(A \mid B^c) \mathbb{P}(B^c) \nonumber\\
&\leq \mathbb{P}(A \mid B) + \mathbb{P}(B^c) \nonumber
\end{align}
and setting~\(A = \left\{M_W \leq \frac{D_3}{p}\right\}\) and~\(B = E_{vert}\) and combining~(\ref{schmix}) with the estimate~(\ref{n_i_est_bax}) for~\(E_{vert},\)  we finally get that
\begin{equation}\label{mi_est}
\mathbb{P}\left(M_W \leq \frac{D_3}{p}\right) \leq \exp\left(-\frac{D_3}{p}\right)   +  \exp\left(-D_1ns_n^2\right) \leq 2\exp\left(-\frac{D_4}{p}\right)
\end{equation}
for some constant~\(D_4 > 0,\) since~\(ns_n^2 = \frac{1}{p}\) by definition. Since the maximum size~\(M_i\) of a matching in~\(R_i\) is at least~\(M_W,\) this obtains~(\ref{match_i_est}).

The bound~(\ref{match_i_est}) estimates the probability that the maximum size~\(M_i\) of a matching in the square~\(R_i,\) is at least of the order of~\(\frac{1}{p}.\) There are~\(N = \frac{1}{s_n^2} = np \leq n\) squares in the tiling~\(\{R_i\}\) shown in Figure~\ref{fig_squares} and so setting \[E_{match} := \bigcap_{i=1}^{N} \left\{M_i \geq \frac{\lambda}{p}\right\}\] and invoking the union bound, we get that
\begin{equation}\label{e_match_est}
\mathbb{P}\left(E_{match}\right) \geq 1- n \cdot \exp\left(-\frac{\lambda}{p}\right).
\end{equation}
Recalling that~\(E_{con}\) denotes the event that the random graph~\(G\) is connected, we define~\(E_{net} := E_{con} \cap E_{match}\) and get from the connectivity estimate~(\ref{e_con_est_max}) and the union bound that
\begin{equation}\label{e_net_est_aabb}
\mathbb{P}\left(E_{net}\right) \geq  1-\exp\left(-\lambda_1np\right)-n \cdot \exp\left(-\frac{\lambda}{p}\right),
\end{equation}
for some constant~\(\lambda_1 > .\)

Suppose henceforth that~\(E_{net}\) occurs and for~\(1 \leq i \leq N = \frac{1}{s_n^2}  =np,\) let~\({\cal W}_i\) be a matching of maximum size in the~\(s_n \times s_n\) square~\(R_i\) so that each~\({\cal W}_i\) has at least~\(\frac{\lambda}{p}\) edges. The union~\({\cal W} := \bigcup_{i=1}^{N} {\cal W}_i\) is matching of~\(G\) containing~\(\frac{\lambda}{p} \cdot N = \lambda n\) edges. Arguing as in the discussion prior to~(\ref{azuma}), we extend~\({\cal W}\) to obtain a spanning tree~\({\cal S}_{fin}\) of~\(G.\) Moreover, the edges in~\({\cal W}\) have i.i.d.\ weights with bounded second moments and so  setting~\[V_{fin} := \sum_{h \in {\cal W}} W(h)\] be the total \emph{weight} of all edges in~\({\cal W},\) we use the Chebychev inequality and argue as in the discussion prior to~(\ref{e_net_est_max2}),  to get that
\begin{equation}\label{e_net_est_aabb_2}
\mathbb{P}\left(\{V_{fin} \geq \lambda_2 n \} \bigcap E_{net}\right) \geq 1-\frac{\lambda_3}{n} - \exp\left(-\lambda_1np\right)-n \cdot \exp\left(-\frac{\lambda}{p}\right)
\end{equation}
for some constants~\(\lambda_2,\lambda_3 >0.\)

Each edge in~\({\cal W}\) has length at most~\(s_n\sqrt{2} = \sqrt{\frac{2}{np}}\) and so an edge cost factor of at least~\(\left(\frac{1}{s_n\sqrt{2}}\right)^{\alpha} = \lambda_4 (np)^{\alpha/2}\) for some constant~\(\lambda_4 > 0.\) Consequently, arguing as in the discussion prior to~(\ref{chill_ammal}), we get that if~\(\{V_{fin} \geq \lambda_2 n \} \bigcap E_{net}\) occurs, then~\(\chi_n \geq \lambda_5 n (np)^{\alpha/2}\) for some constant~\(\lambda_5 > 0.\) Since~\(np \geq M\log{n}\) by Theorem statement, we choose the constant~\(M >0\) large enough so the estimate~(\ref{e_net_est_aabb_2}) obtains the desired bound~(\ref{dev_bds_mast_spat}). This completes the proof of part~\((a)\) of the Theorem.~\(\qed\)

%Let~\(F^{(fc)}(x) := \mathbb{P}\left(d^{-\alpha}(X_u, X_v) > x\right)\) be the edge cost factor ccdf and let~\(J^{(fc)}_c(z)\) be the inverse edge cost factor ccdf, as defined in~(\ref{h_c_def}). denote the edge cost facto

\emph{Proof of Theorem~\ref{thm_spat_mast}\((b)\)}: Our strategy is to use the proof strategy of Lemma~\ref{lemma_max_cst_up}  and begin with some preliminary computations. Indeed, the condition~(\ref{p_cond_new}) in the statement of Theorem~\ref{thm_max_cst_low} is trivially true with say~\(a_0 = \frac{1}{2}, b_0 = 1\) and~\(\gamma_0 = \frac{1}{4},\) since all edges have the same probability~\(p.\)

The next step is to establish that there is a constant~\(D > 0\) such that for all~\(x > D,\) we have
\begin{equation}\label{fct_scale}
\frac{1}{Dx^{2/\alpha}} \leq F^{(ct)}_c(x)  = \mathbb{P}\left(c(u,v) > x\right) \leq \frac{D}{x^{2/\alpha}},
\end{equation}
where we recall for convenience that~\(c(u,v) = d^{-\alpha}(X_u,X_v) W(u,v)\) is the cost of the edge~\((u,v)\) with endvertices~\(u\) and~\(v\) and~\(W(u,v)\) is the edge weight with ccdf~\(F_c(.).\) Constants here and henceforth do not depend on the constant~\(\gamma > 0\) in the statement of the Theorem. Also as a by product of the lower bound in~(\ref{fct_scale}), we have that
\[\mathbb{E}c^2(u,v) = \int x F^{(ct)}_c(x) \geq \int_{D}^{\infty} \frac{dx}{Dx^{-1+2/\alpha}}  = \infty,\] if~\(\alpha \geq 1.\) This demonstrates the unboundedness of the edge cost second moment for~\(\alpha \geq 1,\) as described in the discussion following the statement of Theorem~\ref{thm_spat_mast}.

%GET ALS LOWER BD AND EXPLAIN THAT  ALPHA > 1 UBDDD  CST ETC....

We begin by demonstrating that there are constants~\(C_1, C_2 > 0\) such that
\begin{equation}\label{ec_dst_est}
\mathbb{P}\left(d(X_u, X_v) \leq y \right) \geq C_1y^2 \text{ for all } 0 < y < \frac{1}{2}
\end{equation}
and
\begin{equation}\label{ec_dst_est_2}
\mathbb{P}\left(d(X_u, X_v) \leq y \right) \leq C_2y^2 \text{ for all }  y >0.
\end{equation}
Indeed, if~\(B(x,a)\) is the ball of radius~\(a\) centred at~\(x \in S,\) then given~\(X_u,\) we see that~\(X_v\) lies within distance~\(y\) from~\(X_u\) with probability~\(\int_{B(X_v,y) \cap S} f \leq \epsilon_2 \pi y^2,\) by the density upper bound in~(\ref{f_eq}). Averaging over~\(X_u,\) we get~(\ref{ec_dst_est_2}). Similarly, if~\( y < \frac{1}{2},\) then irrespective of the location~\(X_v,\) at least one quadrant of the ball~\(B(X_v,y)\) is contained in~\(S\) and so the lower density bound in~(\ref{f_eq}) implies that~\(\int_{B(X_v,y) \cap S} f \geq\frac{\epsilon_1 \pi y^2}{4}.\) Again averaging over~\(X_u,\) we get~(\ref{ec_dst_est}).

Using~(\ref{ec_dst_est}), we get the lower bound in~(\ref{fct_scale}) as follows. Let~\(w_0 > 0\) be small enough so that~\[\mathbb{P}\left(W(u,v) \geq w_0\right) \geq \frac{1}{2}.\] We then get
\begin{align}
\mathbb{P}\left(c(u,v) > x\right) &= \mathbb{P}\left(d^{-\alpha}(X_u, X_v)W(u,v) > x\right) \nonumber\\
&\geq  \mathbb{P}\left(d^{-\alpha}(X_u, X_v)W(u,v) > x, W(u,v) \geq w_0\right) \nonumber\\
&\geq  \mathbb{P}\left(d^{-\alpha}(X_u, X_v) > \frac{x}{w_0}, W(u,v) \geq w_0\right)\nonumber\\
&= \mathbb{P}\left(d^{-\alpha}(X_u, X_v) > \frac{x}{w_0}\right)\mathbb{P}\left(W(u,v) \geq w_0\right) \nonumber\\
&\geq \frac{1}{2}\mathbb{P}\left(d^{-\alpha}(X_u, X_v) > \frac{x}{w_0}\right) \nonumber\\
&= \frac{1}{2}\mathbb{P}\left(d(X_u, X_v) < \left(\frac{w_0}{x}\right)^{1/\alpha}\right) \nonumber
\end{align}
For~\(x > 2w_0,\) the estimate in~(\ref{ec_dst_est}) obtains the lower bound in~(\ref{f_scale}).

For the upper bound in~(\ref{f_scale}), we argue as follows. The edge weights have bounded~\(q^{th}\) moment for some~\(q \geq \frac{2}{\alpha}+1,\) by Theorem statement and so recalling that~\(F_c\) denotes the edge weight ccdf, we have that \[\mathbb{E}W^{1+2/\alpha}(u,v) = \int x^{2/\alpha} F_c(x) dx  < \infty.\] Splitting the integral, we get that \[\int x^{2/\alpha} F_c(x) dx = \sum_{k \geq 1} \int_{x=k-1}^{k} x^{2/\alpha} F_c(x) dx \geq \sum_{k \geq 1} (k-1)^{2/\alpha} F_c(k+1)\] is finite. Further using~\((a+b)^{z} \leq 2^{z}(a^{z} + b^{z}) \leq 2^{z+1} a^{z}\) with~\(a=k-1\) and~\(b=1,\) we get that
\begin{equation}\label{skilma}
\sum_{k \geq 1} k^{2/\alpha} \cdot F_c(k)  < \infty.
\end{equation}

Using~(\ref{skilma}) and~(\ref{ec_dst_est_2}), we prove~(\ref{fct_scale}) as via a chain of relations as follows:
\begin{align}
\mathbb{P}\left(c(u,v) > x\right) &= \mathbb{P}\left(d^{-\alpha}(X_u,X_v) W(u,v) > x\right) \nonumber\\
&= \sum_{k \geq 0} \mathbb{P}\left(d^{-\alpha}(X_u,X_v)W(u,v) > x,\;k \leq W(u,v) < k+1\right) \nonumber\\
&\leq \sum_{k \geq 0 } \mathbb{P}\left(d^{-\alpha}(X_u,X_v) > \frac{x}{k+1},\; k \leq W(u,v) < k+1\right)\nonumber\\
&= \sum_{k \geq 0 } \mathbb{P}\left(d^{-\alpha}(X_u,X_v) > \frac{x}{k+1}\right) \mathbb{P}\left(k \leq W(u,v) < k+1\right)\nonumber\\
&\leq \sum_{k \geq 0 } \mathbb{P}\left(d^{\alpha}(X_u,X_v) < \frac{k+1}{x}\right) F_c(k) \nonumber\\
&= \sum_{k \geq 0 } \mathbb{P}\left(d(X_u,X_v) < \left(\frac{k+1}{x}\right)^{1/\alpha}\right) F_c(k) \nonumber\\
&\leq \sum_{k \geq 0 } C_2 \left(\frac{k+1}{x}\right)^{2/\alpha} F_c(k) \label{skisha_2}\\
&\leq \frac{C_3}{x^{2/\alpha}},\label{skisha}
\end{align}
for some constant~\(C_3 > 0,\) where~\(C_2 > 0\) is the constant in~(\ref{ec_dst_est_2}) and~(\ref{skisha}) is a consequence of the summation condition~(\ref{skilma}) in the statement of this Theorem. This proves~(\ref{fct_scale}).

The estimates~(\ref{fct_scale}) and~(\ref{ec_dst_est}) together facilitate the application of the proof strategy of Lemma~\ref{lemma_max_cst_up}. Indeed, for~\(j \geq 0\) say that an edge~\((u,v)\) is~\(j-\)\emph{bad} if its cost~\[c(u,v) \in [2j\cdot(np)^{\alpha/2}, 2(j+1)\cdot (np)^{\alpha/2}).\] Using~(\ref{fct_scale}) and the fact that~\(s := \frac{2}{\alpha} > 2,\) we argue as in the derivation of~(\ref{priscille_tits22_ax}) to get that
\begin{equation}\label{priscille_tits33_ax}
\chi_n \leq 2n(np)^{\alpha/2} + \sum_{j \geq 1} 2(j+1)(np)^{\alpha/2}N_{bad}(j),
\end{equation}
where~\(N_{bad}(j)\) is the total number of~\(j-\)bad edges in the random graph~\(G.\) Further following the analysis preceding~(\ref{ewn_pen_fin}), we get that
\begin{equation}\label{ewn_pen_fin_22_ax_ax}
\mathbb{E}\chi_n \leq Dn(np)^{\alpha/2} + D n^2e^{-D_1np}
\end{equation}
for some constants~\(D,D_1 > 0.\) Since~\(np \geq \gamma \log{n},\) we choose~\(\gamma >0\) large enough  to obtain the desired expectation upper  bound for~\(\chi_n\) in the Theorem statement. A direct application of the Markov inequality then implies the deviation upper bound~(\ref{dev_bds_mast_spat_up}) for~\(\chi_n\) for the case~\(\frac{2}{3} < \alpha < 1.\)

If~\(\alpha < \frac{2}{3},\) then again using~(\ref{fct_scale}) and the fact that~\(s = \frac{2}{\alpha} > 3,\) we argue as in the variance bound for~\(\chi_n\) in~(\ref{exp_bound_up_mast}) to get  that~\(var(\chi_n) \leq C n^2p (np)^{\alpha}\)
for~\(C > 0\) constant. Using the Chebychev inequality and arguing as in the derivation of the estimate~(\ref{skida_c}) in the proof of Theorem~\ref{thm_max_cst_up},  we get the deviation upper bound~(\ref{dev_bds_mast_spat_up}) for~\(\chi_n\) for the case~\(0 < \alpha < \frac{2}{3}.\) This completes the proof of the Theorem.~\(\qed\)

%\begin{equation}\label{exp_bound_up_mast}
%\mathbb{E}\chi_n \leq \gamma n  \cdot J_c(np)\;\;\;\text{ and }\;\;\; var(\chi_n) \leq  \gamma n^2p   \cdot J_c^2(np).
%\end{equation}

%Also suppose there are constants~\(C_0,x_0 > 0\) and~\(s  > 2\) such that the edge \emph{cost} ccdf~\(F_c^{(ct)}\) satisfies the scaling relation~(\ref{f_scale}) for all~\(a > 1\) and all~\(x  >x_0.\) There are constants~\(M,\gamma >0\) such that if~\(p \geq \frac{M\log{n}}{n},\) then
%\begin{equation}\label{exp_bound_up_mast}
%\mathbb{E}\chi_n \leq \gamma n  \cdot J_c(np)\;\;\;\text{ and }\;\;\; var(\chi_n) \leq  \gamma n^2p   \cdot J_c^2(np).
%\end{equation}

\renewcommand{\theequation}{\thesection.\arabic{equation}}
\setcounter{equation}{0}
\section{Proof of Corollaries~\ref{cor_repeat_mast} and~\ref{cor_example_two}}\label{sec_pf_mast_cor}
\emph{Proof of Corollary~\ref{cor_repeat_mast}}: We see that the graph~\(G\) is homogenous with edge probability~\(p\) and so~(\ref{p_cond_new}) is satisfied. Moreover,~\(r(X_i,X_j) \leq 1\) by definition and~\[\mathbb{E}r(X_i,X_j) \geq \mathbb{P}\left(\{X_i=0\} \cup \{X_j =0\} \right) \geq 2\epsilon_0 - \epsilon_0^2 > 0\] and so~(\ref{cost_fact_cond}) also holds. The edge weight ccdf~\(F_c(x) = e^{-x}\) for~\(x > 0\) and so the inverse ccdf~\(H_c(.)\) as defined in~(\ref{h_c_def}) satisfies~\(H_c(z) = \log{z}\) for~\(z > 1.\) Plugging this into~(\ref{dev_bound_mast_low}), we see that
\begin{equation}\label{faap_Two}
\mathbb{P}\left(E_{con} \bigcap \{\chi_n \geq Dn \log(np)\}\right) = 1-o(1),
\end{equation}
for some constant~\(D > 0.\)

We now use Theorem~\ref{thm_max_cst_up} to demonstrate the near optimality of~(\ref{faap_Two}). As a first step, we ensure that the conditions~\((a)-(b)\) are satisfied. Indeed, the edge weights are exponential, i.e.,~\(F_c(x) = e^{-x}\) for~\(x > 0\) and so for any~\(a, x > 1\) and~\(s > 0\) we have that
\[e^{-ax} = e^{-(a-1)x}e^{-x} \leq e^{-(a-1)} e^{-x} \leq \frac{C}{a^s} e^{-x}\]
for some sufficiently large constant~\(C = C(a,s) > 0.\) Thus~(\ref{f_scale}) is true. Moreover, from the definition of the cost factor in~(\ref{cost_def_obi}), we see that~\(r(X_i,X_j) \leq 1\) and so the cost~\(c(i,j)\) of the edge~\((i,j)\) satisfies~\(c(i,j) \leq W(i,j).\) For~\(x > 0,\) this implies that
\[F_c^{(ct)}(x) = \mathbb{P}(c(i,j) > x) \leq \mathbb{P}\left(W(i,j) > x\right) = F_c(x)\] and so the domination condition~(\ref{dom_cond}) also holds. Consequently, the bound~(\ref{dev_bound_mast_up}) implies that
\begin{equation}\label{faap_three}
\mathbb{P}\left(E_{con} \bigcap \{\chi_n \leq D_0n \log(np)\}\right) = 1-o(1)
\end{equation}
for some constant~\(D_0 > 0.\) Combining with~(\ref{faap_Two}), we obtain the deviation bounds in~(\ref{faap_gen}).

%EXP BDS HEEEE!!!!

Finally, the variance bound in~(\ref{dev_bound_exp_up}) implies that~\(var(\chi_n) \leq \theta n^2p \left(\log(np)\right)^2\) for some constant~\(\theta > 0\) and the lower deviation bound in~(\ref{faap_Two}) gives us that~\(\mathbb{E}\chi_n \geq \theta_0 n \log(np)\) for some constant~\(\theta_0 > 0.\) Therefore
\[\mathbb{E}\left(\frac{\chi_n}{\mathbb{E}\chi_n}-1\right)^2 \leq \frac{\theta }{\theta_0^2} \cdot p \rightarrow 0,\] provided~\(p = o(1).\) ~\(\qed\)

\emph{Proof of Corollary~\ref{cor_example_two}\((a)\)}: We verify that the conditions in Theorem~\ref{thm_max_cst_low}-\ref{thm_max_cst_up} hold in that order. Clearly, since~\(p(u,v)= p \geq \frac{1}{n^{\beta}},\) the condition~(\ref{p_cond_new}) holds, say, with~\(a_0 = \gamma_0 = \frac{3}{4}\) and~\(b_0 = 1.\)
By Corollary statement, the edge weight ccdf~\(F_c(x)\) is continuous for all large~\(x\) and by definition, the edge cost factor satisfies~\[r(X_u,X_v) = d^{-\alpha}(X_u,X_v) \geq (\sqrt{2})^{-\alpha},\] since the distance between any two points in the unit square is at most~\(\sqrt{2}.\) This obtains the lower expectation bound for the cost factor in~(\ref{cost_fact_cond}).

To get the upper bound in~(\ref{cost_fact_cond}), we use the estimate~(\ref{ec_dst_est_2}) to deduce that
\begin{equation}\label{dim_dim}
\mathbb{P}\left(d^{-\alpha}(X_u,X_v)>x\right) \leq\frac{D}{x^{2/\alpha}} \text{ for all }x >x_0,
\end{equation}
where~\(D, x_0 > 0\) are large constants not depending on~\(x.\) Consequently~\(\mathbb{E}r^2(X_u, X_v) = \mathbb{E}d^{-2\alpha}(X_u, X_v)\) is at most \[ x_0^2 + \int_{x_0}^{\infty} x\mathbb{P}\left(d^{-\alpha}(X_u, X_v)  > x\right) dx \leq x_0 +\int_{x_0}^{\infty}\frac{\pi \epsilon_2}{x^{2/\alpha-1}} < \infty,\] since~\(\alpha < 1.\) Thus the bounds in~(\ref{cost_fact_cond}) are satisfied and so the  conditions in Theorem~\ref{thm_max_cst_low} hold.

Next, to verify the scaling condition~(\ref{f_scale}) in the statement of Theorem~\ref{thm_max_cst_up}, we use  the bounds for the edge weight ccdf in~(\ref{heav_tail_ccdf}). Indeed, if~\(a > 1\) and~\(x > 0\) is large, then~(\ref{heav_tail_ccdf}) implies that \[F_c(ax) \leq \frac{A_2}{a^{s}x^{s}}=  \frac{A_2/A_1}{a^{s}} \cdot \frac{A_1}{x^s} \leq \frac{A_2/A_1}{a^{s}}  F_c(x).\] Thus~(\ref{f_scale}) is true.

Finally, to see if the domination condition~(\ref{dom_cond}) is true, we first show that if there are constants~\(D_1,D_2 > 0\) such that the edge weight ccdf~\(F_c\) satisfies
\begin{equation}\label{aazhi}
\sum_{k\geq 1} F_c\left(\frac{x}{k}\right) \cdot \frac{1}{k^{2/\alpha}} \leq D_1 F_c(D_2x)
\end{equation}
for all~\(x>0,\) then~(\ref{dom_cond}) holds. Later we verify that~(\ref{aazhi}) indeed holds in the current example.

Recalling that~\(c(u,v) = d^{-\alpha}(X_u,X_v)W(u,v)\) is the cost of the edge~\((u,v)\) with endvertices~\(u\) and~\(v,\) we have that
\begin{align}
\mathbb{P}\left(c(u,v) > x\right) &= \sum_{k \geq 0 } \mathbb{P}\left(d^{-\alpha}(X_u,X_v)W(u,v) > x, k \leq  d^{-\alpha}(u, v) < k+1\right) \nonumber\\
&\leq \sum_{k \geq 0 } \mathbb{P}\left(W(u,v) > \frac{x}{k+1}, k \leq  d^{-\alpha}(u, v) < k+1\right)\nonumber\\
&=\sum_{k \geq 0 } F_c\left(\frac{x}{k+1}\right) \mathbb{P}\left(k \leq  d^{-\alpha}(u, v) < k+1\right)\nonumber\\
&\leq \sum_{k \geq 0 } F_c\left(\frac{x}{k+1}\right) \mathbb{P}\left(d^{-\alpha}(u, v) \geq k\right)\nonumber\\
&\leq F_c(x) + \sum_{k \geq 1} F_c\left(\frac{x}{k+1}\right)\frac{D}{k^{2/\alpha}}\nonumber
\end{align}
for some constant~\(D > 0,\) by~(\ref{dim_dim}). Using~(\ref{aazhi}), we then get that~(\ref{dom_cond}) holds.

Finally, it remains to verify that~(\ref{aazhi}) is true. The ccdf bounds in~(\ref{heav_tail_ccdf}) imply that~\(\frac{B_1}{y^{s}} \leq F_c(y) \leq \frac{B_2}{y^s}\) for \emph{all}~\(y > 1\) and some constants~\(B_1,B_2 > 0\) and so if~\(k \leq x,\) then we get that~\[F_c\left(\frac{x}{k}\right) \leq \frac{B_2 k^{s}}{x^s}.\] For~\(k >x,\) we simply use~\(F_c(.) \leq 1.\) Splitting the summation in~(\ref{aazhi}) into~\(\sum_{k \leq x}\) and~\(\sum_{k > x}\) we then get that
\[ \sum_{k\geq 1} F_c\left(\frac{x}{k}\right) \cdot \frac{1}{k^{2/\alpha}} \leq I_1 + I_2,  \]
where \[I_1 := \sum_{k \leq x} \frac{B_2 k^{s}}{x^s} \cdot \frac{1}{k^{2/\alpha}} \text{ and } I_2 := \sum_{k > x} \frac{1}{k^{2/\alpha}}.\]

We have that \[I_1 \leq \frac{B_2}{x^s} \sum_{k \geq 1} \frac{1}{k^{2/\alpha-s}} \leq \frac{B_3}{x^s}\] for some constant~\(B_3 > 0,\) since~\(\frac{2}{\alpha} > s+1.\) Similarly, comparing with integrals we have that \[I_2 \leq B_4\int_{x}^{\infty} \frac{dy}{y^{2/\alpha}} \leq \frac{B_5}{x^{2/\alpha-1}},\] for some constants~\(B_4,B_5 > 0,\) not depending on the choice of~\(x.\) Combining these two estimates and using the fact that~\(s < \frac{2}{\alpha}-1,\) we get that
\[\sum_{k\geq 1} F_c\left(\frac{x}{k}\right) \cdot \frac{1}{k^{2/\alpha}} \leq \frac{B_3}{x^s} + \frac{B_5}{x^{2/\alpha-1}} \leq \frac{B_6}{x^{s}} \leq B_7F_c(x),\] for some constants~\(B_6,B_7> 0,\) again not depending on the choice of~\(x.\) Thus~(\ref{aazhi}) is true.

The inverse ccdf~\(H_c(z)\) defined in~(\ref{h_c_def}) satisfies
\begin{equation}\label{h_c_exp}
\gamma_1 z^{1/s} \leq H_c(z) \leq \gamma_2 z^{1/s}
\end{equation} for some constants~\(\gamma_1,\gamma_2 > 0\) and all~\(z\) large and so from Theorems~\ref{thm_max_cst_low}-\ref{thm_max_cst_up}, we get the deviation and expectation bounds in the Corollary statement.

To get~\(L^2-\)convergence, we use the variance bound in Theorem~\ref{thm_max_cst_up} which states that~\(var(\chi_n) \leq D n^2p H_c^2(np)\) for constant~\(D > 0,\) together with the expectation lower bound derived above that gives~\(\mathbb{E}\chi_n \geq D_1 n H_c(np),\) again for some constant~\(D_1 > 0.\) Combining these, we get that~\[\mathbb{E}\left(\frac{\chi_n}{\mathbb{E}\chi_n}-1\right)^2 \leq D_2 p = o(1)\] for some constant~\(D_2 > 0\) and this completes the proof of part~\((a)\) of the Corollary.~\(\qed\)

\emph{Proof of Corollary~\ref{cor_example_two}\((b)\)}: We verify that the conditions in Theorem~\ref{thm_spat_mast} hold and first consider the case~\( \frac{2}{s-1} < \alpha < 1.\) In this case~\(s > 3\) and so the edge weights have bounded second moments, implying that the conditions in~(\ref{edge_vt_ax}) are satisfied. Moreover~\(p = \frac{1}{n^{\beta}}, 0 <\beta < 1,\) ensures that the lower deviation bound in~(\ref{dev_bds_mast_spat}) holds with high probability.

In fact the condition~\(\frac{2}{s-1} < \alpha\) states that~\(s > \frac{2}{\alpha}+1\) and so the edge weights have bounded~\(r^{th}\) moment for any~\(s > r > \frac{2}{\alpha}+1:\) Indeed \[\mathbb{P}\left(W(u,v) > x\right) \leq \frac{D}{x^{s}}\] for some constant~\(D > 0\) and all~\(x > x_0\) large and so \begin{align}
\mathbb{E}W^{r}(u,v) &\leq x_0^{r} +   \int_{x_0}^{\infty} x^{r-1} F_c(x) dx \nonumber\\
&\leq x_0^{r} +\int_{x_0}^{\infty} \frac{D}{x^{s-r+1}} dx
\end{align}
which is finite, since~\(r < s\) strictly. Thus the upper deviation and expectation bounds in~(\ref{dev_bds_mast_spat_up}) hold as well.  Combining with the discussion in the above paragraph, we get the deviation and expectation bounds for~\(\tau_n\) in Corollary statement.

Finally, if~\(\alpha < \frac{2}{3},\) then Theorem~\ref{thm_spat_mast} implies that~\(var(\chi_n) \leq D_1 n^2p \cdot (np)^{\alpha}\) for some constant~\(D_1 > 0\) and combining this with the lower expectation bound for~\(\chi_n\) derived above, implies that \[\mathbb{E}\left(\frac{\chi_n}{\mathbb{E}\chi_n} - 1\right)^2 \leq D_2p = o(1),\] for some constant~\(D_2 > 0.\) This obtains the~\(L^2-\)convergence of~\(\tau_n,\) appropriately scaled and centred, and  therefore completes the proof of the Corollary.~\(\qed\)

\renewcommand{\theequation}{\thesection.\arabic{equation}}
\setcounter{equation}{0}
\section{Proof of Theorem~\ref{thm_min_cst_weak}}\label{sec_pf_min_weak}
We begin with a generic result for the MST cost lower bound in terms of the edge \emph{cost} inverse ccdf~\(J(.).\)
\begin{lemma}\label{lemma_mst_low} Suppose the condition~(\ref{p_cond_new}) in the statement of Theorem~\ref{thm_max_cst_low} holds with~\(0 < p = p(n) < 1.\) There are constants~\(\theta_1,\theta_2 > 0\) such that if~\(p \geq \frac{\theta_1 \log{n}}{n},\) then
\begin{equation}\label{mst_low_ax}
\mathbb{P}\left(\tau_n \geq \theta_2 n J\left(\frac{\theta_2}{np}\right)\right) \geq 1-\theta_1 \cdot p,
\end{equation}
where~\(J(.)\) is the inverse cost cdf as defined in~(\ref{h_def}).
\end{lemma}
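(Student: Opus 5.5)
The plan is to lower bound $\tau_n$ by counting vertices all of whose incident edges in $G$ are expensive. Set $y_n := J\left(\frac{\theta_2}{np}\right)$, where $\theta_2>0$ is a small constant to be chosen. Call an edge $h$ of $K_n$ \emph{light} if $Z(h)=1$ and $c(h)\leq y_n$. By the definition of $J$ in~(\ref{h_def}) and right continuity of $F^{(ct)}$, we have $\mathbb{P}(c(h)\leq y_n) = F^{(ct)}(y_n)\leq \frac{\theta_2}{np}$. Let $N_{light}$ be the number of light edges. A vertex not incident to any light edge has every incident edge of $G$ of cost $>y_n$. The spanning tree must use at least one of these edges at each such vertex, and each tree edge is counted for at most two vertices. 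Hence, on $E_{con}$,
\[\tau_n \geq \frac{y_n}{2}\left(n - 2N_{light}\right).\]
So it suffices to show $N_{light}\leq \frac{n}{4}$ with probability at least $1-\theta_1 p$. Then $\tau_n\geq \frac{n y_n}{4}$, and after shrinking $\theta_2$ this gives~(\ref{mst_low_ax}) in the stated form.

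For the mean, condition~(\ref{p_cond_new}) (via Lemma~\ref{lemma_deg}\((b)\), $p_{up}\leq 2b_0p$) gives $\sum_{h}p(h)\leq b_0 n^2 p$. Since the state $Z(h)$ is independent of the cost $c(h)$, we get $\mathbb{E}N_{light}\leq b_0n^2p\cdot\frac{\theta_2}{np} = b_0\theta_2 n\leq \frac{n}{8}$ once $\theta_2$ is small.

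For the variance, write $N_{light}=\sum_h \ind(h \text{ light})$. Two edges with disjoint endvertices have independent light-indicators, because their costs use disjoint marks and distinct weights and states. So only pairs sharing a vertex contribute covariance, and there are at most $n^3$ such ordered pairs. For such a pair $h,h'$ the two states are independent of each other and of the costs, so
\[\mathbb{P}(h,h' \text{ both light})\leq p(h)p(h')\,\mathbb{P}(c(h)\leq y_n)\leq (2b_0)^2p^2\cdot\frac{\theta_2}{np},\]
where I use the crude bound $p(u,v)\leq 2b_0 p$ after averaging (this step is the one to double check: condition~(\ref{p_cond_new}) only controls row sums, so I would instead bound $\sum_{u}\sum_{v,w}p(u,v)p(u,w)\leq n(b_0np)^2$ directly). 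This gives $var(N_{light})\leq \mathbb{E}N_{light} + D n\cdot (np)^2\cdot\frac{1}{np} \leq D' n^2 p$.

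Chebyshev's inequality then yields $\mathbb{P}(N_{light}>\frac{n}{4})\leq \frac{D'n^2p}{(n/8)^2}= D''p$. Finally, Lemma~\ref{lemma_deg}\((b)\) gives $\mathbb{P}(E_{con}^c)\leq e^{-a_0np/4}\leq p$ when $np\geq\theta_1\log n$ with $\theta_1$ large. On $E_{con}$ the largest component is all of $G$, so $\tau_n$ is the MST cost of $G$ and the counting bound above applies. A union bound finishes the proof.

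The main obstacle I expect is the covariance step. The light-indicators of edges sharing a vertex are dependent through the common mark, and the inhomogeneous edge probabilities must be handled using only the row-sum control in~(\ref{p_cond_new}).
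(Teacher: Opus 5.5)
Your proposal is correct and follows essentially the paper's route: Chebyshev applied to the number of cheap edges of \(G\), with only pairs of edges sharing an endvertex contributing covariance, followed by a count showing that on \(E_{con}\) every spanning tree has order \(n\) edges above the threshold. The differences are minor: the paper computes the moments conditionally on a realization in \(E_{deg}\) rather than via the row sums in~(\ref{p_cond_new}), and it bounds \(\tau_n \geq (n-1-N_{bad})\cdot\frac{1}{2}J\left(\frac{\varepsilon}{np}\right)\) directly instead of using your vertex count. One small fix is needed: right continuity of the cdf \(F^{(ct)}\) gives \(F^{(ct)}(J(z))\geq z\), not \(\leq z\), when the maximum in~(\ref{h_def}) is only a supremum (an atom of \(c(h)\) at \(J(z)\) breaks your bound), which is why the paper uses the threshold \(\frac{1}{2}J(\cdot)\); replacing \(y_n\) by \(y_n/2\) costs only a constant.
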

HEE1??

\emph{Proof of Lemma~\ref{lemma_mst_low}}: For~\(\varepsilon > 0,\) say that an edge~\(h = (u,v)\) is \emph{bad} if its cost~\[c(h) \leq \frac{1}{2} J\left(\frac{\varepsilon}{np}\right),\] where~\(J(.)\) is the inverse ccdf defined in~(\ref{h_def}). Our strategy to obtain the lower deviation bound for~\(\tau_n\) is as follows: We demonstrate that if condition~\((I)\) in Theorem statement occurs, then with high probability, i.e., with probability~\(1-o(1),\) the random graph~\(G\) is connected and contains~\(O(\varepsilon n)\) bad edges. This would then imply that the total cost of the edges in \emph{any} spanning tree of~\(G\) is at least
\[(n-1- O(\varepsilon n)) \cdot \frac{1}{2} J\left(\frac{\varepsilon}{np}\right),\] completing the proof of the Lemma. Details follow.

Letting
\begin{equation}\label{n_bad_tot}
N_{bad} := \sum_{h \in G} \ind(h \text{ is bad})
\end{equation}
be the total number of edges in~\(G,\) we begin by estimating~\(N_{bad}.\) By the definition of the inverse cost ccdf~\(J(.)\) in~(\ref{h_def}), we see that
\begin{equation}\label{bad_edge}
\mathbb{P}\left(h \text{ is bad}\right) \leq \frac{\varepsilon}{np}.
\end{equation}

Let~\(E_{deg} := E_{deg}(\emptyset)\) be the event defined in the proof of Lemma~\ref{lemma_deg}\((a)\) in Section~\ref{sec_prelim} that ensures that each vertex in~\(G\) has degree at most~\(2B_0 np\) where~\(B_0 > 0\) is the constant in the condition~(\ref{p_cond_max_ax}). As in the proof of Theorem~\ref{thm_max_cst_low}, condition~\((I)\) in Theorem statement implies that~(\ref{p_cond_max_ax}) holds and so the estimate~(\ref{e_deg_def_ax2}) implies that there is a constant~\(D > 0\) such that
\begin{equation}\label{e_deg_ax_ax}
\mathbb{P}(E_{deg}) \geq 1-\exp(-Dnp).
\end{equation}

Henceforth we let~\(\omega \in E_{deg}\) be any realization and let \[\mathbb{P}_{\omega}(.) := \mathbb{P}\left(. \mid \omega\right)\] be the probability distribution conditioned on the realization~\(\omega.\) By definition, the degree of each vertex in the random graph~\(G = G(\omega)\) is no more than~\(2B_0np\) and so by the standard handshaking argument, the total number of edges in~\(G\) is at most~\(B_0n^2p.\) Consequently, we get from~(\ref{bad_edge}) that
\begin{equation}\label{n_bad_est}
\mathbb{E}_{\omega}N_{bad} \leq B_0 n^2p \cdot \frac{\varepsilon}{np} = \varepsilon B_0 n.
\end{equation}

To obtain a high probability estimate for~\(N_{bad},\) we also estimate its variance. Indeed, letting~\(A_h\) denote the event that the edge~\(h\) of the complete graph~\(K_n\) is bad, we get that~\(N_{bad} = \sum_{h \in G} \ind(A_h).\) Consequently,
\begin{align}
var_{\omega}(N_{bad}) &:= \mathbb{E}_{\omega} N^2_{bad} -\left(\mathbb{E}_{\omega}N_{bad}\right)^2 \nonumber\\
&= \sum_{h \in G} I_1(h) + \sum_{h_1 \neq h_2 \in G} I_2(h_1,h_2), \label{var_n_bad_expr}
\end{align}
where
\[I_1(h) := \mathbb{P}_{\omega}(A_h) - \mathbb{P}^2_{\omega}(A_h)\]
and
\[I_2(h_1,h_2) := \mathbb{P}_{\omega}(A_{h_1} \cap A_{h_2}) - \mathbb{P}_{\omega}(A_{h_1}) \mathbb{P}_{\omega}(A_{h_2}).\]

Clearly,~\(I_1(h) \leq \mathbb{P}_{\omega}(A_h)\) and so the first term in~(\ref{var_n_bad_expr}) satisfies
\begin{equation}\label{first_term_est}
I_1 := \sum_{h \in G} I_1(h) \leq \sum_{h \in G} \mathbb{P}_{\omega}(A_h) = \mathbb{E}_{\omega} N_{bad}.
\end{equation}
To evaluate~\(I_2(h_1,h_2),\) we see that the events~\(A_{h_1}\) and~\(A_{h_2}\) are \emph{independent} if the edges~\(h_1\) and~\(h_2\) do not share an endvertex. Letting~\(h_1 \sim h_2\) denote that~\(h_1\) and~\(h_2\) share an endvertex, the second term in~(\ref{var_n_bad_expr}) is therefore rewritten as
\begin{align}
I_2 &:= \sum_{h_1 \in G} \sum_{h_2 \sim h_1} I_2(h_1,h_2) \nonumber\\
&\leq \sum_{h_1 \in G} \sum_{h_2 \sim h_1} \mathbb{P}_{\omega}(A_{h_1} \cap A_{h_2}) \nonumber\\
&\leq \sum_{h_1 \in G} \sum_{h_2 \sim h_1} \mathbb{P}_{\omega}(A_{h_1}). \nonumber
\end{align}
Since the degree of each vertex in~\(G\) is at most~\(2B_0np,\) we see that there are at most~\(4B_0np\) edges in~\(G\) that share an endvertex with~\(h_1\) and so
\begin{align}
I_2 &\leq 4B_0np\sum_{h_1 \in G} \mathbb{P}_{\omega}(A_{h_1}) \nonumber\\
&= 4B_0np \mathbb{E}_{\omega} N_{bad}. \label{i_2_est}
\end{align}

Plugging~(\ref{i_2_est}) and~(\ref{first_term_est}) into the variance expression~(\ref{var_n_bad_expr}), we get that
\begin{equation}
var_{\omega}(N_{bad}) \leq \mathbb{E}_{\omega}(N_{bad}) \left(1+4B_0np\right) \leq 5B_0np \mathbb{E}_{\omega}N_{bad}. \nonumber
\end{equation}
Applying the Chebychev inequality, we then get for~\(t > 0\) that
\begin{align}
\mathbb{P}_{\omega}\left(|N_{bad} - \mathbb{E}_{\omega} N_{bad}| \geq t\right) &\leq \frac{var_{\omega}(N_{bad})}{t^2} \nonumber\\
&\leq \frac{5B_0np}{t^2} \cdot \mathbb{E}_{\omega}(N_{bad}) \label{jalp_ax}
\end{align}
Setting~\(t= \varepsilon B_0 n\) and recalling the estimate~(\ref{n_bad_est}) for~\(\mathbb{E}_{\omega} N_{bad}\) we then get that
\[\mathbb{P}_{\omega}\left(N_{bad} \geq 2\varepsilon B_0n\right) \leq \frac{5p}{\varepsilon B_0}\]
for all realizations~\(\omega \in E_{deg}.\)

Averaging over the realizations in~\(E_{deg}\) and using the estimate~(\ref{e_deg_ax_ax}), we then get
\begin{align}
\mathbb{P}\left(N_{bad} \leq 2\varepsilon B_0 n\right) &\geq \left(1-\frac{5p}{\varepsilon B_0}\right)\left(1-\exp\left(-Dnp\right)\right) \nonumber\\
&\geq 1-\frac{5p}{\varepsilon B_0} - e^{-Dnp}. \label{n_bad_est_two}
\end{align}
We recall that the estimate~(\ref{e_con_est_max}) in Lemma~\ref{lemma_deg} of Section~\ref{sec_prelim}  obtains bounds for the event~\(E_{con}\) that~\(G\) is connected and implies that
\begin{equation}\label{e_con_est_new_ax}
\mathbb{P}(E_{con}) \geq 1-\exp\left(-Cnp\right)
\end{equation}
for some constant~\(C > 0\) and so the union bound implies that
\begin{equation} \nonumber
\mathbb{P}\left(E_{con} \bigcap \left\{N_{bad} \leq 2\varepsilon B_0 n\right\}\right) \geq 1-\frac{5p}{\varepsilon B_0} - e^{-Dnp} - e^{-Cnp}
\end{equation}
Recalling that~\(p \geq \frac{M\log{n}}{n},\) we choose the constant~\(M > 0\) larger if necessary and ensure that
\begin{equation}\label{e_con_joint}
\mathbb{P}\left(E_{con} \bigcap \left\{N_{bad} \leq 2\varepsilon B_0 n\right\}\right) \geq 1-D_1 p
\end{equation}
for some constant~\(D_1 > 0\) and all~\(n\) large.

If~\(E_{con} \cap \left\{N_{bad} \leq 2\varepsilon B_0n \right\}\) occurs, then the random graph~\(G\) is connected and any spanning tree~\({\cal T}\) of~\(G\) has~\(n-1\) edges. Since at most~\(2\varepsilon B_0 n\) of the edges in~\({\cal T}\) have cost less than~\(\frac{1}{2}J\left(\frac{\varepsilon}{np}\right),\) we get that the total cost of the edges in~\({\cal T}\) is at least
\begin{equation}\nonumber
\left(n-1-2\varepsilon B_0n\right) \cdot \frac{1}{2} J\left(\frac{\varepsilon}{np}\right) \geq \frac{n}{4} J\left(\frac{\varepsilon}{np}\right),
\end{equation}
provided we fix~\(\varepsilon > 0\) is small enough. Combining this with the estimate~(\ref{e_con_joint}), we get the desired bound~(\ref{mst_low_ax}) in Lemma statement. This completes the proof of the Lemma.~\(\qed\)

\emph{Proof of Theorem~\ref{thm_min_cst_weak}}: We first obtain the lower and upper deviation bounds for~\(\tau_n\) in that order below and then argue that the expectation bounds follow as a direct consequence. From Lemma~\ref{lemma_mst_low}, we already have a lower bound for the MST cost~\(\tau_n\) in terms of the inverse edge cost cdf~\(J(.).\) We now use the domination condition~\((II)\) to relate~\(J(.)\) with the inverse edge \emph{weight} cdf~\(H(.).\) Indeed, letting~\(c_1,c_2 > 0\) be the constants in~(\ref{sandwich_cond}), we see that  if~\(0 < c_1z < 1\) and~\(z < 1,\) then \[\mathbb{P}\left(W(h) \leq \frac{2J(c_1z)}{c_2}\right) \geq c_1\mathbb{P}\left(c(h) \leq 2J(c_1z)\right) \geq \frac{1}{z}\] by the definition of~\(J(.)\) in~(\ref{h_def})
and so \[H(z) \leq \frac{2J(c_1z)}{c_2} \text{ or equivalently } J(z) \geq \frac{c_2}{2}H\left(\frac{z}{c_1}\right),\] for all~\(0 < z< \frac{1}{\max(c_1,1)}.\) Plugging this into~(\ref{mst_low_ax}) (this is valid since~\(np \rightarrow \infty\) by theorem statement),  we get the lower deviation bound for~\(\tau_n\) in~(\ref{mn_comp_bounds}). The expectation lower bound for~\(\tau_n\) is a direct consequence of the deviation lower bound and the fact that~\(p  = o(1).\)

For the upper deviation bound for~\(\tau_n,\) we perform some additional computations. Say that an edge~\(h = (u,v)\) of~\(K_n\) is \emph{effective} if its cost factor~\(r(X_u,X_v) \leq K\) where~\(K \geq 1\) is a constant to be determined later and let~\(G_{eff} \subset G\) be the subset of the random graph~\(G\) obtained by retaining all effective edges. Also let~\(\Gamma_{eff} \subset K_n\) be the set of all effective edges present in the complete graph~\(K_n.\) We estimate the connectivity of~\(G_{eff} \subset \Gamma_{eff}\) as follows. Given~\(X_u = x\) and~\(\varepsilon  >0,\) the condition~\((I)\) in Theorem statement together with the Markov inequality implies that if~\(K \geq 1\) is large enough constant
\begin{equation}\label{k_choice}
\mathbb{P}\left(r(x,X_v) \geq K\right) \leq  \frac{B}{K} \leq \varepsilon,
\end{equation}
irrespective of the value of~\(x.\) Therefore if~\(N_{eff}(u)\) is the number of edges~\(K_n\) containing~\(u\) as an endvertex  that are \emph{not} effective, then given~\(X_u=x,\) we get that~\(N_{eff}(u)\) is stochastically dominated from above by a Binomial random variable with parameters~\(n-1\) and~\(\varepsilon.\) Therefore, the deviation estimate~(\ref{conc_est_f}) implies that \[\mathbb{P}\left(N_{eff}(u) \geq 2\varepsilon n \mid X_u=x\right) \leq \exp\left(-Cn\right)\]
for some constant~\(C > 0\) not depending on the choice of~\(x.\) Setting
\[E_{eff} := \bigcap_{u=1}^{n} \left\{N_{eff}(u) \leq 2\varepsilon n\right\}\] and invoking the union bound, we get that
\begin{equation}\label{e_eff_est}
\mathbb{P}\left(E_{eff}\right) \geq 1-ne^{-Cn}.
\end{equation}

Suppose~\(\Gamma\) is a realization of~\(\Gamma_{eff}\) that satisfies the occurrence and let~\(\mathbb{P}_{\Gamma}(.) = \mathbb{P}( . \mid \Gamma_{eff}=\Gamma)\) be the distribution conditioned on the occurrence of~\(\Gamma.\) We set
\[q(u,v)
= \left\{
\begin{array}{ll}
p(u,v), &  \text{ if } (u,v) \in \Gamma \\
&\\
0, & \text{ otherwise}.
\end{array}
\right.
\]
With the above notations, we see that each edge of~\(K_n\) is independently present in the random graph~\(G_{eff}\) with probability~\(q(u,v).\)

Say that an edge~\(h = (u,v)\) of~\(K_n\) is \emph{light} if its weight
\begin{equation}\label{light_def}
W(h) \leq 2H\left(\frac{\lambda \log{n}}{np}\right) = 2\varphi_n,
\end{equation}
where~\(\lambda > 0\) is a constant to be determined later. By the definition of inverse cdf~\(H(.),\) we see that \[\mathbb{P}\left( h \text{ is light}\right) \geq \frac{\lambda \log{n}}{np}.\] We already know that the edge~\(h\) is independently present in~\(G_{eff}\) with probability~\(q(u,v)\) and so if~\(G_{light} \subset G_{eff}\) is the subgraph obtained by retaining all light edges of~\(G,\) then~\(h\) is present in~\(G_{light}\) with probability
\begin{equation}\label{thmisa}
q_l(h) = q_l(u,v) := \frac{\lambda \log{n}}{n} \cdot \frac{q(u,v)}{p}.
\end{equation}

Let~\(0 < \gamma_0 < \frac{1}{2}\) be the constant in the  condition~(\ref{p_cond_new}) stated in Theorem~\ref{thm_max_cst_up} and let~\({\cal S}\) is any set of~\(s \geq (\gamma_0 + 3\varepsilon) n\) vertices. Since the original edge probabilities~\(\{p(u,v)\}\) satisfy~(\ref{p_cond_new}) and each vertex is adjacent to at most~\(2\varepsilon n\) edges that are not effective, we get from~(\ref{p_cond_new}) and~(\ref{thmisa}) that
\begin{equation}\label{tulma}
\sum_{u \in {\cal S}} q(u,v) \geq \gamma_0 nq,
\end{equation}
where~\( q := \frac{\lambda \log{n}}{n}.\) Consequently, letting~\(E_{con, light}\) denote the event that~\(G_{light}\) is connected, we get from the connectivity estimate~(\ref{e_con_est_max}) derived in Lemma~\ref{lemma_deg}\((b)\)  that
\begin{align}
\mathbb{P}_{\Gamma}\left(E_{con, light}\right) &\geq 1-\exp\left(-Cnq\right) \nonumber\\
&\geq 1-\exp\left(-C\lambda \log{n}\right) \label{lils_tits}
\end{align}
for some constant~\(C > 0.\)  Here and henceforth, constants do not depend on the choice of~\(\Gamma\) or~\(\lambda.\)

Similarly, arguing as above, the expected degree of each vertex in~\(G_{light}\) given~\(\Gamma\) is at least~\(2D_0 nq = 2D_0 \lambda \log{n}\) for some constant~\(D_0 > 0.\)  Letting~\(E_{deg,light}\) denote the event that each vertex is adjacent to at least~\(D_0 \lambda \log{n}\) other vertices in~\(G_{light},\) we get that
\begin{equation}
\mathbb{P}_{\Gamma}\left(E_{deg, light}\right) \geq 1-\exp\left(-C\lambda \log{n}\right).  \label{lils_tits_2}
\end{equation}
Defining
\[E_{light} := E_{con, light} \cap E_{deg, light},\] we get from~(\ref{lils_tits_2}),~(\ref{lils_tits}) and the union bound that
\[ \mathbb{P}_{\Gamma}\left(E_{light}\right) \geq 1-2\exp\left(-C\lambda \log{n}\right). \]
Given~\(\gamma > 0\) we now choose the constant~\(\lambda > 0\) large enough so that
\[\mathbb{P}_{\Gamma}\left(E_{light}\right) \geq 1-\frac{1}{n^{9+2\gamma}}\]
and then average over all~\(\Gamma\) satisfying the occurrence of~\(E_{eff}\) and use the estimate~(\ref{e_eff_est}) to get that
\begin{align}\label{g_light_con}
\mathbb{P}(E_{light}) &\geq \mathbb{P}\left(E_{light} \cap E_{eff}\right) \nonumber\\
&\geq \left(1-\frac{1}{n^{9+2\gamma}}\right) \cdot \left(1-n \cdot e^{-Cn}\right) \nonumber\\
&\geq 1-\frac{2}{n^{9+2\gamma}},
\end{align}
for all~\(n\) large.

%The connectivity estimate~(\ref{e_con_est_max}) in Lemma~\ref{lemma_deg} therefore implies that the random subgraph~\(G_{mk} \subset G\) obtained by retaining all effective edges of~\(G\) is connected with probability at least~\(1-\exp\left(-Dnp\right)\) for some constant~\(D > 0\) not depending on the choice of~\(\omega.\) In other words, defining~\(E_{mk}\) to be the event that~\(G_{mk}\) is connected, we have that
%\begin{equation}\label{e_mk_con_est_ax}
%\mathbb{P}_{\omega}(E_{mk}) \geq 1- e^{-Dnp}.
%\end{equation}

If~\(G_{light}\) is connected, then it contains a spanning tree~\({\cal T}_{light}\) each of whose edge has weight at most~\(H(q).\) Since~\(G_{light}\) contains only effective edges, the cost of each edge in~\(G_{light}\) is at most~\(K \cdot H(q),\) where~\(K \geq 1\) is the constant in~(\ref{k_choice}). Therefore the total cost of the~\(n-1\) edges in~\({\cal T}_{light}\) is at most~\[K \cdot (n-1) \cdot H(q) \leq K \cdot n \cdot H(q).\] The relation~(\ref{g_light_con}) then obtains the desired upper deviation bound for~\(\tau_n\) in Theorem statement.

To derive the expectation upper bound for~\(\tau_n,\) we also consider the case when~\(G_{light}\) is not connected. In this case, the MST cost~\(\tau_n\) is upper bounded by the total cost of all the edges of~\(K_n\) and so
\begin{equation}\label{tau_Two}
\tau_n\ind(E^c_{light}) \leq \sum_{h \in K_n} c(h) \ind(E^c_{light}).
\end{equation}
By the Cauchy-Schwartz inequality, we have that
\[\mathbb{E}c(h) \ind(E^c_{light}) \leq \left(\mathbb{E}c^2(h)\right)^{\frac{1}{2}} \cdot \left(\mathbb{P}(E^c_{light})\right)^{\frac{1}{2}}\]
and from the cost factor and edge weight moment condition~\((I)\) in Theorem statement, we know that
\[\mathbb{E}c^2(h) = \mathbb{E}r^2(X_1,X_2) \mathbb{E}W^2(f) \leq D.\]
for some constant~\(D > 0.\) Plugging this into~(\ref{tau_Two}) and using the fact that there are at most~\(n^2\) edges of~\(K_n,\) we get that
\begin{equation}\label{thmulp}
\mathbb{E}\tau_n \ind(E^c_{light}) \leq  D_1 n^2  \cdot \frac{1}{n^{3+\gamma}}  \leq \frac{D_1}{n^{1+\gamma}}
\end{equation}
for all~\(n\) large and some constant~\(D_1 > 0.\)

If~\(E_{light}\) does occur, i.e., if~\(G_{light}\) is connected, then as discussed prior to~(\ref{tau_Two}), we know that~\(\tau_n \leq KnH(q).\) Consequently, we get from~(\ref{thmulp}) that
\begin{equation}\label{skedoosh}
\mathbb{E}\tau_n \leq KnH(q) + \frac{D_1}{n^{1+\gamma}}.
\end{equation}
Since~\(\gamma > 0\) is arbitrary,  this obtains the expectation upper bound for~\(\tau_n\) in Theorem statement.

For the variance bound, we use Efron-Stein inequality and begin with some preliminary computations.  Analogous to~\(E_{light},\) define the event~\(E_{light}(\{1,2\})\) for the graph~\(G(\{1,2\})\) obtained by removing the vertices~\(1\) and~\(2\) from~\(G.\) Arguing as in the derivation of~(\ref{g_light_con}), we see that~(\ref{g_light_con}) is satisfied by~\(G(\{1,2\})\) as well.  Recalling the event~\(E_{deg} := E_{deg}(\emptyset)\) defined prior~(\ref{e_deg_ax_ax}), we see that occurrence of~\(E_{deg}\) ensures that each vertex in~\(G\) has degree at most~\(2B_0 np\) where~\(B_0 > 0\) is the constant in the condition~(\ref{p_cond_max_ax}).  Finally, recalling that~\(E_{con}({\cal B})\) denotes the event that the graph~\(G({\cal B})\) (obtained by removing the vertices of the deterministic set~\({\cal B}\) from~\(G\)) is connected, we define the joint event \[E_{comb} := E_{con}(\emptyset) \bigcap E_{con}(\{1,2\}) \bigcap  E_{deg} \bigcap E_{light} \bigcap E_{light}(\{1,2\})\] and get from the corresponding estimates~(\ref{e_con_est_new_ax}),~(\ref{e_deg_ax_ax}) and~(\ref{g_light_con}) that
\[\mathbb{P}\left(E_{comb}\right) \geq 1-e^{-Cnp} - e^{-Dnp} - \frac{2}{n^{9+2\gamma}}.\] Since~\(np \geq M \log{n}\) by Theorem statement, given~\(\gamma > 0,\) we choose~\(M > 0\) large enough so that
\begin{equation}\label{e_comb_est_ax}
\mathbb{P}(E_{comb}) \geq 1-\frac{1}{n^{8+2\gamma}}
\end{equation}
for all~\(n\) large.

We now use the event~\(E_{comb}\)  together with the martingale difference method based on the Efron-Stein inequality to obtain the desired variance bound for~\(\tau_n.\) As in the derivation of~(\ref{i_loc_wt_est_max}), we have that
\begin{align}
var(\tau_n) \leq  4nI_{loc} + 4mI_{wt}, \label{i_loc_wt_est}
\end{align}
where \[I_{loc} := \mathbb{E}\left(\tau_n-\tau_{rem}(1)\right)^2 \text{ and } I_{wt} := \mathbb{E}\left(\tau_n-\tau_{mod}(f_1)\right)^2\] respectively denote the scaled contributions due to randomness in vertex locations and edge states/weights, respectively. As in~(\ref{i_loc_wt_est_max}), we let that~\(G_{rem}(j) \subset G\) be the random graph obtained after removing vertex~\(j, 1 \leq j \leq n,\) from~\(G\) and denote~\({\cal T}_{rem}(1)\) to the minimum cost spanning tree of the largest component of~\(G,\) with corresponding cost~\(\tau_{rem}(1).\) Similarly~\(G_{rem}(f_k) \subset G\) is the random graph obtained after removing edge~\(f_k, 1 \leq k \leq m = {n \choose 2}\) from~\(G\) and~\({\cal T}_{rem}(f_k)\) is the minimum cost spanning tree of the largest component of~\(G_{rem}(f_k)\) with corresponding cost~\(\tau_{rem}(f_k).\)

In what follows, we estimate~\(I_{loc}\) and~\(I_{wt}\) in that order below.\\
\emph{\underline{Step 1} (Estimate for~\(I_{loc}\))}: Recalling the event~\(E_{comb}\) defined prior to~(\ref{e_comb_est_ax}), we split
\begin{equation}\label{i_loc_split}
I_{loc} = I_{loc,1} + I_{loc,2}
\end{equation}
where
\[I_{loc,1} := \mathbb{E}\left(\tau_n-\tau_{rem}(1)\right)^2\ind(E_{comb}) \text{ and } I_{loc,2} := \mathbb{E}\left(\tau_n-\tau_{rem}(1)\right)^2\ind(E^c_{comb})\] and estimate~\(I_{loc,2}\) and~\(I_{loc,1}\) in that order.

To bound~\(I_{loc,2},\) we  use the fact that~\(\tau_n\) is no more than the total cost of all edges of the complete graph~\(K_n;\) i.e.,~\[\tau_n \leq \sum_{h \in K_n} c(h).\] The same bound holds for~\(\tau_{rem}(1)\) as well and so
\[\left(\tau_n-\tau_{rem}(1)\right)^2 \leq \left(\sum_{h \in K_n}c(h)\right)^2.\] Applying Cauchy-Schwarz inequality, we obtain that
\begin{equation}\label{thmix_ax}
I_{loc,2} \leq \left(\mathbb{E}\left(\sum_{h \in K_n} c(h)\right)^{2}\right)^{\frac{1}{2}} \cdot \left(\mathbb{P}(E^c_{comb})\right)^{\frac{1}{2}}.
\end{equation}
Using~\((\sum_{i=1}^{l}a_i)^2 \leq l \sum_{i=1}^{l}a_i^2\) for positive~\(\{a_i\}\) and the fact that there are~\({n \choose 2} \leq n^2\) edges in~\(K_n,\) we get that
\begin{equation}\label{sum_ch}
\mathbb{E}\left(\sum_{ h \in K_n} c(h)\right)^2 \leq n^2 \sum_{h \in K_n} \mathbb{E}c^2(h)
\end{equation}
and for any edge~\(h = (u,v)\) with endvertices~\(u\) and~\(v,\) we have that
\[\mathbb{E}c^2(h) = \mathbb{E}r^2(X_u,X_v) W^2(u,v) = \mathbb{E}r^2(X_u,X_v) \mathbb{E}W^2(u,v).\] Since the cost factors and the edge weights have bounded second moments by conditions~\((I)-(II)\) in Theorem statement, we get that~\(\mathbb{E}c^2(h) \leq D^2\) for some constant~\(D > 0.\) Plugging this into~(\ref{sum_ch}) and again using the fact that there are at most~\(n^2\) edges in~\(K_n,\) we get that the final term in~(\ref{sum_ch}) is at most~\(D^2n^4.\) Substituting this into~(\ref{thmix_ax}) we get
\begin{equation}\label{thmix_ax_2}
I_{loc,2} \leq D n^2 \cdot \left(\mathbb{P}(E^c_{comb})\right)^{\frac{1}{2}}
\end{equation}
and plugging the estimate~(\ref{e_comb_est_ax}) for~\(E_{comb}\) into~(\ref{thmix_ax_2}), we get that
\begin{equation}\label{i_loc_two_est_ax}
I_{loc,2} \leq D_1 n^2 \cdot \frac{2}{n^{4+\gamma}}  = \frac{2D_1}{n^{2+\gamma}}
\end{equation}
for some constant~\(D_1 > 0.\) This obtains an estimate for~\(I_{loc,2}.\)

We now evaluate~\(I_{loc,1}\) and therefore assume henceforth that~\(E_{comb}\) occurs. We recall that~\(G_{rem}(1) = G(\{1\}) \subset G\) is obtained after removing the vertex~\(1\) from~\(G\) and because~\(E_{con}(\emptyset) \cap E_{con}(\{1\}) \supset E_{join}\)  occurs, both~\(G\) and~\(G_{rem}(1)\) are connected. We let~\({\cal T}_n\) and~\({\cal T}_{rem}(1)\) be the respective minimum cost spanning trees of~\(G\) and~\(G_{rem}(1).\) To estimate the cost difference~\(\tau_n-\tau_{rem}(1),\) we let~\({\cal N}_{light}(1)\) is the set of neighbours of the vertex~\(1\) in~\(G_{light}\) and let~\(j_0 \in {\cal N}_{light}(1)\) be any vertex. This is valid since the event~\(E_{deg, light} \supset E_{light} \supset E_{comb}\) ensures that each vertex is adjacent to at least order of~\(\log{n}\) other vertices in~\(G_{light}.\)

Adding the edge~\((1,j_0)\) to~\({\cal T}_{rem}(1)\) gives a spanning tree of~\(G\) and the edge~\((1,j_0)\) has weight at most~\(2\varphi_n\) by~(\ref{light_def}) and cost factor at most~\(K,\) by definition. Therefore
\begin{equation}
\tau_n \leq \tau_{rem}(1) + 2K\varphi_n. \label{zendaya}
\end{equation}
This obtains an upper bound for~\(\tau_n\) in terms of~\(\tau_{rem}(1).\)

For obtaining a lower bound for~\(\tau_n\) in terms of~\(\tau_{rem}(1),\) we let \[{\cal Q} := \{v_1,\ldots, v_{\Delta}\}\] be the set of neighbours of the vertex~\(1\) in the MST~\({\cal T}_n \subset G.\) Removing the vertex~\(1,\) we obtain~\(\Delta\) trees~\(\{{\cal S}_l\}_{1 \leq l \leq \Delta}\) of~\(G_{rem}(1),\) such that~\({\cal S}_l\) contains~\(v_l\) as the root. This is illustrated in Figure~\ref{fig_sub_trees_ax}\((a)\) for the case~\(\Delta=3,\) where the solid triangles represent the trees~\({\cal S}_i,i=1,2,3.\) Because the graph~\(G_{light}(\{1\})\) obtained by removing vertex~\(1\) from~\(G_{light}\) is connected (see definition of~\(E_{comb}\) prior to~(\ref{e_comb_est_ax})) there are edges~\(\{h_1,\ldots,h_{\Delta-1}\} \in G_{light}(\{1\})\) such that the union~\[\bigcup_{l=1}^{\Delta} \{{\cal S}_l\} \bigcup \bigcup_{l=1}^{\Delta-1} \{h_l\}\] forms a spanning tree of~\(G_{light}(1)\) (and hence~\(G_{rem}(\{1\})\)). This is illustrated in Figure~\ref{fig_sub_trees_ax}\((b),\) where the dotted lines represent the edges~\(\{h_l\}_{1 \leq l \leq \Delta-1}.\)

\begin{figure}[tbp]
\centering
%\fbox{
\includegraphics[width=6in, trim= 220 200 50 110, clip=true]{sub_tree4.eps}
%}
\caption{The subtrees~\({\cal S}_i, 1 \leq i \leq t=3\) containing the neighbours~\(v_1=A,v_2=B,v_3=C\) of the vertex~\(1\) are shown in~\((a).\) Removing vertex~\(1\) and adding the  edges~\(h_1=e\) and~\(h_2=f\) gives a spanning tree of the graph~\(G_{rem}(1),\) as shown in~\((b).\)}
\label{fig_sub_trees_ax}
\end{figure}

Since each~\(h_l\) is light and effective  and therefore has weight at most~\(2\varphi_n\) and cost factor at most~\(K,\) we get that
\begin{equation}
\tau_{rem}(1) \leq \tau_n + 2K\varphi_n (\Delta-1).  \label{tau_rem_up}
\end{equation}
Combining~(\ref{zendaya}) and~(\ref{tau_rem_up}), we obtain
\[|\tau_n-\tau_{rem}(1)|\ind(E_{comb}) \leq \ind(E_{comb})2K\varphi_n\Delta\]  and plugging this into the expression for~\(I_{loc,1}\) in~(\ref{i_loc_split}), we obtain
\[I_{loc,1} \leq 4K^2\varphi_n^2 \mathbb{E}\Delta^2\ind(E_{comb}),\]
where the event~\(E_{comb}\) is as defined prior to~(\ref{e_comb_est_ax}).

Using the fact that the event~\(E_{deg} \supset E_{comb}\) occurs, we have that the degree of every vertex in~\(G\) is at most~\(Dnp\) for some constant~\(D > 0\) and so
\[I_{loc,1} \leq D_1 \varphi_n^2 np \mathbb{E}\Delta \ind(E_{comb}) \leq D_1 \varphi_n^2 np \mathbb{E}\Delta \ind(E_{con}) = D_1 \varphi_n^2 np \mathbb{E}\Delta(1) \ind(E_{con}),\] for some constant~\(D_1 > 0,\) using the notation~\(\Delta(j)\) for the degree of the vertex~\(j\) in the MST~\({\cal T}_n \subset G.\) Invoking symmetry, we finally get
\begin{equation} \label{bharg_ax_2}
I_{loc,1} \leq D_1\varphi_n^2 np \mathbb{E}\Delta(1)\ind(E_{con})= D_1np \varphi_n^2\cdot \frac{1}{n}\sum_{j=1}^{n} \mathbb{E}\Delta(j)\ind(E_{con}),
\end{equation}
for some constant~\(D_0 > 0,\) since the event~\(E_{con} \) that~\(G\) is connected does not depend on the index~\(j\) in the summation.

If~\(G\) is connected, then the tree~\({\cal T}_n\) has~\(n-1\) edges and using the relation that the sum of vertex degrees is twice the number of edges in any graph, we get that~\[\sum_{j=1}^{n}\Delta(j) = 2(n-1) \leq 2n.\] Substituting this into~(\ref{bharg_ax_2}) we obtain~\(I_{loc,1} \leq 2D_4np\) and combining with the estimate~(\ref{i_loc_two_est_ax}) for~\(I_{loc,2},\) we see that the contribution~\(I_{loc}\) defined in~(\ref{i_loc_wt_est}) satisfies
\begin{equation}\label{i_loc_est_axxx}
nI_{loc} \leq 2D_1n^2p \varphi_n^2 + \frac{D_1}{n^{1+\gamma}}.
\end{equation}
for all~\(n\) large. This completes the first step of the derivation of the variance bound.

\emph{\underline{Step 2} (Estimate for~\(I_{wt}\))}: Recalling  that our goal is to estimate~\(I_{wt} = \mathbb{E}\left(\tau_n  -\tau_{rem}(f_1)\right)^2,\) we use the event~\(E_{comb}\) defined prior to~(\ref{e_comb_est_ax}) and split
\begin{equation}\label{i_wt_split}
I_{wt} := I_{wt,1} + I_{wt,2},
\end{equation}
where
\[I_{wt,1} := \mathbb{E}\left(\tau_n-\tau_{rem}(f_1)\right)^2\ind(E_{comb}) \text{ and } I_{wt,2} := \mathbb{E}\left(\tau_n-\tau_{rem}(f_1)\right)^2\ind(E^c_{comb})\] and estimate~\(I_{wt,2}\) and~\(I_{wt,1}\) in that order below.

If~\(E_{comb}^c\) occurs, then we argue as in the derivation of~(\ref{i_loc_two_est_ax}) to get that
\begin{equation}\label{i_wt_two_est_ax}
I_{wt,2} \leq \frac{1}{n^{3+\gamma}}
\end{equation}
for all~\(n\) large. To estimate~\(I_{wt,1},\) we assume henceforth that~\(E_{comb}\) occurs so that~\(G\) is connected and establish the connectivity of the graph~\(G_{mod}(f_1)\) as follows. We assume that the edge~\(f_1 = (1,2)\) has~\(1\) and~\(2\) as endvertices. The occurrence of the event~\(E_{comb}\) ensures that the graph~\(G(\{1,2\})\) obtained after removing both~\(1\) and~\(2\) from~\(G\) is connected. Moreover the event~\(E_{light} \supset E_{join}\) also implies that both~\(1\) and~\(2\) have at least order of~\(\log{n}\) neighbours in~\(G_{light}\) and so both~\(1\) and~\(2\) each have at least one neighbour in the graph~\(G_{light}(f_1)\) obtained by only removing the edge~\(f_1\) from~\(G_{light}.\) Thus~\(G_{light}(f_1)\) and hence~\(G_{rem}(f_1)\) are both connected and let~\(b_i, i=1,2\) be a  neighbour of~\(i\) in~\(G_{light}(f_1).\)

The discussion above implies that both~\(G\) and~\(G_{rem}(f_1)\) are connected with respective MSTs~\({\cal T}_n\) and~\({\cal T}_{rem}(f_1),\) having corresponding costs~\(\tau_n\) and~\(\tau_{rem}(f_1)\geq \tau_n.\) Therefore removing~\(f_1\) from the MST~\({\cal T}_n\) and adding the effective and light edges~\((1,b_1)\) and~\((2,b_2),\) we get a new spanning tree~\({\cal T}_{rep}\) of~\(G_{rem}(f_1).\) Thus~\(\tau_{rem}(1) \leq \tau_n + 2 \cdot 2\varphi_n \cdot K.\) Summarizing the above, we get \[\tau_n \leq \tau_{rem}(f_1) \leq \tau_n + 4K\varphi_n.\]

Clearly,~\(\tau_n = \tau_{rem}(f_1)\) if~\(f_1\) does not belong to MST~\({\cal T}_n\)  and so the discussion in the previous paragraph implies that
\[|\tau_n-\tau_{rem}(f_1)|\ind(E_{comb}) \leq 4K\varphi_n\ind(f_1 \in {\cal T}_n). \]
Squaring and taking expectations, we get
\begin{equation}
I_{wt,1} = \mathbb{E}\left(\tau_n-\tau_{rem}(f_1)\right)^2\ind(E_{comb}) \leq 16K^2\varphi_n^2\mathbb{P}\left(f_1 \in {\cal T}_n\right). \label{topless_tits_two}
\end{equation}
For any edge~\(f_k \in K_n, 1 \leq k \leq m = {n \choose 2},\) we have by symmetry that \[\mathbb{P}(f_k \in {\cal T}_n) = \mathbb{P}(f_1\in {\cal T}_n)\] and so
\[\mathbb{P}\left(f_1 \in {\cal T}_n\right) = \frac{1}{m} \sum_{k=1}^{m} \mathbb{P}\left(f_k \in {\cal T}_n\right) \leq \frac{n-1}{m} = \frac{n}{m}, \] since any tree of~\(K_n\) has at most~\(n-1\) edges. Substituting this into~(\ref{topless_tits_two}), we get that~\(I_{wt,1} \leq \frac{Dn \varphi_n^2 }{m}\) for some constant~\(D > 0\) and combining  with the estimate~(\ref{i_wt_two_est_ax}) for~\(I_{wt,2},\) we obtain
\begin{align}
mI_{wt} &= mI_{wt,1} + mI_{wt,2} \nonumber\\
&\leq D_1 n \cdot \varphi_n^2 +  D_1m \cdot \frac{1}{n^{3+\gamma}} \nonumber\\
&\leq D_1n\varphi_n^2 + \frac{D_1}{n^{1+\gamma}}, \label{hilla}
\end{align}
for some constant~\(D_1 > 0,\) since the number of edges~\(m\) in~\(K_n\) is~\({n \choose 2} \leq n^2.\) Plugging this and~(\ref{i_loc_est_axxx}) into the estimate~(\ref{i_loc_wt_est}), we obtain the desired variance bound in Theorem statement. This completes the proof of the Theorem.~\(\qed\)

\renewcommand{\theequation}{\thesection.\arabic{equation}}
\setcounter{equation}{0}
\section{Proof of Theorem~\ref{thm_min_cst_strong}}\label{sec_pf_min_strong}
We begin with a preliminary lemma regarding the expected value of the minimum of i.i.d. edge weights. Formally, let~\(\{Y_j\}_{j \geq 1}\) be i.i.d.\ with cdf~\(F\) and for~\(l \geq 1,\) let \[Z_l := \min_{1 \leq j \leq l} Y_j.\] Recalling the edge weight inverse cdf~\(H\) defined in the paragraph containing~(\ref{h_def}), we let
\[t_l := H\left(\frac{1}{l}\right) \leq s_l := H\left(\frac{\lambda \log{n}}{l}\right)\] for constant~\(\lambda > 0\) and set~\[\mathbb{P}_l := \mathbb{P}\left(. \mid Z_l \leq 2s_l\right)\] to be the distribution conditioned on the event that~\(Z_l\) is at most~\(2s_l.\)
\begin{lemma}\label{lemma_z} If the condition~\((B)\) in the statement of Theorem~\ref{thm_min_cst_strong} holds, then there is a constant~\(\gamma = \gamma(\lambda) > 0\)  such that for all~\(l \geq \gamma \log{n},\) we have
\begin{equation}\label{exp_z_bounds}
\gamma^{-1} t_l \leq \mathbb{E}_lZ_l \leq \gamma t_l.
\end{equation}
\end{lemma}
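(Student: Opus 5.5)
We would work directly from the tail formula for the minimum. Since $Z_l$ is the minimum of $l$ i.i.d.\ copies with cdf $F$, we have $\mathbb{P}(Z_l > x) = (1-F(x))^l$. The conditioning event $\{Z_l \leq 2s_l\}$ has probability at least $1-(1-F(2s_l))^l \geq 1-(1-\frac{\lambda \log n}{l})^l \geq 1-n^{-\lambda}$, because $F(2s_l)\geq \frac{\lambda\log n}{l}$ by the definition of the inverse cdf. The conditioning therefore changes probabilities by at most a factor of $2$. This gives
\[\mathbb{E}_l Z_l = \frac{1}{\mathbb{P}(Z_l \leq 2s_l)}\int_0^{2s_l}\left[(1-F(x))^l - (1-F(2s_l))^l\right]dx ,\]
and it remains to bound this integral above and below by constant multiples of $t_l = H(1/l)$. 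We take $l \geq \gamma \log n$ with $\gamma$ large, so that $s_l \leq H(\lambda/\gamma)$ is small. This places $t_l, s_l$ in the range $(0, x_0/k)$ where condition~(\ref{scale_two_cond}) applies, using that $F$ is strictly increasing near $x_0/2$ and the usual inverse property $F(H(z)) \approx z$ (as in Lemma~\ref{lemma_hz}).

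\emph{Lower bound.} For $x \leq t_l/2$ we have $F(x) \leq F(t_l) \leq 1/l$, up to the inverse-cdf convention. Hence $(1-F(x))^l \geq (1-1/l)^l \geq 1/4$. Meanwhile $(1-F(2s_l))^l \leq n^{-\lambda} \leq 1/8$. Integrating over $[0, t_l/2]$ then gives $\mathbb{E}_l Z_l \geq t_l/16$.

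\emph{Upper bound.} This is the main step, and the place where~(\ref{scale_two_cond}) is needed. We bound the integrand by $(1-F(x))^l \leq \exp(-lF(x))$ and split $[0,2s_l]$ into $[0,2t_l]$, which contributes at most $2t_l$, and the blocks $[kt_l,(k+1)t_l]$ for $2 \leq k \leq 2s_l/t_l$. On the $k$-th block, (\ref{scale_two_cond}) with $x = t_l$ gives $F(kt_l) \geq D(\log k)^{1+\theta}F(t_l) \geq D(\log k)^{1+\theta}/l$, valid while $kt_l < x_0$. Since $s_l$ is small this holds throughout, after a harmless adjustment if $2s_l/t_l$ is not an integer. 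The integrand on that block is therefore at most $\exp(-D(\log k)^{1+\theta})$, and
\[\int_{2t_l}^{2s_l} e^{-lF(x)}dx \leq t_l\sum_{k\geq 2} \exp\left(-D(\log k)^{1+\theta}\right).\]
This sum is finite precisely because $\theta>0$: the terms equal $k^{-D(\log k)^{\theta}}$, which decay faster than any power of $k$. Combining the two ranges gives $\mathbb{E}_l Z_l \leq 2(2+C_\theta)t_l$.

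The delicate points are twofold. First, we must check that $F(t_l) \geq 1/l$; this requires continuity of $F$ at $t_l$, which is where strict monotonicity of $F$ near the origin enters. If needed, we would replace $t_l$ by a nearby point to handle atoms. Second, we must choose $\gamma$ large enough, depending on $\lambda$, $x_0$ and $D$, that every scaling point $kt_l \leq 2s_l$ stays below $x_0$. If $F$ has an atom at $0$ the statement degenerates; we assume this is excluded by~(\ref{scale_two_cond}), which forces $F(x) \to 0$ as $x \to 0$.
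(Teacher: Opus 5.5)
Your proposal is correct and follows essentially the same route as the paper. Both use the tail-integral formula for the conditioned minimum, a lower bound from $(1-F(x))^l\ge(1-1/l)^l$ on $[0,t_l/2]$, and an upper bound via $(1-F)^l\le e^{-lF}$ with blocks of length of order $t_l$, condition~(\ref{scale_two_cond}) applied at base point $t_l$ (the paper uses $2t_l$), and $2s_l$ kept below $x_0$ through strict monotonicity of $F$ near $x_0/2$; your conditional-expectation formula, which subtracts $(1-F(2s_l))^l$, is actually more accurate than the paper's. The one ``delicate point'' you flag is not an issue: $F(t_l)\ge 1/l$ follows from right-continuity of $F$ alone, since $F(x)>1/l$ for every $x>t_l$, so no continuity at $t_l$ and no monotonicity near the origin is needed.
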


\emph{Proof of Lemma~\ref{lemma_z}}:  Clearly, for~\(x > 0\) we have that
\[\mathbb{P}(Z_l  > x) = \mathbb{P}\left(\min_{1 \leq j \leq l}Y_j >x\right) = (1-F(x))^{l}\] and so \[\mathbb{P}(Z_l > 2s_l) = (1-F(2s_l))^{l}. \] By the definition of the inverse cdf~\(H(.),\) we have that~\(F(2s_l) \geq \frac{\lambda \log{n}}{l}\) and so
\begin{equation}\label{z_up}
\mathbb{P}(Z_l > 2s_l) \leq \left(1-\frac{\lambda \log{n}}{l}\right)^{l} \leq \exp\left(-\lambda \log{n}\right) = o(1).
\end{equation}
Thus
\begin{align}
\mathbb{E}_lZ_l &= \frac{1}{\mathbb{P}(Z_l \leq 2s_l)} \int_{0}^{2s_l} (1-F(x))^{l} dx  \nonumber\\
&= (1+o(1)) \int_{0}^{2s_l}(1-F(x))^{l} dx. \label{prahlad}
\end{align}

Again using the definition of the inverse cdf, we have that
\[\mathbb{P}\left(Y_1 \leq \frac{t_l}{2}\right) \leq \frac{1}{l}\] and so from~(\ref{prahlad}), we obtain
\begin{align}
\mathbb{E}Z_l &\geq \frac{1}{2}\int_{0}^{t_l/2} (1-F(x))^{l} dx \nonumber\\
&\geq \frac{1}{2}\int_{0}^{t_l/2} \left(1-\frac{1}{l}\right)^{l} dx \nonumber\\
&\geq \frac{1}{2e} \int_0^{t_l/2} dx \nonumber\\
&= \frac{t_l}{4e}. \nonumber
\end{align}
This obtains the lower bound in~(\ref{exp_z_bounds}).

For the expectation upper bound, we use the scaling relation~(\ref{scale_two_cond}). Indeed, since~\(0 < z_0 := F\left(\frac{x_0}{2}\right) < 1,\) we have for~\(l \geq \gamma \log{n}\) that~\[\frac{\lambda \log{n}}{l} \leq  \frac{\lambda}{\gamma} \leq z_0,\] provided~\(\gamma > 0\) is large enough. Fixing such a~\(\gamma,\) we use the monotonicity of the inverse cdf~\(H(z)\) to obtain that
\begin{equation}\label{s_l_ax_est}
s_l = H\left(\frac{\lambda \log{n}}{l} \right) \leq H(z_0).
\end{equation}

We now invoke the strict monotonicity of the cdf~\(F\) near~\(\frac{x_0}{2},\) to argue that
\begin{equation}\label{h_z_ntttt}
H(z_0) = \frac{x_0}{2}.
\end{equation}
Indeed, by the definition of inverse cdf in~(\ref{h_def}), we know that~\(F(H(z_0)+\epsilon) > z_0\) for all~\(\epsilon > 0\) and so allowing~\(\epsilon \downarrow 0\) and using the right continuity of~\(F,\) we get that~\[F(H(z_0)) \geq z_0 = F\left(\frac{x_0}{2}\right).\] Consequently~\(H(z_0) \geq \frac{x_0}{2}.\) If~\(H(z_0)\) were strictly larger than~\(\frac{x_0}{2},\) then there exists~\(\eta > 0\) such that~\(H(z_0)(1-\eta) > \frac{x_0}{2}\) and since~\(F\) is strictly increasing in a neighbourhood of~\(\frac{x_0}{2},\) we can choose~\(\eta > 0\) smaller if necessary to get that~\[F(H(z_0)(1-\eta)) > F\left(\frac{x_0}{2}\right) = z_0,\] strictly.  This contradicts the definition of the inverse cdf in~(\ref{h_def}) and so~(\ref{h_z_ntttt}) is true.

Combining~(\ref{s_l_ax_est}) and~(\ref{h_z_ntttt}), we get that~\(2s_l \leq x_0\) and so the scaling relation~(\ref{scale_two_cond}) is applicable for all integers~\(k\) satisfying~\(kt_l \leq s_l.\) Splitting the summation in~(\ref{prahlad}), we now get
\begin{align}
\mathbb{E}_lZ_l &= (1+o(1)) \int_0^{2s_l} (1-F(x))^{l} dx \nonumber\\
&\leq 2 \int_0^{s_l} (1-F(x))^{l} dx \nonumber\\
&\leq 2 \sum_{k \leq s_l/t_l} \int_{2kt_l}^{2(k+1)t_l} (1-F(x))^{l} dx \nonumber\\
&\leq 2 \sum_{k} \int_{2kt_l}^{2(k+1)t_l} \exp\left(-lF(x)\right) dx \nonumber\\
&\leq 2 \sum_{k} \exp\left(-lF(2kt_l)\right) 2t_l \nonumber\\
&\leq  4t_l \sum_{k} \exp\left(-l D(\log{k})^{1+\theta} F(2t_l)\right) \nonumber\\
&\leq 4t_l \sum_{k} \exp\left(-D(\log{k})^{1+\theta}\right) \label{hiranya}
\end{align}
where~\(D > 0\) is the constant in~(\ref{scale_two_cond}) and the final estimate in~(\ref{hiranya}) is true since~\[F(2t_l) = \mathbb{P}\left(Y_1 \leq 2H\left(\frac{1}{l}\right)\right)  \geq \frac{1}{l},\] by the definition of  inverse cdf~\(H(.).\) The summation in the final term of~(\ref{hiranya}) being finite, we then obtain the expectation upper bound in~(\ref{exp_z_bounds}) as well. This completes the proof of the Lemma.~\(\qed\)

\emph{Proof of Theorem~\ref{thm_min_cst_strong}}: We use  an iterative path construction involving small weight edges of~\(G\) to establish the upper bound for~\(\tau_n.\) Let~\({\cal N}(v)\) be the set of neighbours of the vertex~\(v\) in the random graph~\(G.\) Let~\(v_1= 1\) and for~\(j \geq 1\) let~\(v_{j+1} \in {\cal N}(v_j)\) be the vertex such that
\begin{equation}\label{weight_ax_est}
R_j := W(v_j, v_{j+1}) = \min_{v \in {\cal N}(v_j)\setminus \{v_1,\ldots,v_{j-1}\}} W(v_j,v)
\end{equation}
with the notation that~\(\{v_1,\ldots,v_{j-1}\} = \emptyset\) for~\(j=1\) and the minimum of an empty set is~\(\infty.\) In words~\(v_{j+1}\) forms the edge with least weight amongst all neighbours of~\(v_j\) not encountered so far. If~\(L\) is the smallest integer~\(j\) such that~\(R_j = \infty,\) then we define the path~\({\cal P} := (v_1,\ldots,v_L)\) and for completeness, set~\(v_j := v_L\) and~\(R_j := \infty\) for~\(L \leq j \leq n-1.\)

%GIVE ONLY EXISTENCE OF PATH FOR N-16J0 EDGES!!!!

Setting~\(J_0 := \frac{32\theta \log{n}}{p}\) for some constant~\(\theta > 0\) to be determined later,  we show below that~\({\cal P}\) has at least~\(n-J_0\) edges with high probability and estimate the weight of path~\({\cal P}(J_0)\) formed by the first~\(n-J_0\) edges of~\({\cal P}.\) We then extend~\({\cal P}(J_0)\) to a spanning tree of~\(G\) using edges with predetermined weights and thereby upper bound the MST weight~\(\tau_n.\)

If~\(N_j(v_j)\) is the number of neighbours of the~\(j^{th}\) vertex~\(v_j\) in~\({\cal N}(v_j) \setminus\{v_1,\ldots,v_{j-1}\},\) then~\(N_j(v_j)\) depends only on the state of edges in~\(G\) having both endvertices outside~\(\{v_1,\ldots,v_{j-1}\}.\) Moreover, given~\(\{v_1,\ldots,v_j\},\) we see that~\(N_j(v_j)\) is stochastically dominated from below by a Binomial random variable with parameters~\(n-j\) and~\(p.\) In anticipation, we define the events
\begin{equation}\label{fj_def}
E_j(v_j) := \left\{N_j(v_j) \geq \frac{(n-j)p}{2}\right\}\;\;\text{ and }\;\; F_j := \bigcap_{l=1}^{j}E_j(v_j)
\end{equation}
with  the notation that~\(F_0^c = \emptyset.\) If~\({\cal H}_{j-1}\) be the sigma-field generated by the states and weights of all edges containing at least one endvertex in~\(\{v_1,\ldots,v_{j-1}\},\) then~\(F_{j-1} \in {\cal H}_{j-1}\) and so we get from the deviation estimate~(\ref{conc_est_f}) that
\begin{equation}\label{ej_vj_est}
\mathbb{P}\left(E^c_j(v_j) \mid {\cal H}_{j-1}\right) \ind(F_{j-1}) \leq \exp\left(-\frac{(n-j)p}{16}\right) \leq \exp\left(-\frac{np}{32}\right),
\end{equation}
for all~\(1 \leq j \leq n-J_0 = n-\frac{32\theta \log{n}}{p},\) provided~\(np \geq 64\theta \log{n}.\)

%Given~\(\gamma > 0,\) we now choose~\(np \geq M\log{n}\) for some constant~\(M >16(2+\gamma),\) strictly so that~\(n \geq 16J_0,\) where~\(J_0 = \frac{(2+\gamma)\log{n}}{p}\) is as in Theorem statement.

Taking averages in~(\ref{ej_vj_est}) we get that
\[\mathbb{P}\left(F^c_j \cap F_{j-1}\right) = \mathbb{P}\left(E^c_j(v_j) \cap F_{j-1}\right) \leq  \exp\left(-\frac{np}{32}\right)\]
for all~\(1 \leq j \leq n - J_0\) and so defining \[E_{path} := F_{J_0} = \bigcap_{1 \leq j \leq n-J_0} E_j(v_j),\]
we therefore get  that
\begin{equation}\label{e_path_est}
\mathbb{P}(E^c_{path}) = \sum_{j=1}^{n-J_0}\mathbb{P}(F^c_j \cap F_{j-1}) \leq n \exp\left(-\frac{np}{32}\right).
\end{equation}
Given~\(\gamma > 0,\) we now choose~\(np \geq M\log{n}\) for some large constant~\(M > 64\theta\)  so that the final expression in~(\ref{e_path_est}) is at most~\(\frac{1}{n^{1+\gamma}}.\) Finally, recalling the event~\(E_{comb}\) defined prior to~(\ref{e_comb_est_ax}), we set
\[E_{join} := E_{comb} \cap E_{path}\] and get from the respective estimates~(\ref{e_comb_est_ax}) and~(\ref{e_path_est}) that
\begin{equation}\label{e_comb_est_min}
\mathbb{P}(E_{join}) \geq 1-\frac{2}{n^{1+\gamma}}.
\end{equation}

If~\(E_{join}\) occurs, then  the path~\({\cal P}\) in the above iterative construction contains at least~\(n- J_0\) edges and from~(\ref{weight_ax_est}), we know that the weight of the~\(j^{th}\) edge of~\({\cal P}\) is~\(R_j = W(v_j,v_{j+1}).\) Therefore, the total weight of the path~\({\cal P}(J_0)\) formed by the first~\(n-J_0\) edges in~\({\cal P},\) is~\(\sum_{j=1}^{n-J_0} R_j.\)  Since the subgraph~\(G_{light}\) consisting of light and effective edges is connected (see discussion prior to the definition of~\(E_{comb}\) in~(\ref{e_comb_est_ax})), we also see that~\({\cal P}(J_0)\) can be extended to a spanning tree~\({\cal T}_{ext}\) of~\(G\) by adding~\(J_0-1\) edges of~\(G_{light}.\) Each edge of~\(G_{light}\) has weight at most~\(2\varphi_n\) (see~(\ref{light_def})) and the cost of an edge~\(h\) with weight~\(W(h)\) is at most~\(B \cdot W(h)\) for constant~\(B > 0,\) by Theorem statement. Consequently, the minimum cost~\(\tau_n,\) which is no more than the total cost of all edges in~\({\cal T}_{ext},\) satisfies
\begin{equation}\label{aane_wala}
\tau_n \leq B\sum_{j= 1}^{n-J_0}R_j + 2BJ_0\varphi_n.
\end{equation}

It remains to estimate the sum~\(\sum_{j}R_j,\) which we do by iteration as follows. Let~\(\{Y_i\}_{1 \leq  i \leq n-1}\) be i.i.d.\ random variables each having the same distribution as the edge weights and let~\(\{Z_i\}_{1 \leq i \leq n-1}\) be independent random variables where~\(Z_i\) has the same distribution as~\(\min_{1 \leq j \leq i} Y_j.\) By definition, the weight~\(R_j = W(v_j,v_{j+1})\) of the~\(j^{th}\) added edge to~\({\cal P}\) depends only on the state and weight of edges having at least one endvertex in~\(\{v_1,\ldots,v_{j}\}.\) Moreover, if the event~\(E_j(v_j)\) as defined prior to~(\ref{ej_vj_est}) occurs, then~\(v_j\) has at least
\begin{equation}\label{m_j_def}
m(j) := \frac{(n-j)p}{2}
\end{equation} neighbours in~\({\cal N}(v_j) \setminus \{v_1,\ldots,v_{j-1}\}.\) Consequently, given~\(\{v_1,\ldots,v_j\}\) and that the event~\( F_j = \bigcap_{l=1}^{j} E_l(v_l)\) defined in~(\ref{fj_def}) occurs, we see that~\(R_j\) is stochastically dominated from below by the random variable~\(Z_{m(j)}.\)

Denoting~\({\cal G}_j\)  to be the sigma-field generated by the states of all edges containing at least one endvertex in~\(\{v_1,\ldots, v_j\}\) and the weights of all edges containing at least one endvertex in~\(\{v_1,\ldots, v_{j-1}\},\)  we get from the above discussion that
\begin{equation}\label{rjx_est}
\mathbb{P}\left(R_j \geq x \mid {\cal G}_j\right) \ind(F_j) \leq \mathbb{P}\left(Z_{t(j)} \geq x\right) \ind(F_j),
\end{equation}
for any~\(x \in \mathbb{R}\) and for each~\(1 \leq j \leq n-J_0.\) Setting~\(T_j := \sum_{l=1}^{j}R_l,\) we get for any~\(x,y \in \mathbb{R}\) that
\begin{align}
\mathbb{P}\left(T_j \geq x \mid {\cal G}_j\right) \ind(F_j)\ind(T_{j-1}=y)
&\leq \mathbb{P}\left(R_j \geq x-y \mid {\cal G}_j\right) \ind(F_j) \ind(T_{j-1} = y) \nonumber\\
&\leq \mathbb{P}\left(Z_{m(j)} \geq x-y\right) \ind(F_j)\ind(T_{j-1} = y), \nonumber
\end{align}
where~\(m(j) = \frac{(n-j)p}{2}\) is as defined in~(\ref{m_j_def}). Taking expectations and summing over~\(y,\) we then obtain
\[\mathbb{P}\left(\{T_j \geq x\} \bigcap F_j\right) \leq \mathbb{P}\left(\left\{T_{j-1} + Z_{m(j)} \geq x\right\} \bigcap F_j\right)\]
leading to the recursion
\begin{equation}\label{tham_tam}
\mathbb{P}\left(T_j \geq x\right) \leq \mathbb{P}\left(T_{j-1} + Z_{m(j)} \geq x\right) + \mathbb{P}(F_j^c).
\end{equation}

Again arguing as above with~\(j-1\) replaced by~\(j,\) we have that
\[\mathbb{P}\left(T_{j-1} + Z_{m(j)} \geq x\right) \leq \mathbb{P}\left(T_{j-2} + Z_{m(j-1)} + Z_{m(j)} \geq x\right) + \mathbb{P}(F_{j-1}^c)\]
and proceeding iteratively with~(\ref{tham_tam}), we get that
\begin{equation}\label{kurma}
\mathbb{P}\left(T_j \geq x\right) \leq \mathbb{P}\left(Z_{tot}(j) \geq x\right) + \sum_{l=1}^{j}\mathbb{P}(F_l^c)
\end{equation}
for each~\(1 \leq j \leq n-J_0,\)
where \[Z_{tot}(j) := Z_{m(1)} + \ldots + Z_{m(j)}.\]

From the estimate~(\ref{ej_vj_est}), we see that
\[\mathbb{P}\left(E_{j}(v_j) \mid F_{j-1}\right) \leq \exp\left(-\frac{np}{32}\right)\] for each~\(1 \leq j \leq n-J_0 = n-1/p\) and so proceeding iteratively, we see that the event~\(F_j = \bigcap_{l=1}^{j}E_l(v_l)\) does \emph{not} occur with probability
\[\mathbb{P}(F_j^c) \leq j \cdot \exp\left(-\frac{np}{32}\right) \leq n \cdot \exp\left(-\frac{np}{32}\right).\] Plugging this into~(\ref{kurma}) and again using~\(j \leq n,\) we get that
\[\mathbb{P}\left(T_j \geq x\right) \leq \mathbb{P}\left(Z_{tot}(j) \geq x\right) +  n^2 \cdot \exp\left(-\frac{np}{32}\right)\]
for each~\(1 \leq j \leq n-J_0.\) Recalling that~\(T_j = \sum_{l=1}^{j} R_j\) we then obtain
\begin{equation}\label{tj_est_one_ax}
\mathbb{P}\left(\sum_{j=1}^{n-J_0} R_j \geq x\right) \leq \mathbb{P}\left(Z_{tot}(n-J_0) \geq x\right) +  n^2 \cdot \exp\left(-\frac{np}{32}\right)
\end{equation}

In the final ingredient of this proof, we use Azuma-Hoeffding inequality to estimate~\(Z_{tot} := Z_{tot}(n-J_0)\)  and begin with some preliminary definitions. Letting~\(\lambda > 0\) be a constant to be determined later and recalling the term~\(s_l = H\left(\frac{\lambda \log{n}}{l}\right)\) in Lemma~\ref{lemma_z}, we see that~\(s_{m(j)} = H\left(\frac{32\lambda \log{n}}{(n-j)p}\right).\) Further recalling that~\(J_0 = \frac{32\theta \log{n}}{p}\) we choose~\(\theta > \lambda\) and get that
\begin{equation}\label{nu_wt_expr}
\sum_{j=1}^{n-J_0} s^2_{m(j)} = \sum_{j=J_0}^{n-1} H^2\left(\frac{32\lambda \log{n}}{jp}\right) =: \mu_{wt},
\end{equation}
as in the Theorem statement.

Defining the event \[A_j := \left\{Z_{m(j)} \leq 2s_{m(j)}\right\}\] for~\(1 \leq j \leq n-J_0,\) we get from the corresponding estimate~(\ref{z_up}) that \[\mathbb{P}(A_j) = 1-\exp\left(-\lambda \log{n}\right) = 1-\frac{1}{n^{\lambda}}.\] Given~\(\gamma > 0,\) we choose~\(\lambda = \gamma+2\) and define the event \[A := \bigcap_{j=1}^{n-J_1} A_j,\] to get from the union bound that
\begin{equation}\label{a_est}
\mathbb{P}(A) \geq 1-\frac{1}{n^{1+\gamma}}.
\end{equation}
Letting~\(\mathbb{P}_A := \mathbb{P}(.\mid A)\) denote the distribution conditioned on the occurrence of~\(A,\) we have from the Azuma-Hoeffing inequality (Lemma~\ref{lemmax}\((b)\)) that
\[\mathbb{P}_A\left(|Z_{tot} - \mathbb{E}_A Z_{tot}| \geq  t\right) \leq 2 \exp\left(-\frac{t^2}{4\mu_{wt}}\right)\]
for~\(t \geq 0.\) Setting~\(t=  \mathbb{E}_AZ_{tot},\) we further obtain
\begin{equation}\label{azumar}
\mathbb{P}_A\left(Z_{tot} \geq 2\mathbb{E}_A Z_{tot}\right) \leq 2 \exp\left(-\frac{\left(\mathbb{E}_AZ_{tot}\right)^2}{4\mu_{wt}}\right).
\end{equation}

To estimate~\(\mathbb{E}_AZ_{tot},\) we recall the term~\(t_l = H\left(\frac{1}{l}\right)\) in Lemma~\ref{lemma_z} and get from the expectation bounds~(\ref{exp_z_bounds}) that there is a constant~\(c  > 0\) such that if~\(m(j) = \frac{(n-j)p}{32} \geq c \log{n}\) or equivalently if~\(j \leq n-\frac{32c \log{n}}{p},\) then
\begin{equation}\label{zmj_est}
c^{-1}t_{m(j)} \leq \mathbb{E}_A Z_{m(j)} \leq c t_{m(j)}.
\end{equation}
Recalling that~\(J_0 = \frac{32\theta \log{n}}{p},\) we now choose the constant~\(\theta > c\)  and get that
\[\sum_{j=1}^{n-J_0} t_{m(j)} = \sum_{j=1}^{n-J_0} H\left(\frac{32}{(n-j)p}\right) = \sum_{j=J_0}^{n-1} H\left(\frac{32}{jp}\right) =: \nu_{wt},\] as in Theorem statement. Thus
\begin{equation}\label{exp_zmj_bounds}
c^{-1} \nu_{wt} \leq \mathbb{E}_AZ_{tot} = \sum_{j=1}^{n-J_0} \mathbb{E}_A Z_j \leq c \nu_{wt}
\end{equation}
and plugging this into~(\ref{azumar}), we have
\begin{equation}\label{azumar_two}
\mathbb{P}_A\left(Z_{tot} \geq 2c\nu_{wt}\right) \leq 2 \exp\left(-\frac{D\nu_{wt}^2}{\mu_{wt}}\right)
\end{equation}
for some constant~\(D > 0.\)

Combining~(\ref{azumar_two}) with the estimate~(\ref{a_est}) for the occurrence of the event~\(A,\) we then obtain
\[\mathbb{P}\left(Z_{tot} \geq 2c\nu_{wt}\right) \leq 2\exp\left(-\frac{D\nu_{wt}^2}{\mu_{wt}}\right) + \frac{1}{n^{1+\gamma}}\]
and recalling the relation~(\ref{tj_est_one_ax}), we then obtain
\begin{equation}\nonumber
\mathbb{P}\left(\sum_{j=1}^{n-J_0} R_j \geq 2c\nu_{wt}\right) \leq 2\exp\left(-\frac{D\nu_{wt}^2}{\mu_{wt}}\right) + \frac{1}{n^{1+\gamma}} +  n^2 \cdot \exp\left(-\frac{np}{32}\right).
\end{equation}
As before, we select~\(np \geq M\log{n}\) for a large enough constant~\(M > 0\) so that the final term above is at most~\(\frac{1}{n^{1+\gamma}}.\) The estimate~(\ref{aane_wala}) then obtains the desired deviation bound for the minimum cost~\(\tau_n\) in Theorem statement.

For the expectation upper bound, we define the event \[E_{nice} := \left\{\tau_n \leq    \kappa \nu_{wt} + \frac{\kappa \varphi_n \log{n}}{p}  \right\}\] and get that
\begin{equation}\label{simma}
\mathbb{E}\tau_n \ind(E_{nice}) \leq \kappa \nu_{wt} + \frac{\kappa \varphi_n \log{n}}{p}.
\end{equation}
If~\(E_{nice}\) does not occur, then~\(\tau_n\) is upper bounded by the sum of the weights of all edges in the complete graph~\(K_n.\) Arguing as in the derivation of~(\ref{thmix_ax_2}), we have that
\begin{equation}\label{thmix_ax_3}
\mathbb{E}\tau_n \ind(E_{nice}^c)  \leq D n^2 \cdot \left(\mathbb{P}(E^c_{nice})\right)^{\frac{1}{2}}
\end{equation}
for some constant~\(D > 0.\) Combining this with~(\ref{simma}) and using the estimate~(\ref{mn_up_wt}) for~\(E^c_{nice}\) with~\(\gamma > 0\) arbitrary, we  obtain the desired expectation bound for~\(\tau_n\) in Theorem statement. This completes the proof of the Theorem.~\(\qed\)

\renewcommand{\theequation}{\thesection.\arabic{equation}}
\setcounter{equation}{0}
\section{Proof of Theorem~\ref{thm_spat_mst}}\label{sec_pf_thm_spat_mst}
We begin with the proof of the lower bound, which we establish using Lemma~\ref{lemma_mst_low}. Since~\(p(u,v) = p\) for all edges~\((u,v),\) the connectivity condition~(\ref{p_cond_new}) in the statement of Theorem~\ref{thm_max_cst_low} holds. Consequently, the estimate~(\ref{mst_low_ax}) implies that there are constants~\(\theta_1,\theta_2 > 0\) such that if~\(p \geq \frac{\theta_1 \log{n}}{n},\) then
\[\mathbb{P}\left(\tau_n \geq \theta_2 n J\left(\frac{\theta_2}{np}\right)\right) \geq 1-\theta_1 \cdot p,\]
where~\(J(.)\) is the inverse cost cdf as defined in~(\ref{h_def}). Choosing~\(\theta_1 > 0\) larger and~\(\theta_2 > 0\) smaller if necessary, we also get from the connectivity estimate~(\ref{e_con_est_max}) that if~\(p \geq \frac{\theta_1 \log{n}}{n},\) then
\[\mathbb{P}\left(E_{con}\right) \geq  1-\exp\left(-\theta_2 np\right),\]
where we recall that~\(E_{con}\) is the event that the random graph~\(G\) is connected. Combining via the union bound gives
\begin{equation}\label{semma_dhool}
\mathbb{P}\left(E_{con}\bigcap \left\{ \tau_n \geq \theta_2 n J\left(\frac{\theta_2}{np}\right) \right\}\right) \geq 1-\theta_1 \cdot p - \exp\left(-\theta_2 np\right).
\end{equation}

As a first step towards estimating~\(J(.),\) we bound the edge cost cdf. Indeed,  since the edge cost factor defined in~(\ref{cst_def_mst_spat}) and the edge weight are at most~\(1,\) so is the edge cost. We demonstrate that the cost~\(c(u, v)\) of the edge~\((u, v)\) with endvertices~\(u\) and~\(v\) satisfies
\begin{equation}\label{timmf_2}
\mathbb{P}\left(c(u,v) \leq x\right) \leq C \cdot x^{2/\alpha},
\end{equation}
for some constant~\(C > 0\) and all~\(0 < x  <1.\)

We have
\begin{align}
\mathbb{P}\left(c(u,v) \leq x\right) &= \mathbb{P}\left(\frac{d^{\alpha}(X_u, X_v)}{(\sqrt{2})^{\alpha}} \cdot W(u, v) \leq x \right) \nonumber\\
&= \sum_{k \geq 1}   \mathbb{P}\left(\frac{d^{\alpha}(X_u, X_v)}{(\sqrt{2})^{\alpha}} \cdot W(u, v) \leq x,\; \frac{1}{k+1} < W(u, v) \leq \frac{1}{k} \right) \nonumber\\
&\leq \sum_{k \geq 1}   \mathbb{P}\left(d^{\alpha}(X_u, X_v) \leq (k+1) x (\sqrt{2})^{\alpha},\; \frac{1}{k+1} < W(u, v) \leq \frac{1}{k} \right) \nonumber\\
&= \sum_{k \geq 1}   \mathbb{P}\left(d^{\alpha}(X_u, X_v) \leq (k+1) x (\sqrt{2})^{\alpha}\right) \mathbb{P}\left( \frac{1}{k+1} < W(u, v) \leq \frac{1}{k} \right) \nonumber\\
&\leq \sum_{k \geq 1}   \mathbb{P}\left(d^{\alpha}(X_u, X_v) \leq (k+1) x (\sqrt{2})^{\alpha}\right) F\left(\frac{1}{k}\right). \label{tik_tjk}
\end{align}

From the estimate~(\ref{ec_dst_est_2}) for the Euclidean distance, we know that
\begin{align}
\mathbb{P}\left(d^{\alpha}(X_u, X_v) \leq (k+1) x (\sqrt{2})^{\alpha}\right) &= \mathbb{P}\left(d(X_u, X_v) \leq ((k+1) x)^{1/\alpha} \sqrt{2}\right) \nonumber\\
&\leq C (k+1)^{2/\alpha} x^{2/\alpha}, \nonumber
\end{align} for some constant~\(C > 0\) and all~\(0 < x  <1.\) Therefore, we get from~(\ref{tik_tjk}) that
\begin{align}
\mathbb{P}\left(c(u,v) \leq x\right) &\leq   C x^{2/\alpha} \sum_{k \geq 1} (k+1)^{2/\alpha} F\left(\frac{1}{k}\right)\nonumber\\
&\leq C_1 x^{2/\alpha} \sum_{k \geq 1}k^{2/\alpha} F\left(\frac{1}{k}\right)\nonumber\\
&\leq C_2 x^{2/\alpha}, \label{timmf}
\end{align}
for some constants~\(C_1,C_2 > 0,\) where the final estimate in~(\ref{timmf}) follows from the condition~(\ref{tail_cnd_mst}). This proves~(\ref{timmf_2}).

For~\(0 < z < 1,\) we set~\(x_z := \left(\frac{z}{2(1+C_2)}\right)^{\alpha/2} \in (0,1)\) in~(\ref{timmf_2}) and get that~\[\mathbb{P}\left(c(u,v) \leq x_z\right) \leq \frac{C_2z}{2(1+C_2)} \leq \frac{z}{2}.\] This in turn that the inverse cdf~\(J(.)\) defined in~(\ref{h_def}) must satisfy~\(J(z) \geq x_z.\) Setting~\(z = \frac{\theta_2}{np},\) where~\(\theta_2 > 0\) is the constant in~(\ref{semma_dhool}), obtains the desired  lower bound for~\(\tau_n\) in~(\ref{mst_dev_bds_spat}) and~(\ref{mst_exp_bds_spat}).

For the upper bound for~\(\tau_n,\) we  use a segmentation technique similar to that described in the proof of Theorem~\ref{thm_spat_mast}\((a).\) Tile the unit square~\(S\) into small~\(s_n \times s_n\) squares~\(\{R_i\}_{1 \leq i \leq N}\) as in Figure~\ref{fig_squares}, where~\(s_n = \sqrt{\frac{\zeta_0\log{n}}{np}}\) for some constant~\(\zeta_0 > 0\) to be determined later and~\(N = \frac{1}{s_n^2}.\) Since~\(np \geq M \log{n}\) for constant~\(M >0\) by this Theorem statement, we choose~\(M  = M(\zeta_0)> 0\) large enough, follow the same argument as in~(\ref{s_n_est}) and assume that~\(N\) is an integer.

Let~\(G_i \subset G\) be the subgraph of~\(G\) induced by the vertices present in~\(R_i.\) Below, we demonstrate that with high probability, each~\(G_i\) is connected and then ``stitch" the individual spanning subtrees to create an overall spanning tree of~\(G.\) Recalling that~\(N(R_i)\) is the number of vertices present in~\(R_i\) (see discussion prior to~(\ref{n_i_est_bax}), we get from the estimate~(\ref{n_i_est_bax}) that
\begin{equation}\label{n_i_est_bax_2}
\mathbb{P}\left(E_{vert}(i)\right) \geq 1-\exp\left(-D_1m\right),
\end{equation}
where~\[m := ns_n^2,\;\;\;E_{vert}(i) = \{D_1m \leq N(R_i) \leq D_2 m\}\] and~\(D_1,D_2 > 0\) are constants not depending on the choice of~\(i\) or the constants~\( M,\zeta_0 >0\) described in the previous paragraph.

Assuming~\(E_{vert}(i)\) occurs, we now demonstrate that the subgraph~\(G_i \subset G\) induced by the vertices of~\(R_i,\) is connected with high probability. Indeed, we see that if~\(E_{vert}(i)\) occurs, then there are order of~\(m\) vertices in~\(R_i,\) any two of which are joined by an edge with probability~\(p.\) Since~\(m = ns_n^2 = \frac{\zeta_0\log{n}}{p}\) and~\(np \geq M\log{n}\) by Theorem statement, we choose~\(M > \zeta_0\) larger we have that
\begin{equation}\label{gliksi}
mp = ns_n^2p = \zeta_0\log{n} \geq \zeta_0 \log{m}
\end{equation} and choosing~\(\zeta_0 > 0\) large enough, we see that  the connectivity estimate~(\ref{e_con_est_max}) is then applicable for~\(G_i.\)

Defining~\(E_{con}(i)\) to be the event that~\(G_i\) is connected, we get from~(\ref{e_con_est_max}) that
\begin{equation}\label{gen_flight}
\mathbb{P}\left(E_{con}(i) \mid E_{vert}(i)\right) \geq 1- \exp\left(-D_3 mp\right),
\end{equation} for some constant~\(D_3 > 0,\) again not depending on the choice of~\(i,M\) or~\(\zeta_0.\) Combining this with~(\ref{n_i_est_bax_2}), we get that
\begin{align}
\mathbb{P}\left(E_{con}(i) \cap E_{vert}(i) \right) &\geq \left(1-\exp\left(-D_3 mp\right)\right) \left(1-\exp\left(-D_1 ns_n^2\right)\right) \nonumber\\
&\geq  1-\exp\left(-D_3 mp\right) - \exp\left(-D_1 ns_n^2\right)  \nonumber\\
&= 1- \exp\left(-D_3 \zeta_0\log{n}\right) - \exp\left(-\frac{D_1\zeta_0 \log{n}}{p}\right) \nonumber\\
&\geq 1- 2\exp\left(-D_4 \zeta_0\log{n}\right), \label{thismika}
\end{align}
for some constant~\(D_4 > 0,\) not depending on the choice of~\(i,M\) or~\(\zeta_0.\)

Given~\(\gamma > 0,\) we choose~\(\zeta_0 > 1\) larger if necessary and get from~(\ref{thismika}) that
\[\mathbb{P}\left(E_{con}(i) \cap E_{vert}(i)\right) \geq  1 - \frac{1}{n^{\gamma+2}},\]
for each~\(1 \leq i \leq N.\) Letting~\[F_{con} := \bigcap_{i=1}^{N} E_{con}(i) \cap E_{vert}(i),\] we get by an application of the union bound that
\begin{equation}\label{f_tt_est}
\mathbb{P}(F_{con}) \geq 1 - \frac{N}{n^{\gamma+2}} \geq 1 - \frac{1}{n^{1+\gamma}},
\end{equation}
since
\begin{equation}\label{N_est}
N = \frac{1}{s_n^2} = \frac{np}{\zeta_0 \log{n}} \leq n,
\end{equation} by our choice of~\(\zeta_0 > 1.\)

The occurrence of the event~\(F_{con}\) ensures that each~\(G_i\) contains a spanning tree~\({\cal T}_i\) formed by the vertices located in~\(R_i.\) Our final ingredient establishes the existence of ``cross" edges that facilitate ``stitching" of these individual trees~\(\{{\cal T}_i\}.\) Suppose~\(F_{tot}\) occurs and for~\(1 \leq i \leq N-1,\) let~\(E_{cross}(i)\) be the event that there is no edge of~\(G\) having one endvertex in~\(R_i\) and the other endvertex in~\(R_{i+1}.\) Given that~\(F_{con}\) occurs, both~\(R_i\) and~\(R_{i+1}\) contain at least~\(D_1m = D_1ns_n^2 = \frac{D_1\zeta_0 \log{n}}{p}\) vertices each and so
\begin{align}
\mathbb{P}(E^c_{cross}(i) \mid F_{con}) &\leq (1-p)^{(D_1m)^2} \nonumber\\
&\leq \exp\left(-p D_1^2m^2\right) \nonumber\\
&= \exp\left(-\frac{D_1^2\zeta_0^2 (\log{n})^2}{p}\right) \nonumber\\
&\leq \exp\left(-D_1^2\zeta_0^2 (\log{n})^2\right). \nonumber
\end{align}

Letting \[F_{cross} := \bigcap_{i=1}^{N-1} E_{cross}(i),\] we again apply the union bound and get that
\begin{align}
\mathbb{P}(F_{cross} \mid F_{con}) &\geq 1- N \cdot \exp\left(-D_1^2\zeta_0^2 (\log{n})^2\right) \nonumber\\
&\geq 1- n \cdot \exp\left(-D_1^2\zeta_0^2 (\log{n})^2\right)  \nonumber
\end{align}
using the estimate~(\ref{N_est}). Combining this with the estimate~(\ref{f_tt_est}) gives
\begin{align}
\mathbb{P}(F_{cross} \cap F_{con}) &\geq 1-\frac{1}{n^{1+\gamma}} -n \cdot \exp\left(-D_1^2\zeta_0^2 (\log{n})^2\right) \nonumber\\
&\geq 1-\frac{2}{n^{1+\gamma}}, \label{glimska}
\end{align}
for all~\(n\) large.

Suppose~\(F_{cross} \cap F_{con}\) occurs so that each~\(G_i, 1 \leq i \leq N = \frac{1}{s_n^2},\) is connected and contains order of~\(m  = ns_n^2\) vertices. Let~\({\cal T}_i\) be any spanning tree of~\(G_i\) and let~\(h_i, 1 \leq i \leq N-1\) be any edge of~\(G\) having one endvertex in the square~\(R_i\) and the other endvertex in~\(R_{i+1},\) whose existence is guaranteed by the event~\(F_{cross}.\) The union \[{\cal T}_{stit} := \bigcup_{i=1}^{N} {\cal T}_i \bigcup \bigcup_{i=1}^{N-1} \{h_i\}\] is a spanning tree of~\(G,\) each of whose edge has length at most~\(4s_n.\) Since the edge weights are~\(\leq 1\) by this Theorem statement, we get that the minimum cost of a spanning tree of~\(G\) is at most~\((n-1)(4s_n)^{\alpha}\) and so
\begin{equation}\label{tell_tale}
\tau_n \ind(F_{cross} \cap F_{con}) \leq (n-1) (4s_n)^{\alpha} \leq C n \cdot \left(\frac{\log{n}}{np}\right)^{\alpha/2}
\end{equation} for some constant~\(C > 0.\)

Using~(\ref{tell_tale}) and  the estimate~(\ref{glimska}) with~\(\gamma =1,\) we get that
\[\mathbb{P}\left(\tau_n \leq C n \cdot \left(\frac{\log{n}}{np}\right)^{\alpha/2}\right) \geq 1- \frac{2}{n^2} \geq 1-p,\] since~\(p\) is at least of the order of~\(\frac{\log{n}}{n},\) by this Theorem statement. This obtains the desired upper deviation bound for~\(\tau_n\) and therefore completes the proof of~(\ref{mst_dev_bds_spat}). Following a similar analysis as in the derivation of~(\ref{mn_up_wt_exp}), we also obtain the expectation  upper  bound for~\(\tau_n\) in~(\ref{mst_exp_bds_spat})  and this completes the proof of the Theorem.~\(\qed\)

\renewcommand{\theequation}{\thesection.\arabic{equation}}
\setcounter{equation}{0}
\section{Proof of Corollaries~\ref{cor_mst_repeat} and~\ref{cor_example_two_mst}}\label{sec_pf_cor_mst}

\emph{Proof of Corollary~\ref{cor_mst_repeat}}: We begin by verifying the conditions~\((I)-(II)\) in the statement of Theorem~\ref{thm_min_cst_weak}.  The connectivity condition~(\ref{p_cond_new}) in the statement of Theorem~\ref{thm_max_cst_low} is trivially true, for example, with~\(a_0 = \frac{1}{4}, b_0 = 1\) and~\(\gamma_0 = \frac{3}{4}.\)

For any~\(x \in \{0,1\},\) we see that~\(r(x,X_1) \in \{1,h_n\}\) and if~\(r(x,X_1) =h_n,\) then definitely~\(X_1 = 1.\) Thus \[\mathbb{E}r^2(x,X_1) \leq 1 + h_n^2 g_n\] is bounded since~\(\limsup h_n^2g_n < \infty,\) by Corollary statement. Since the edge weights are bounded, we see that condition~\((I)\) is satisfied. Finally, we have that the cost~\(c(u,v) = r(X_u,X_v) \cdot W(u,v)\) of any edge~\(h = (u,v)\) is at least as large as its weight~\(W(u,v),\) by the definition of the cost factor in~(\ref{cost_def_obi}) and so for any~\(x > 0\) we have that \[F^{(ct)}(x) = \mathbb{P}(c(h) \leq x) \leq  \mathbb{P}(W(h) \leq x) = F(x).\] This implies that the domination condition~(\ref{sandwich_cond}) also holds and so the conditions~\((I)-(II)\) in the statement of Theorem~\ref{thm_min_cst_weak} are satisfied.

Since the edge weights are uniform in~\([0,1],\) we see that~\(F(x) = x, 0 < x < 1\) and so~\(H(z) = z\) for~\(0 < z < 1.\) Thus
\begin{equation}\label{thilsa}
\varphi_n = H\left(\frac{\lambda \log{n}}{np}\right) = \frac{\lambda \log{n}}{np}
\end{equation}
and moreover~\(\varphi_n\) is at least of the order of~\(\frac{\log{n}}{n},\) since the edge probability~\(p = \frac{1}{n^{\beta}}, 0 <\beta < 1,\) by Corollary statement.  We therefore set~\(\gamma = 3\) in Theorem~\ref{thm_min_cst_weak} so that~\(\varphi_n^2\) is much larger than~\(\frac{1}{n^{1+\gamma}}.\) The deviation bounds in~(\ref{mn_comp_bounds}) then imply~(\ref{disco}). Moreover, from the expectation bounds in~(\ref{mst_var_bonda_ax}) we see that
\[\frac{\delta_1}{p} \leq \mathbb{E}\tau_n \leq \frac{\delta_2 \log{n}}{p} + \frac{1}{n^{1+\gamma}} \leq \frac{2\delta_2 \log{n}}{p}.\] This obtains the expectation bounds in~(\ref{trimsa}).

Finally, the variance bound in~(\ref{mst_var_bonda_ax}) and our choice of~\(\gamma = 3,\) implies that there is a constant~\(\lambda_1 > 0\) such that
\[var(\tau_n) \leq \lambda_1 n^2p \varphi_n^2 + \frac{1}{n^{1+\gamma}} \leq 2\lambda_1 n^2p \varphi_n^2.\] Recalling from~(\ref{thilsa})that~\(\varphi_n  = H\left(\frac{\lambda \log{n}}{np}\right) = \frac{\lambda \log{n}}{np},\) we get that~\( var(\tau_n) \leq D \frac{(\log{n})^2}{p} \) for some constant~\(D >0.\) Consequently, the expectation lower bound in~(\ref{trimsa}) implies that
\[\mathbb{E}\left(\frac{\tau_n}{\mathbb{E}\tau_n}-1\right)^2 \leq D_1 p \cdot (\log{n})^2  = \frac{D_1 (\log{n})^2}{n^{\beta}} \longrightarrow 0,\] for some constant~\(D_1 > 0.\) This completes the proof of the Corollary.~\(\qed\)

\emph{Proof of Corollary~\ref{cor_example_two_mst}\((a)\)}: As in the proof of Corollary~\ref{cor_mst_repeat} above, we begin by verifying the conditions~\((I)-(II)\) in the statement of Theorem~\ref{thm_min_cst_weak}. As above,   we see that the connectivity condition~(\ref{p_cond_new}) in the statement of Theorem~\ref{thm_max_cst_low} is true with~\(a_0 = \frac{1}{4}, b_0 = 1\) and~\(\gamma_0 = \frac{3}{4}.\)
Moreover, the Euclidean distance between any two points in the unit square~\(S\) is at most~\(\sqrt{2}\) and so the edge cost factor~\(r(x,y) \leq  1\) for all~\(x,y \in S.\) Thus condition~\((I)\) in Theorem~\ref{thm_min_cst_weak} is true.

We now verify that the domination relation~(\ref{sandwich_cond}) holds under slightly general conditions.  Specifically, we show that if the edge weights are at most~\(1\) and
\begin{equation}\label{mc_sims_ax}
\sum_{k \geq 1} \frac{1}{k^{2/\alpha}} F\left((k+1)x\right) \leq DF(x)
\end{equation}
for some constant~\(D > 0\) and all~\(0 < x < 1,\) then~(\ref{sandwich_cond}) holds for some constants~\(c_1,c_2 > 0.\)

Indeed, since the edge cost factor~\(r(x, y) \leq 1\) (see discussion following~(\ref{cst_def_mst_spat})), the cost~\(c(u,v)\) of the edge~\((u,v)\) with endvertices~\(u\) and~\(v,\) is at most~\(1\) and for~\(0 < x < 1\) we have that
\begin{align}
\mathbb{P}\left(c(u,v ) \leq x\right) &= \mathbb{P}\left(r(X_u, X_v) \cdot W(u, v) \leq x\right) \nonumber\\
&= \sum_{k \geq 1}  \mathbb{P}\left(r(X_u, X_v) \cdot W(u, v) \leq x, \frac{1}{k+1} < r(X_u, X_v) \leq \frac{1}{k}\right) \nonumber\\
&\leq  \sum_{k \geq 1}  \mathbb{P}\left(W(u, v) \leq (k+1)x, \frac{1}{k+1} < r(X_u, X_v) \leq \frac{1}{k}\right)\nonumber\\
&= \sum_{k \geq 1}  F((k+1)x) \mathbb{P}\left(\frac{1}{k+1} < r(X_u, X_v) \leq \frac{1}{k}\right)\nonumber\\
&\leq  \sum_{k \geq 1}  F((k+1)x) \mathbb{P}\left(r(X_u, X_v) \leq \frac{1}{k}\right). \label{cv_ax}
\end{align}
Recalling that~\(r(X_u, X_v)  = d^{\alpha}(X_u,X_v),\) we then use the estimate~(\ref{ec_dst_est_2}) to get that
\[\mathbb{P}\left(r(X_u, X_v) \leq \frac{1}{k}\right) = \mathbb{P}\left(d(X_u, X_v) \leq \frac{1}{k^{1/\alpha}}\right) \leq \frac{C}{k^{2/\alpha}}\] for some constant~\(C> 0.\) Plugging this into~(\ref{cv_ax}) and using~(\ref{mc_sims_ax}) gives that the edge cost cdf~\(F^{(ct)}(x) \leq CD F(x)\) for all~\(0 < x < 1.\) Choosing~\(C > D^{-1}\) larger if necessary, this extends to all~\(x > 0,\) since~\(F^{(ct)}(1) = F(1) = 1,\) as discussed in the beginning of the paragraph. Thus the domination relation~(\ref{sandwich_cond}) in Theorem~\ref{thm_min_cst_strong} holds.

To verify~(\ref{mc_sims_ax}), we use~\(F(y) \leq y^{1/\delta}\) for \emph{all}~\(y > 0\) and get for~\(0 < x< 1\) that
\begin{align}
\sum_{k \geq 1} \frac{1}{k^{2/\alpha}} F\left((k+1)x\right) &\leq x^{1/\delta}  \sum_{k \geq 1} \frac{(k+1)^{1/\delta}}{k^{2/\alpha}} \nonumber\\
&\leq D_1 x^{1/\delta} \sum_{k \geq 1} \frac{1}{k^{2/\alpha - 1/\delta}} \nonumber\\
&\leq D_2x^{1/\delta},\label{elesia}
\end{align}
for some constants~\(D_1,D_2 > 0,\) where the final estimate in~(\ref{elesia}) is true since~\(\alpha < \frac{2\delta}{1+\delta},\) by Corollary statement. This implies that~(\ref{mc_sims_ax}) and therefore~(\ref{sandwich_cond}) are true. Consequently the conditions~\((I)-(II)\) in Theorem~\ref{thm_min_cst_weak} are satisfied.

From the definition of the inverse cdf in~(\ref{h_def}) we get that~\[H(z) = z^{\delta},\;\;\;0 < z < 1\] and so the terms~\(\zeta_n,\varphi_n\)  in~(\ref{var_phi_ax}) evaluate to~\[\zeta_n = \frac{\lambda_1}{(np)^{\delta}}\;\;\text{ and }\;\;\varphi_n = \lambda_2 \left(\frac{\log{n}}{np}\right)^{\delta},\] for some constants~\(\lambda_1,\lambda_2 > 0.\) Since~\(np = n^{1-\beta}\) and~\(0 < \delta < 1,\) by Corollary statement, we get that~\(\frac{1}{(np)^{2\delta}}\) is much larger than~\(\frac{1}{n^{1+\gamma}}\) for any~\(\gamma > 1.\) Fixing such a~\(\gamma,\) the bounds in~(\ref{mn_comp_bounds}) and~(\ref{mst_var_bonda_ax}) then imply that there are constants~\(\gamma_1,\gamma_2 > 0\) such that
\begin{equation}\label{skilpf}
\mathbb{P}\left(E_{con} \bigcap \left\{\frac{\gamma_1 n}{(np)^{\delta}} \leq \tau_n \leq \gamma_2 n\left(\frac{\log{n}}{np}\right)^{\delta} \right\} \right) \geq 1- \gamma_2p =  1-o(1),
\end{equation}
\begin{equation}\label{skilpg}
\frac{\gamma_1 n}{(np)^{\delta}} \leq \mathbb{E}\tau_n \leq \gamma_2 n\left(\frac{\log{n}}{np}\right)^{\delta} \text{ and } var(\tau_n) \leq \gamma_2 n^2p \left(\frac{\log{n}}{np}\right)^{2\delta}.
\end{equation}

The lower bounds in~(\ref{skilpf}) and~(\ref{skilpg}) directly imply the lower bounds in~(\ref{skilp_fa}) and~(\ref{skilp_ga}) in Corollary statement. Moreover, combining the variance upper bound and the expectation lower bound in~(\ref{skilpg}) we get that
\[\mathbb{E}\left(\frac{\tau_n}{\mathbb{E}\tau_n} -1\right)^2 \leq \gamma_2 n^2p \left(\frac{\log{n}}{np}\right)^{2\delta} \cdot \frac{(np)^{2\delta}}{\gamma_1^2n^2} = \frac{\gamma_2}{\gamma_1^2} \cdot p \cdot (\log{n})^{2\delta} \rightarrow 0,\] since~\(p = \frac{1}{n^{\beta}}.\) This obtains the~\(L^2-\)convergence of~\(\tau_n,\) scaled and centred.

The upper bounds in~(\ref{skilpf}) and~(\ref{skilpg}) have an extra logarithmic factor compared to~(\ref{skilp_fa}) and~(\ref{skilp_ga}). In the rest of the proof, we remove this extra factor by appealing to Theorem~\ref{thm_min_cst_strong}. Clearly condition~\((A)\) in the statement of   Theorem~\ref{thm_min_cst_strong} is satisfied with~\(B = 1.\) Also, since~\(F(x) = x^{1/\delta}\) for~\(0 < x < 1,\) we set~\(x_0 = \frac{1}{2}\) and see that if~\( k \geq 2\) and~\(0< x < \frac{1}{2k}\) then \[F(kx) = (kx)^{1/\delta} = k^{1/\delta} \cdot x^{1/\delta} = k^{1/\delta} F(x) \geq D(\log{k})^2 F(x)\] for some constant~\(D > 0,\) not depending on the choice of~\(k.\) Thus condition~\((B)\) in Theorem~\ref{thm_min_cst_strong} holds as well.

To apply the upper bounds for~\(\tau_n\) derived in Theorem~\ref{thm_min_cst_strong}, we let~\(\gamma > 1\) be a constant as above and  let~\(\kappa = \kappa(\gamma) > 0\) be the constant appearing in the terms~\(\nu_{wt}\) and~\(\mu_{wt}\) defined in~(\ref{nu_wt_def}).  Since~\(F(x) = x^{1/\delta}\) for~\(0 < x < 1,\) we get that~\(H(z) = z^{\delta}\) for~\(0 < z < 1\) and so setting~\(J_0 :=  \frac{\kappa \log{n}}{p},\) we have that
\begin{equation}\label{nu_exp_ax}
\nu_{wt} = \sum_{j = J_0}^{n-1} \frac{1}{(jp)^{\delta}} = \frac{1}{p^{\delta}} \sum_{j=J_0}^{n-1} \frac{1}{j^{\delta}}.
\end{equation}
Comparing with integrals, we see that \[\sum_{j=J_0}^{n-1} \frac{1}{j^{\delta}} \leq \int_{J_0}^{n-1} \frac{dx}{(x-1)^{\delta}} = \frac{(n-2)^{1-\delta} -(J_0-1)^{1-\delta}}{1-\delta} \leq \frac{n^{1-\delta}}{1-\delta},\] since~\(0 < \delta < 1.\) Substituting this into~(\ref{nu_exp_ax}), we get that
\begin{equation}\label{nu_up}
\nu_{wt} \leq \frac{n^{1-\delta}}{p^{\delta}(1-\delta)} =  \frac{1}{1-\delta} \cdot \frac{n}{(np)^{\delta}}.
\end{equation}

Similarly
\[\sum_{j=J_0}^{n-1} \frac{1}{j^{\delta}} \geq \int_{J_0}^{n-1} \frac{dx}{(x+1)^{\delta}}  =  \frac{n^{1-\delta} - (J_0+1)^{1-\delta}}{1-\delta} \geq \frac{n^{1-\delta}}{2(1-\delta)},\] for all~\(n\) large, since~\(J_0 = \frac{\kappa \log{n}}{p} = \theta n^{\beta} \log{n}\) is much smaller than~\(n.\) Again plugging this into~(\ref{nu_exp_ax}) and combining with~(\ref{nu_up}), we get that
\begin{equation}\label{nu_bounds}
\frac{1}{2(1-\delta)} \cdot \frac{n}{(np)^{\delta}} \leq \nu_{wt} \leq \frac{1}{1-\delta} \cdot \frac{n}{(np)^{\delta}}.
\end{equation}

Similarly, arguing as above, we see that if~\(2\delta < 1,\) then \[\mu_{wt} = \sum_{j=J_0}^{n-1} \left(\frac{\log{n}}{jp}\right)^{2\delta} = \left(\frac{\log{n}}{p}\right)^{2\delta} \sum_{j=J_0}^{n-1} \frac{1}{j^{2\delta}} = O\left(\frac{n (\log{n})^{2\delta}}{(np)^{2\delta}}\right).\] On the other hand, if~\(2\delta \geq 1,\) then
\[\mu_{wt} \leq  \left(\frac{\log{n}}{p}\right)^{2\delta}\sum_{j=J_0}^{n-1}\frac{1}{j} = O\left(\log{n} \left(\frac{\log{n}}{p}\right)^{2\delta}\right).\] Together with the bounds for~\(\nu_{wt}\) in~(\ref{nu_bounds}), we deduce that if~\(2\delta < 1\) then~\(T := \frac{\nu_{wt}^2}{\mu_{wt}}\) is at least of the order of~\(\frac{n}{(\log{n})^{2\delta}}.\) If~\(2\delta \geq 1,\) then~\(T\) is at least  of the order of~\(\frac{n^{2-2\delta}}{(\log{n})^{2\delta}}.\)

In either of the above cases,~\(T\) is at least~\(n^{b}\) for some~\(0 < b < 1\) and so recalling that~\(\gamma > 1\) and~\(\kappa = \kappa(\gamma) > 0\) are constants, the bounds in~(\ref{mn_up_wt}) and~(\ref{mn_up_wt_exp}) imply \[ \mathbb{P}\left(E_{con} \bigcap \left\{ \tau_n \leq \kappa \nu_{wt} + \frac{\kappa \varphi_n \log{n}}{p}   \right\}\right) \geq 1- \frac{2}{n^{1+\gamma}} \]
and
\[\mathbb{E}\tau_n \leq \kappa \nu_{wt} + \frac{\kappa \varphi_n \log{n}}{p} + \frac{2}{n^{1+\gamma}},\] for all~\(n\) large, where~\(\varphi_n = \varphi_n(\kappa)\) is as defined in~(\ref{var_phi_ax}). From~(\ref{nu_bounds}), we already know that~\(\nu_{wt}\) is  of the order of~\(\frac{n}{(np)^{\delta}} \rightarrow \infty\) since~\(p = \frac{1}{n^{\beta}}, 0 < \beta < 1\) and so to complete the proof of Corollary~\ref{cor_example_two_mst}\((a),\) it suffices to demonstrate that~\(\frac{\varphi_n \log{n}}{p}\) is at most of the  order of~\(\frac{n}{(np)^{\delta}}.\)  But this is true since, by definition,~\[\varphi_n = H\left(\frac{\kappa \log{n}}{p}\right) = \left(\frac{\kappa \log{n}}{p}\right)^{\delta}\] is much less than~\(\frac{n}{(np)^{\delta}} = \frac{n^{1-\delta}}{p^{\delta}},\) for all~\(n\) large.

This obtains the desired upper bounds for~\(\tau_n\) and therefore completes the proof of Corollary~\ref{cor_example_two_mst}\((a).\)~\(\qed\)

\emph{Proof of Corollary~\ref{cor_example_two_mst}}\((b)\): Since the edge weights are~\(\leq 1\) a.s., it suffices to verify that the edge weight cdf~\(F\) satisfies~(\ref{tail_cnd_mst}). Indeed, since~\(\alpha > \frac{2\delta}{1-\delta},\) we have that~\(\frac{1}{\delta} - \frac{2}{\alpha} > 1\) strictly and so
\[\sum_{k \geq 1} k^{2/\alpha} F\left(\frac{1}{k}\right) = \sum_{k \geq 1} k^{2/\alpha} \cdot \frac{1}{k^{1/\delta}} < \infty\] and so~(\ref{tail_cnd_mst}) is true. Consequently, Theorem~\ref{thm_spat_mst} obtains the desired deviation and expectation bounds for~\(\tau_n\) in the  Corollary statement.

For the~\(L^2-\)convergence, we use the variance bound derived in~(\ref{skilpg}). This is applicable since the edge weight and the edge cost factor are both bounded. Moreover, from above we have that~\(\mathbb{E}\tau_n \geq D n \cdot (np)^{\alpha/2}\) for some constant~\(D > 0\) and so, we get from~(\ref{skilpg}) that \[\mathbb{E}\left(\frac{\tau_n}{\mathbb{E}\tau_n} -1\right)^2 \leq D_1 p (\log{n})^{2\delta} \cdot (np)^{\alpha-2\delta},\]
for some constant~\(D_1 > 0.\) Since~\(p = \frac{1}{n^{\beta}},\) the  term~\(p (\log{n})^{2\delta} \cdot (np)^{\alpha-2\delta}\) is~\(o(1)\) if~\(\beta > \frac{\alpha-2\delta}{1+\alpha-2\delta}.\) This obtains the~\(L^2-\)convergence of~\(\tau_n,\) scaled and centred and therefore completes the proof of the Corollary.~\(\qed\)

\subsection*{\em Data Availability Statement}
Data sharing not applicable to this article as no datasets were generated or analysed during the current study.

\subsection*{\em Acknowledgement}
I thank Professors Rahul Roy, Federico Camia and C. R. Subramanian for crucial comments that led to an improvement of the paper. I also thank IMSc, IISER Bhopal and University of Bristol for my fellowships.

\subsection*{\em Conflict of Interest and Funding Statement}
I certify that there is no actual or potential conflict of interest in relation to this article. No funding or assistance was received in preparation of this manuscript.

\bibliographystyle{plain}

\end{document}